\newtheorem{prop}{Proposition}[section]
\newtheorem{thm}[prop]{Theorem}
\newtheorem*{thm*}{Theorem}
\newtheorem{lem}[prop]{Lemma}
\newtheorem{cor}[prop]{Corollary}
\newtheorem{rem}[prop]{Remark}
\theoremstyle{definition}
\newtheorem{exam}[prop]{Example}
\newcommand{\dst}{\displaystyle}
\newcommand{\NN}{\mathbb{N}}
\newcommand{\RR}{\mathbb{R}}
\newcommand{\CC}{\mathbb{C}}
\newcommand{\C}{\mathbb{C}}
\newcommand{\Z}{\mathbb{Z}}
\newcommand{\cS }{{\mathcal{S}}}
\newcommand{\cA}{{\mathcal{A}}}
\newcommand{\cK}{{\mathcal{K}}}
\newcommand{\wl}{\widetilde{\Lambda}}
\newcommand{\wf}{\widetilde{F}}
\newcommand{\ahc}{{\mathcal{F}_{\varphi}^\infty}}
\newcommand{\ahd}{{\mathcal{F}_{\varphi}^2}}
\newcommand{\ahp}{{\mathcal{F}_{\varphi}^p}}
\newcommand{\Card}{\operatorname{Card}}
\newcommand{\dist}{\operatorname{dist}}
\newcommand{\Hol}{\mathrm{Hol}}
\newcommand{\dl}{\delta_{nm}}
\DeclareMathOperator{\K}{\Bbbk}
\DeclareMathOperator{\bk}{\textbf{k}}
\renewcommand{\phi}{\varphi}
\newcommand{\nc}{\newcommand}
\nc{\Log}{\operatorname{Log}}
\nc{\eps}{\varepsilon}
\nc{\lra}{\longrightarrow}
\begin{document}
\title[\sf Sampling, interpolation and Riesz Bases in Small Fock spaces]{Sampling, interpolation and Riesz bases in  small  Fock spaces}
\author[Baranov, Dumont, Hartmann, Kellay]{A. Baranov, A. Dumont, A. Hartmann, K. Kellay}
\address{A. Baranov\\ Department of Mathematics and Mechanics\\ St. Petersburg State University\\
St. Petersburg\\ Russia}
\email{anton.d.baranov@gmail.com}
\address{A. Dumont\\ LATP\\CMI\\Aix--Marseille Universit\'e\\
39, rue F. Joliot-Curie\\13453 Marseille\\France}
\email{dumont@cmi.univ-mrs.fr}

\address{A. Hartmann \& K. Kellay \\IMB\\Universit\'e Bordeaux I\\
351 cours de la Liberation\\33405 Talence \\France}
\email{Andreas.Hartmann@math.u-bordeaux1.fr}
\email{kkellay@math.u-bordeaux1.fr}


\keywords{Sampling, interpolation, Riesz bases, small Fock spaces,
de Branges spaces}
\subjclass[2000]{Primary 30H05; Secondary 30D10, 30E05}
\thanks{The first author was supported by the Chebyshev Laboratory (St. Petersburg State University) under RF Government grant 11.G34.31.0026 and by JSC Gazprom Neft, by Dmitry Zimin's Dynasty Foundation and by RFBR grant14-01-00748. This work was partially supported by ANR-09-BLAN-0058-02}
\begin{abstract} { We give a complete description of Riesz bases
of reproducing kernels in small Fock spaces. This characterization is in the spirit of the 
well known Kadets--Ingham 1/4 theorem for Paley--Wiener spaces.
Contrarily to the situation in Paley--Wiener spaces, a link can be established
between Riesz bases in the Hilbert case and corresponding complete interpolating sequences
in small Fock spaces with associated uniform norm.
These results allow to show that if a sequence has a density stricly different from
the critical one  then either it can be completed or reduced to a complete interpolating 
sequence.
In particular, this allows to give necessary and sufficient conditions for
interpolation or sampling in terms of densities.}
\end{abstract}

\maketitle


\section{Introduction and main results.}\label{section1}

Interpolation and sampling problems are well studied objects.
Complete results for corresponding sequences are known for broad classes
of spaces of analytic functions. We refer the reader to the monograph by Seip for
an account on these problems \cite{S}. 
Two prominent examples here are the Fock spaces and the Bergman spaces.
For these, interpolating and sampling sequences have been studied by
Seip in the classical situation. 
More general weights have been discussed in the 1990s by Berndtsson and Ortega-Cerd\`a \cite{BO},
Lyubarskii and Seip \cite{LS}, and later Marco, Massaneda and Ortega-Cerd\`a \cite{MMO}. 
More recently, a series of results was obtained for 
"small"\, (i.e., with slowly growing weights) versions of 
these spaces. In this case the geometric properties 
of sampling/interpolating sequences change significantly. Seip showed that 
in small Bergman spaces, locally,
interpolating sequences look like interpolating sequences in Hardy
spaces \cite{S0}. In small Fock spaces, Borichev and
Lyubarskii recently exhibited Riesz bases of reproducing kernels 
\cite{BL}. In this paper we will investigate
further the situation of small Fock spaces $\ahp$ for  the weight
$\varphi(z) = \alpha(\log^+ r)^2$.
We focus on the Hilbert situation $p=2$ and on the case $p = \infty$. 
As it turns out, no density characterization can be expected 
for interpolation or sampling. There are actually sequences
which are simultaneously interpolating 
and sampling, also called {\it complete interpolating sequences}.
Note that complete interpolating sequences necessarily have  critical density
(as defined below). We also provide sequences with critical density which are neither
interpolating nor sampling for $p=2,\infty$. 

The central result of this paper is a characterization of complete interpolating
sequences when $p=2$ which is in the spirit of the famous $1/4$ Kadets--Ingham theorem 
in the Paley--Wiener space and its more general version due to 
Avdonin \cite{avd}. A different characterization using in particular a Muckenhoupt type 
condition and based on different techniques is
discussed by Belov, Mengestie and Seip for more general spaces in \cite{BMS}.
The novelty of our approach is the use of a rather elementary Hilbert space result,
namely Bari's Theorem, adapted to our situation.
Moreover, we will show  that, surprisingly, the same characterization 
applies to complete interpolating sequences in $\ahc$, the $L^\infty$-counterpart 
of the Fock space $\ahd$. 
From our characterization, we will also deduce sufficient density
conditions for interpolation and sampling.

We also would like to emphasize the connection between these
spaces and the de Branges spaces \cite{br}. The complete interpolating
sequence introduced by Borichev and Lyubarskii in \cite{BL}
defines a generating function $G$ which, when $p=2$, allows to identify 
the Fock spaces we are interested in with the de Branges space $\mathcal{H}(G)$. 
Consequently, the measure $dx/|G(x)|^2$ is a sampling measure for our Fock spaces, which
answers a question raised in \cite{M}. Note that in \cite{MNO}, the authors
consider sampling and interpolation in the class of de Branges spaces for
which the phase function defines a doubling measure. Our space corresponds
to the situation when the phase function is locally but not globally doubling  
so that their results apparently do not apply here. Still it can be observed that these
authors obtain a similar kind of density characterization as ours when $p=2$
(at least for real sequences they consider).


\subsection{Definition of small Fock spaces}
We now introduce the necessary notation.
Let $\varphi(z) = \alpha(\log^+|z|)^2$, which is 
a subharmonic radial function
with $\varphi(r)\nearrow +\infty$,  $r\to +\infty$,
and define the associated Fock space,
$$
\ahd=\big\{f\in \Hol (\C)\text{ : } \|f\|_{\varphi ,2}^{2}:=\int_\C|f(z)|^2e^{-2 
\varphi(z)}dm(z) <\infty\big\}.
$$
In \cite{BL} Borichev and Lyubarski have shown the existence of 
{\it complete interpolating} sequences in $\ahd$ (i.e., simultaneously 
interpolating and sampling for $\ahd$,
see precise definitions below).

The sequence they introduced will be the reference sequence for our considerations: 
\begin{equation}
\label{bls}
 \Gamma=\Gamma_\alpha=\{e^{\frac{n+1}{2\alpha}}e^{i\theta_n}\}_{n\ge 0},\qquad 
 \theta_n\in \RR.
\end{equation}

In order to define sampling and interpolating sequences for $\ahd$, 
we consider first $\bk_z$, the reproducing kernel of $\ahd$:
$$
\langle f, \bk_z\rangle_{\ahd}=f(z),\qquad f\in\ahd,\quad z\in\mathbb C.
$$
According to \cite[Lemma 2.7]{BL}, the kernel admits the following estimate: 
\begin{equation}
\label{kernel-estimate}
 \|\bk_z\|^2_{\varphi,2}=\bk_z(z) \asymp \frac{e^{2\varphi(z)}}{1+|z|^2},\qquad z\in \C.
\end{equation}

The sequence $\Lambda\subset \C$  is called {\it sampling} for $\ahd$ if
\[
 \|f\|_{\varphi,2}^{2}\asymp \|f\|_{\varphi,2,\Lambda}^{2}:=\sum_{\lambda\in  
 \Lambda}\frac{|f(\lambda)|^2}{\bk_\lambda(\lambda)}, 
 \quad f\in\ahd,
\]
and {\it interpolating} 
if for every $v=(v_\lambda)_{\lambda\in \Lambda}\in \ell^2_{\varphi,\Lambda}$ , i.e., 
 $\|v\|_{\varphi,2,\Lambda}<\infty$, there exists $f\in \ahd$ such that
$$ 
   v=f|_\Lambda.
$$
Let $\K_\lambda=\bk_\lambda/\|\bk_\lambda\|_{\varphi,2}$ 
be the normalized reproducing kernel at $\lambda$. Let $\Lambda\subset \CC$. 
We say that $\{\K_\lambda\}_{\lambda\in \Lambda}$ is a {\it Riesz sequence} in $\ahd$ if 
for some $C>0$ and  for each finite sequence $\{a_\lambda\}$, we have
$$
 \frac{1}{C}\sum_{\lambda\in \Lambda}|a_\lambda|^2\leq \big\| 
 \sum_{\lambda\in \Lambda} a_\lambda\K_\lambda \big\|_{2,\varphi}^{2}
 \leq C\sum_{\lambda\in \Lambda}|a_\lambda|^2,
$$
and a {\it Riesz basis} if it is also complete. It is well known that $\Lambda$
is interpolating if and only if $\{\K_\lambda\}_{\lambda\in \Lambda}$ is a Riesz
sequence, and $\Lambda$ is complete interpolating if and only if 
$\{\K_\lambda\}_{\lambda\in \Lambda}$ is a Riesz basis in $\ahd$.

It should be mentioned that our case $\varphi(r) = \alpha (\log^+ r)^2$
corresponds to the critical growth of the weight for which Riesz bases of reproducing kernels exist. 
Recall that, by the results of Seip and Seip--Wallst\'en \cite{S1,S2} 
there are no complete interpolating sequences
for the classical Fock space (i.e., for $\phi(r)=r^2$). 
For more general (in particular, rapidly growing) weights the same 
was shown in \cite{BDK,MMO,LS} (see also \cite{S} as a general source). Some examples 
of slowly growing weights such that no complete interpolating sequences exist
were given in \cite{yul}. Finally, Borichev and Lyubarskii
\cite[Theorem 2.5]{BL} have shown that, under some regularity conditions,
if $(\log^+ r)^2\ll \varphi(r)\ll r^2$, then the corresponding  Fock space 
does not possess a Riesz basis of reproducing kernels. 
We mention that there is no known weight for Bergman spaces for which there
are Riesz bases of reproducing kernels.


\subsection{Description of complete interpolating sequences in $\ahd$}
Our central result is a characterization 
of complete interpolating sequences
in terms of their deviation from the sequence $\Gamma=\Gamma_\alpha$ 
defined in \eqref{bls}.

Before stating the theorem we need to define separation as in \cite{MMO,BL}.
Set
$$
\rho(z)=(\Delta \varphi(z))^{-1/2},
$$
where $\Delta \varphi(r)=\varphi''(r)+{\varphi'(r)}/{r}$, $r>0$. 
We associate with $\rho$ a ``distance'' (a semi-metric):
$$d_\rho(z,w)=\frac{|z-w|}{1+\min(\rho(z),\rho(w))},\qquad z,w\in \C.$$
Note that when $z,w$ are in a fixed disk, this distance is comparable
to Euclidean distance.

The sequence $\Lambda\subset \C$ is said to be $d_\rho$-separated if there is 
$d_\Lambda>0$ such that
\begin{eqnarray*}
 \inf \{d_\rho(z,w)\text{ : } z,w\in \Lambda\text{, } z\neq w\}\ge d_\Lambda.
\end{eqnarray*}

In the specific situation $\varphi(r)=\alpha (\log^+ r)^2$, we have $\rho(r)=  r/\sqrt{2\alpha}$,
$r\ge 1$.
Hence
$$
d_\rho(z,w)={\sqrt{2\alpha}} \frac{|z-w|}{\sqrt{2\alpha}+\min(|z|,|w|)}.
$$ 
In particular, for $\beta<1$ and $0\neq \lambda\in\C$ the ball 
corresponding to this distance is given by
\[
 D_{\rho}(\lambda,\beta):=\{z\in \C:d_{\rho}(z,\lambda)<\beta\}.
\]
When $\beta$ is small, $D_{\rho}(\lambda,\beta)$ is comparable to
a Euclidean disk $D(\lambda,q|\lambda|)$ with a suitable constant $q$ depending
on $\beta$.
From this 
we deduce that $\Lambda$ is $d_{\rho}$-separated
if and only if there
exists $c>0$ such that the Euclidean disks $D(\lambda,c|\lambda|)$,
$\lambda\in\Lambda$, are disjoint.

   \begin{thm}\label{BaseRiesz}  
Let $\alpha>0$, $\varphi(r)=\alpha(\log^+ r)^2$,   
let $\Gamma=\{\gamma_n\}_{n\geq 0}=\{e^\frac{n+1}{2\alpha}\}_{n\geq 0}$ and
let $\Lambda=\{\lambda_n\}_{n\ge 0}$ 
with $\lambda_n=\gamma_ne^{\delta_n} e^{i\theta_n}$, 
$|\lambda_n|\leq |\lambda_{n+1}|$, 
$\theta_n\in \RR$. Then $\{\K_\lambda\}_{\lambda\in \Lambda}$ is a Riesz basis
for $\ahd$ if and only if the following three conditions hold.
   \begin{enumerate}
   \item[(a)] $\Lambda$ is $d_\rho$-separated,
   \item[(b)] $(\delta_n)\in \ell^\infty$,
   \item[(c)] there exists $N\geq 1$ and $\delta>0$ such that
   $$\sup_n\frac{1}{N}\Big|\sum_{k=n+1}^{n+N}\delta_k\Big|\leq \delta<\frac{1}{4\alpha}.$$
   \end{enumerate}
   \end{thm}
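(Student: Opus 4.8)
The plan is to reduce the Riesz-basis question to a perturbation problem centered on the Borichev--Lyubarskii sequence $\Gamma=\Gamma_\alpha$, and then to invoke Bari's theorem. The starting point is that $\{\K_{\gamma_n}\}_{n\ge0}$ is already known to be a Riesz basis for $\ahd$ (this is \cite{BL}, whose sequence \eqref{bls} is precisely $\Gamma$ with some choice of arguments $\theta_n$), and the generating function $G$ attached to $\Gamma$ identifies $\ahd$ with a de Branges space $\mathcal{H}(G)$. In that model, reproducing kernels become (up to normalization) functions of the form $G(z)/(z-\lambda)$, and Riesz bases of such kernels are governed by the behaviour of $|G|$ on the real axis, much as in the Paley--Wiener situation where $G(z)=\sin\pi z$ and one recovers Kadets--Ingham. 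So the first step is to write the candidate system $\{\K_{\lambda_n}\}$ in the de Branges model and compare it term by term with the reference Riesz basis $\{\K_{\gamma_n}\}$.

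The second step is the ``only if'' direction, which is the softer one. Separation (a) is necessary for any Riesz sequence of normalized reproducing kernels (a non-separated pair forces the off-diagonal Gram entry to tend to $1$), and is already built into the setup via the characterization of $d_\rho$-separation by disjoint Euclidean disks $D(\lambda,c|\lambda|)$. For (b) and (c): if $(\delta_n)$ were unbounded, or if the averaged sums $\frac1N\big|\sum_{k=n+1}^{n+N}\delta_k\big|$ were to reach or exceed $\tfrac1{4\alpha}$ along a subsequence for every $N$, one produces a function in $\ahd$ (or a functional) witnessing either a failure of the lower sampling bound or a failure of interpolation, by a counting/blaschke-product argument: the quantity $\sum_{k\le n}\delta_k$ plays the role of a ``phase defect'' and the threshold $\tfrac1{4\alpha}$ is exactly the point at which the associated de Branges generating function for $\Lambda$ leaves the Hilbert--Schmidt comparison class relative to $G$. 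Concretely I would build the generating function $G_\Lambda$ of $\Lambda$, show $\log|G_\Lambda/G|$ is controlled on $\mathbb R$ precisely by these averaged partial sums of $(\delta_n)$, and read off the necessity of all three conditions from the de Branges / Bari criterion.

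The third and main step is the ``if'' direction. Assuming (a)--(c), I want to show $\{\K_{\lambda_n}\}$ is a Riesz basis by comparing it with $\{\K_{\gamma_n}\}$ through Bari's theorem: it suffices to exhibit a bounded invertible operator taking one system to the other, or equivalently to show the two systems are quadratically close after an appropriate unitary/diagonal adjustment and that the candidate system is complete and minimal. The key analytic estimate is a pointwise comparison of reproducing kernels: using \eqref{kernel-estimate} and the explicit $\rho(r)=r/\sqrt{2\alpha}$, one controls $\|\K_{\lambda_n}-\K_{\gamma_n}\|$ in terms of $|\delta_n|$ and $|\theta_n-(\text{reference angle})|$; but since $(\delta_n)$ is only bounded, not small, a direct term-by-term Bari perturbation cannot work, and this is where the strict inequality $\delta<\tfrac1{4\alpha}$ in (c) enters. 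The right device is to absorb the ``bulk'' of the perturbation into a multiplier: the de Branges generating function $G_\Lambda$ of $\Lambda$ differs from $G$ by an outer-type factor whose modulus on $\mathbb R$ is comparable to $|x|^{c\sum\delta}$-type growth with exponent strictly below the critical one, so multiplication by $G/G_\Lambda$ is a bounded invertible map between the two model spaces; what remains after this correction is a genuinely small $\ell^2$ perturbation to which Bari applies.

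The hard part, which I would expect to occupy the technical core of the paper, is proving that the averaged-sum condition (c) with the sharp constant $\tfrac1{4\alpha}$ is exactly what makes the multiplier $G/G_\Lambda$ bounded and bounded below on $\mathcal H(G)$ — i.e. translating the combinatorial hypothesis on $(\delta_n)$ into an $A_2$-type / de Branges admissibility statement for the weight $|G_\Lambda/G|^{\pm2}$ on the line. Establishing the estimate $\|\bk_z\|$-asymptotics off the real axis, handling the arguments $\theta_n$ (which are essentially free, reflecting that in this setting only moduli matter up to the separation constraint), and checking completeness of the perturbed system (via the generating function having the right zero set and growth, hence lying in the space with dense multiples) are the remaining, more routine, ingredients. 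I would structure the write-up as: (i) the de Branges identification and properties of $G$; (ii) the generating function $G_\Lambda$ and the equivalence of (c) with boundedness of the comparison multiplier; (iii) Bari's theorem applied to the corrected system for the ``if'' direction; (iv) the necessity argument for the ``only if'' direction.
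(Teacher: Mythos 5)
Your overall frame---compare $\{\K_{\lambda_n}\}$ with the reference system $\{\K_{\gamma_n}\}$, use the generating function of $\Lambda$, and conclude via Bari's theorem---is the paper's strategy in spirit, but the mechanism you propose for the sufficiency direction does not work as stated, and the step you defer is precisely the core of the proof. Under (a)--(c) the ratio of generating functions satisfies $|F(z)/G(z)|\asymp \exp\bigl(-\sum_{k=0}^{n-1}\delta_k\bigr)\,\dist(z,\Lambda)/\dist(z,\Gamma)$ on $|z|=e^t$, and condition (c) only controls the partial sums by $2\alpha\delta t+O(1)$ with $2\alpha\delta<1/2$; hence $|F/G|$ may grow or decay like $|z|^{\pm 2\alpha\delta}$, so multiplication by $G/F$ is neither bounded nor bounded below in any pointwise or weighted sense, there is no ``residual quadratically small perturbation'' left to feed into Bari, and the claim that (c) is equivalent to boundedness and invertibility of this multiplier is not correct. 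What (c) actually buys is boundedness on $\ell^2$ of the transition matrix $A_{n,m}=\langle g_{\lambda_n},\K_{\gamma_m}\rangle$, where $g_\lambda=F\,\|\bk_\lambda\|_{\varphi,2}/\bigl(F'(\lambda)(\cdot-\lambda)\bigr)$ is the biorthogonal family: the paper computes $|A_{n,m}|\asymp \frac{\dist(\gamma_m,\Lambda)}{\gamma_m}\exp\bigl(-\frac{|m-n|}{4\alpha}+\sum_{k=m}^{n-1}\delta_k+O(1)\bigr)$, which decays like $e^{-\delta'|m-n|}$ exactly when the averaged sums stay strictly below $\frac1{4\alpha}$; this Schur-type estimate, together with the growth estimate \eqref{psi-estim} needed both for $F/(\cdot-\lambda)\in\ahd$ and for completeness, is the technical heart, and your write-up explicitly postpones it (``the hard part'') without indicating how it would be done. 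Recasting it as an $A_2$/admissibility statement for $|G_\Lambda/G|^{\pm2}$ is essentially the Belov--Mengestie--Seip route, which the paper deliberately avoids and which, as noted in Remark \ref{bsm}, does not formally cover this weight.

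The necessity direction has the same problem in sharper form. Necessity of (a) is fine, but for (b) and (c) a counting/Blaschke or ``phase defect'' heuristic cannot detect the threshold: sequences violating (c) still have critical density $2\alpha$ and can even be uniqueness sets (compare the example $\lambda_n=e^{\frac{n+1}{2a}+\delta}$, where all three regimes $\delta<\frac1{4a}$, $\delta=\frac1{4a}$, $\delta>\frac1{4a}$ have identical densities). In the paper, unboundedness of $(\delta_n)$ is shown to force $|A_{n_k,m_k}|\to\infty$, i.e.\ the biorthogonal functions have unbounded norms, so the system is not uniformly minimal; and the failure of (c) requires a genuinely delicate two-case analysis (according to whether $N\eps_N$ is unbounded or bounded, $\eps_N$ measuring the excess over $\frac1{4\alpha}$): in the boundary case no single entry blows up, but one row of $[A_{n,m}]$ acquires a growing number of entries of size $\asymp 1$, which is what destroys boundedness. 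None of this is visible from the argument you sketch, so both directions remain unproved as written.
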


For $N=1$, condition (c) of Theorem \ref{BaseRiesz} becomes $\sup_n|\delta_n| <1/4$,
which resembles a well-known stability 
result for complete interpolating sequences in the Paley--Wiener space 
(equivalently, Riesz bases of exponentials) -- the famous 1/4--Theorem
due to Ingham and Kadets (see, e.g., \cite{HNP, N}). Also, 
for arbitrary $N$, Avdonin considers the sufficiency of that condition 
in the Paley--Wiener space \cite{avd}. However, there is an essential 
difference since complete interpolating sequences in the Paley--Wiener 
space can not be described in terms of perturbations and more subtle 
characteristics (e.g., the Muckenhoupt condition) appear \cite{HNP}, 
while in the case of spaces of very slow growth such a characterization 
turns out to be possible. 
As already mentioned, very close results were obtained by Belov, Mengestie 
and Seip in \cite{BMS} where the boundedness and invertibility problem of 
a discrete Hilbert transform on lacunary sequences was solved. 
Though Theorem \ref{BaseRiesz} is not formally covered by the results stated in 
\cite{BMS}
(see Remark  \ref{bsm}), 
it seems that one can obtain our characterization  
using the methods of that paper. However, as mentioned earlier, 
our proof being based on Bari's theorem is essentially elementary. 

As an immediate consequence of Theorem \ref{BaseRiesz}, the reader should note that 
if a sequence $\{\lambda_n\}$ is complete interpolating
then any sequence $\{\lambda_n e^{i\theta_n}\}$ will be complete interpolating
(as is the case for $\Gamma_{\alpha}$).


\subsection{Weighted Fock spaces with uniform norm}
We also deal with the case $p=\infty$.
The corresponding weighted Fock space is defined by
$$
 \ahc=\big\{f\in \Hol (\C)\text{ : } \|f\|_{\varphi ,\infty}
 :=\sup_{z\in \C}|f(z)|e^{- \varphi(z)}<\infty\big\}.
$$
A sequence $\Lambda\subset \C$ is called {\it sampling} for $\ahc$,  
if there exists  $L>0$ such that
$$
\|f\|_{\varphi,\infty} \leq L \|f\|_{\varphi,\infty,\Lambda}:=
 L\sup_{\lambda\in \Lambda}|f(\lambda)|e^{- \varphi(\lambda)} , \qquad f\in\ahc.
$$

A sequence $\Lambda$ is called {\it interpolating} for $\ahc$ if for every sequence
$v=(v_\lambda)_{\lambda\in \Lambda}$ in $\ell^{\infty}_{\varphi,\Lambda}$,
i.e., such that $\|v\|_{\varphi,\infty,\Lambda}<\infty$,
there is a function $f\in \ahc$ such that $v=f|_\Lambda$.

It turns out that there exist complete interpolating sequences for $\ahc$
(i.e., as in $\ahd$, simultaneously interpolating and sampling sequences for
$\ahc$), 
which differs in this respect from most other known spaces of entire functions (e.g., 
complete interpolating sequences exist for the Paley--Wiener space $PW^p_a$ 
for $1<p<\infty$ \cite{ls1}, but not for $p=\infty$).

Our second main result is as follows:

\begin{thm} 
\label{infin}
 Let $\varphi(r)=\alpha(\log^+ r)^2$,  $\alpha>0$. Then
 a sequence $\Lambda$ is a complete interpolating sequence 
 for $\ahc$ if and only if for some \textup(any\textup) $\lambda\in \Lambda$ the sequence
 $\Lambda \setminus \{\lambda\}$ is a complete interpolating sequence 
 for $\ahd$.
\end{thm}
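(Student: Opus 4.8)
The plan is to route everything through \emph{generating functions}. To a complete interpolating sequence $M=\{\mu_k\}$ for $\ahd$ or for $\ahc$ — which is necessarily $d_\rho$-separated, so that the $|\mu_k|$ grow geometrically and $\sum_k|\mu_k|^{-1}<\infty$ — we attach the canonical product $G_M(z)=\prod_k(1-z/\mu_k)$, an entire function with zero set exactly $M$. The first step, carrying the real content, is a sharp two-sided pointwise description of $G_M$; writing $d_\rho(z,M):=\inf_{\mu\in M}d_\rho(z,\mu)$ for the distance from $z$ to $M$, the claim is that, after a suitable normalization,
\[
 |G_M(z)|\asymp e^{\varphi(z)}\,(1+|z|)^{\kappa}\,d_\rho(z,M),\qquad z\in\C,
\]
with $\kappa=\kappa_2$ when $M$ is complete interpolating for $\ahd$ and $\kappa=\kappa_\infty$ when $M$ is complete interpolating for $\ahc$, where $\kappa_\infty=\kappa_2+1$ (one checks $\kappa_2=-\tfrac12$); in particular $|G_M'(\mu_k)|\asymp e^{\varphi(\mu_k)}(1+|\mu_k|)^{\kappa-1}$. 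For $\ahd$ this is essentially the de Branges identification $\ahd=\mathcal{H}(G_M)$ mentioned in the introduction, combined with the Riesz-basis reformulation of complete interpolation and the fact that the biorthogonal system of $\{\K_{\mu_k}\}$ is $\{c_k\,G_M/(\,\cdot\,-\mu_k)\}$; for $\ahc$ it is the analogue, which is where the genuine work lies. We also record the elementary embedding $\ahd\hookrightarrow\ahc$: by \eqref{kernel-estimate}, $|f(z)|e^{-\varphi(z)}\le\|f\|_{\varphi,2}\,\|\bk_z\|_{\varphi,2}\,e^{-\varphi(z)}\lesssim\|f\|_{\varphi,2}\,(1+|z|)^{-1}$.

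Granting the two descriptions, the theorem is a short bridge, the point being that $\kappa_2$ and $\kappa_\infty$ differ by the exponent of one linear factor. Fix $\lambda\in\Lambda$. If $\Lambda\setminus\{\lambda\}$ is complete interpolating for $\ahd$ with generating function $\wg:=G_{\Lambda\setminus\{\lambda\}}$, set $G(z):=(z-\lambda)\wg(z)$; it has zero set exactly $\Lambda$, and since $|z-\lambda|\asymp 1+|z|$ outside a fixed disk about $\lambda$, while inside that disk every weight factor is $\asymp 1$ and both sides of the $\ahc$-estimate reduce to the distance from $z$ to the nearest point of $\Lambda$, the function $G$ satisfies the $\ahc$-estimate ($\kappa=\kappa_\infty$); by the sufficiency half of the $\ahc$-description, $\Lambda$ is complete interpolating for $\ahc$. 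Conversely, if $\Lambda$ is complete interpolating for $\ahc$ with generating function $G$, then for \emph{every} $\lambda\in\Lambda$ the entire function $\wg(z):=G(z)/(z-\lambda)$ has zero set $\Lambda\setminus\{\lambda\}$ and, by the same computation read backwards, satisfies the $\ahd$-estimate ($\kappa=\kappa_2$); by the sufficiency half of the $\ahd$-description, $\Lambda\setminus\{\lambda\}$ is complete interpolating for $\ahd$. Thus the two properties are equivalent, and since the second implication holds for every $\lambda\in\Lambda$, ``for some'' may be read as ``for any''.

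The crux, and the expected main obstacle, is the $\ahc$-version of this description of $G_M$, both halves of it, since $\ahc$ is not a Hilbert space and the arguments behind Theorem \ref{BaseRiesz} do not apply. For \emph{sufficiency} (the estimate on $G$ forces $\Lambda$ to be complete interpolating for $\ahc$) one runs the standard generating-function calculus for weighted Fock spaces: for data $v=(v_n)$ with $\|v\|_{\varphi,\infty,\Lambda}<\infty$, the candidate interpolant is $f(z)=\sum_n v_n\,G(z)\big/\big(G'(\lambda_n)(z-\lambda_n)\big)$; the normalization gives $|v_n/G'(\lambda_n)|\lesssim\|v\|_{\varphi,\infty,\Lambda}(1+|\lambda_n|)^{1-\kappa_\infty}$, and $d_\rho$-separation together with the geometric growth of the $|\lambda_n|$ makes the series converge with $\|f\|_{\varphi,\infty}\lesssim\|v\|_{\varphi,\infty,\Lambda}$, while sampling for $\ahc$ follows from the lower bound on $|G|$. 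For \emph{necessity} one derives the two-sided estimate on $G$ from the interpolation and sampling hypotheses: crude bounds on $|G|$ from the counting function of $\Lambda$ (controlled by sampling and separation) are refined to the sharp estimate, the essential local input being the extremal interpolants $f_n\in\ahc$ solving the $\ahc$-interpolation problem with data $(\delta_{nk}e^{\varphi(\lambda_k)})_k$; these have $\|f_n\|_{\varphi,\infty}\asymp 1$, $f_n(\lambda_n)=e^{\varphi(\lambda_n)}$, $f_n|_{\Lambda\setminus\{\lambda_n\}}=0$, hence are scalar multiples of $G/(\,\cdot\,-\lambda_n)$, pinning down $|G'(\lambda_n)|$ and, after propagation off $\Lambda$, the lower bound on $|G|$. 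A more pedestrian alternative avoids $G$: prove the $\ahc$-counterpart of Theorem \ref{BaseRiesz} directly and observe that deleting a point of $\Lambda$ and reindexing replaces the perturbation parameters $(\delta_n)$ by $(\delta_{n+1}+\tfrac1{2\alpha})$ on a tail, so condition (c) for $\Lambda\setminus\{\lambda\}$ in $\ahd$ becomes exactly the shifted mean condition $\big|\tfrac1N\sum_{k=n+1}^{n+N}\delta_k+\tfrac1{2\alpha}\big|\le\delta<\tfrac1{4\alpha}$ for $\Lambda$ in $\ahc$; but that requires redoing the whole argument of Theorem \ref{BaseRiesz} for $p=\infty$, whereas the generating-function bridge isolates the single new ingredient.
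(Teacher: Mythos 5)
Your bridge rests on a claimed characterization that is false: it is not true that every complete interpolating sequence $M$ for $\ahd$ (or $\ahc$) has generating function satisfying a sharp two\-/sided bound $|G_M(z)|\asymp e^{\varphi(z)}(1+|z|)^{\kappa}d_\rho(z,M)$ with a \emph{fixed} exponent $\kappa$. Take $\lambda_n=e^{\frac{n+1}{2\alpha}+\delta}$ with a constant $0<\delta<\frac{1}{4\alpha}$: conditions (a)--(c) of Theorem \ref{BaseRiesz} hold (the averaged sums equal $\delta$), so this sequence is complete interpolating for $\ahd$, yet by the computation \eqref{estimLogF}--\eqref{psi-estim} (made explicit in \eqref{bbb1}) its generating function behaves like $e^{\varphi(z)}\dist(z,\Lambda)(1+|z|)^{-3/2-2\alpha\delta}$, not like the exponent $3/2$ your $\kappa_2=-\tfrac12$ requires; adjoining one point gives the analogous counterexample for $\ahc$. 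What \emph{is} true for a complete interpolating sequence is only the windowed estimate \eqref{psi-estim}, with exponents anywhere in an open interval of length one (and in general no well-defined exponent at all, since the partial sums $\sum_{k<m}\delta_k$ may drift, e.g.\ like $\sqrt m$). Conversely, the windowed estimate is \emph{not} sufficient: choose $\delta_k=\frac{1}{3\alpha}$ on long blocks $[m_j,m_j+L_j)$ followed by $-\frac{1}{3\alpha}$ on the next $L_j$ indices, with $L_j=\sqrt{m_j}\to\infty$; the partial sums are $o(m)$, so the generating function satisfies \eqref{psi-estim}, but condition (c) fails for every $N$, so the sequence is not complete interpolating. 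Thus condition (c) — a statement about local averages of the perturbations — is simply not visible from a pointwise two\-/sided bound of the proposed shape, so both uses of the ``necessity half'' in your bridge (from $\ahd$ to get the estimate on $\wg$, and from $\ahc$ to get it on $G$) break down, and the ``sufficiency half'' cannot be salvaged by weakening to the windowed bound.

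This is exactly the obstacle the paper's proof is organized around: its forward direction does use the product $\wf=(1-\cdot/\tilde\lambda)F$ and the windowed bound for uniqueness and for membership of $\wf/(\cdot-\lambda)$ in $\ahc$, but the boundedness of the interpolation operator is \emph{not} extracted from $|\wf|$ alone; it is reduced, via sampling of $\widetilde\Gamma$ for $\ahc$, to the matrix entries $B_{n,m}\asymp A_{n,m}$ of Section \ref{section7}, whose off-diagonal decay \eqref{matrix} encodes condition (c). Likewise the converse direction reruns the necessity analysis of Theorem \ref{BaseRiesz} on the $B_{n,m}$, i.e.\ it is essentially your ``pedestrian alternative,'' which you set aside. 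To repair your argument you would either have to replace the sharp generating-function description by a genuine (necessarily non-pointwise) characterization, or carry out the $\ell^\infty$ analogue of the matrix estimates — at which point you have reproduced the paper's proof rather than shortcut it.
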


Thus, any complete interpolating sequence for $\ahc$ is a small (in the sence of 
conditions (a)--(c) of Theorem \ref{BaseRiesz}) perturbation 
of the sequence $\widetilde {\Gamma} = \Gamma \cup\{1\}$.
\\

We derive from Theorems \ref{BaseRiesz} and \ref{infin}
several density conditions for interpolation
and sampling in $\ahp$, $p=2,\infty$.
Before stating these results, we need some more notation.

Let $\cA(r,R)$ be the annulus centered at the origin with inner and outer radii $r$ and $R$: $\cA(r,R):=\{z\in \C\text{ : } r\le |z|<R\}$. For a $d_{\rho}$-separated
sequence $\Lambda$ we define the lower and upper densities respectively by
$$D^{-}({\Lambda})=\liminf_{R\to +\infty}\liminf_{r\to+\infty}\frac{\Card ({\Lambda}\cap \cA(r,Rr))}{\log R}$$
and
$$
D^{+}({\Lambda})=\limsup_{R\to +\infty}\limsup_{r\to+\infty}
\frac{\Card ({\Lambda}\cap \cA(r,Rr))}{\log R}.
$$

These densities do not change when we remove or add a finite number
of points to $\Lambda$. 

\subsection{Density criteria for sampling and interpolation}

As an application of our result on Riesz bases Theorem \ref{BaseRiesz} (as well as Theorem
\ref{infin}) we can show that
each set with $D^+(\Lambda)<2\alpha$ (respectively $D^-(\Lambda)>2\alpha$) can be 
completed (reduced) to a complete interpolating sequence. More precisely: 

\begin{thm}\label{denscomp}
 Let $\varphi(r) = \alpha(\log^+r)^2$, let $p=2, \infty$, and let $\Lambda$ be 
 a $d_\rho$-separated sequence. Then

 {\rm (i)} if $D^+(\Lambda) < 2\alpha$, then $\Lambda$ is a subset of some complete 
 interpolating sequence in $\ahp$;

 {\rm (ii)} if  $D^-(\Lambda) > 2\alpha$, then $\Lambda$ contains a complete 
 interpolating sequence in $\ahp$. 
\end{thm}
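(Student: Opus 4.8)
The plan is to deduce both statements from Theorems~\ref{BaseRiesz} and~\ref{infin}, the case $p=2$ carrying all the content, and I first reduce to it. Suppose we have completed the given $d_\rho$-separated sequence $\Lambda$ to a complete interpolating sequence $\Lambda'$ for $\ahd$ (case (i)). Then for any $z_0\notin\Lambda'$ the sequence $\Lambda'\cup\{z_0\}$ is complete interpolating for $\ahc$: by Theorem~\ref{infin} this holds because $(\Lambda'\cup\{z_0\})\setminus\{z_0\}=\Lambda'$ is complete interpolating for $\ahd$, and $\Lambda\subset\Lambda'\subset\Lambda'\cup\{z_0\}$. For (ii), suppose we have extracted from $\Lambda$ a complete interpolating sequence $\Lambda''\subset\Lambda$ for $\ahd$. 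A complete interpolating sequence has critical density $2\alpha$, and densities increase with the sequence, so $D^-(\Lambda'')=2\alpha<D^-(\Lambda)$ forces $\Lambda''\subsetneq\Lambda$; picking $z_0\in\Lambda\setminus\Lambda''$, Theorem~\ref{infin} shows $\Lambda''\cup\{z_0\}\subset\Lambda$ is complete interpolating for $\ahc$. Hence it suffices to prove (i) and (ii) for $p=2$.

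The key is a counting‑function reformulation. Fix a $d_\rho$-separated $\Lambda=\{\lambda_n\}_{n\ge 0}$ enumerated with $|\lambda_n|\le|\lambda_{n+1}|$, write $a_n=\log|\lambda_n|=\frac{n+1}{2\alpha}+\delta_n$ as in Theorem~\ref{BaseRiesz}, and set $N_\Lambda(t)=\Card\{n:|\lambda_n|\le e^t\}$. An elementary comparison shows that condition (b) is equivalent to the boundedness of $N_\Lambda(t)-2\alpha t$ (which already forces a bounded number of $\lambda_n$ of any prescribed modulus). Assuming (b), a summation by parts — based on the identity
\[
\sum_{k=n+1}^{n+N}\delta_k=-\frac{N}{4\alpha}-\alpha\bigl(\delta_{n+N}^2-\delta_n^2\bigr)-\int_{a_n}^{a_{n+N}}\bigl(N_\Lambda(t)-2\alpha t\bigr)\,dt ,
\]
itself a consequence of $a_n=\frac{n+1}{2\alpha}+\delta_n$ — shows that condition (c) holds, for $N$ large enough, as soon as the averages $\frac{1}{|I|}\int_I\bigl(N_\Lambda(t)-2\alpha t\bigr)\,dt$ over long intervals $I$ far from the origin are bounded away from $-1$ and from $0$ (for $\Gamma$ itself this average is identically $-\tfrac12$). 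Finally, $D^+(\Lambda)<2\alpha$ yields, after tiling annuli of a fixed large ratio and iterating, constants $\varepsilon>0$ and $C$ with $N_\Lambda(t)\le(2\alpha-\varepsilon)t+C$ for $t$ large, and symmetrically $D^-(\Lambda)>2\alpha$ gives $N_\Lambda(t)\ge(2\alpha+\varepsilon)t-C$.

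For (i) I would then build $\Lambda'\supset\Lambda$ by adding points so that $N_{\Lambda'}$ tracks $2\alpha t$: one prescribes, level by level, a staircase for $N_{\Lambda'}$ of average slope $2\alpha$, equal to $N_\Lambda$ at the moduli of $\Lambda$ and filled in by new points in between; since $N_\Lambda(t)\le(2\alpha-\varepsilon)t+C<2\alpha t-1$ for $t$ large, strict subcriticality leaves the required room. This makes $N_{\Lambda'}(t)-2\alpha t$ bounded (hence (b)), with long‑interval averages near $-\tfrac12$, provided the positive excursions of $N_{\Lambda'}(t)-2\alpha t$ forced by the (boundedly large) clumps of $\Lambda$ are compensated by withholding new points symmetrically before and after each clump — the mechanism already visible in the model case $\Lambda=\{\pm e^{(3n+1)/(2\alpha)}\}_{n\ge 0}$. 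By the reformulation, $\Lambda'$ then satisfies (a)--(c) and is a complete interpolating sequence for $\ahd$ containing $\Lambda$. Case (ii) is dual: $N_\Lambda(t)\ge(2\alpha+\varepsilon)t-C$ guarantees that points of $\Lambda$ are always available to realise a staircase of average slope $2\alpha$ hugging $2\alpha t$, and the supercritical slack lets one keep a \emph{non}-constant number of points from consecutive clumps so that the $\delta$-averages cancel — as in keeping $1,3,1,3,\dots$ points from the clumps of $\{e^{(2n+1)/(2\alpha)}\zeta:\zeta^3=1\}_{n\ge0}$ — producing a complete interpolating $\Lambda''\subset\Lambda$. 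In (i) the arguments of the inserted points are chosen to preserve $d_\rho$-separation, possible because each annulus $\cA(r,2r)$ contains only boundedly many points; in (ii) separation is inherited by any subsequence.

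The main obstacle is condition (c), and precisely the strictness in it. Boundedness of $N(t)-2\alpha t$, i.e. condition (b), is soft; the difficulty is to keep the long‑interval averages of $N(t)-2\alpha t$ strictly inside $(-1,0)$ rather than on its boundary, since the ``critically shifted'' sequences — for instance $\delta_n\equiv-\tfrac1{4\alpha}$, or the fully clumped critical sequences — are $d_\rho$-separated of exactly critical density yet just fail (c), their averages sitting precisely at $0$ or $-1$. The strict inequalities $D^+(\Lambda)<2\alpha$ and $D^-(\Lambda)>2\alpha$ are exactly what supply the positive‑density room needed both to reach critical density \emph{and} to steer the averaged deviation off the boundary; arranging the insertion (respectively selection) so that the clumps of $\Lambda$ do not drag the average to $0$ or to $-1$ is the delicate point, the rest being bookkeeping with the counting function and with the arguments of the points.
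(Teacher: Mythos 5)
Your framing is sound, and two of your preliminary steps are correct. The reduction of the case $p=\infty$ to $p=2$ via Theorem \ref{infin} (complete $\Lambda$ to $\Lambda'$ for $\ahd$ and add one more point; extract $\Lambda''\subsetneq\Lambda$ for $\ahd$, strictness following from the fact that complete interpolating sequences have density exactly $2\alpha$, and put back one point of $\Lambda\setminus\Lambda''$) is exactly how the two cases are related, and the paper treats $p=\infty$ as "completely analogous" in the same spirit. Your counting-function identity is also correct -- an Abel summation with $a_n=\frac{n+1}{2\alpha}+\delta_n$ does give $\sum_{k=n+1}^{n+N}\delta_k=-\frac{N}{4\alpha}-\alpha(\delta_{n+N}^2-\delta_n^2)-\int_{a_n}^{a_{n+N}}(N_\Lambda(t)-2\alpha t)\,dt$ -- so conditions (b)--(c) of Theorem \ref{BaseRiesz} are indeed equivalent to $N_\Lambda(t)-2\alpha t$ being bounded with long-interval averages uniformly inside $(-1,0)$; this reformulation is not in the paper and is a clean way to state the target. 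Likewise $D^+(\Lambda)<2\alpha$ does give $N_\Lambda(t)\le(2\alpha-\varepsilon)t+C$ and $D^-(\Lambda)>2\alpha$ gives the reverse bound.

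The gap is that the construction itself -- the actual content of the theorem -- is only asserted. "Withholding new points symmetrically before and after each clump'' (case (i)) and "keeping a non-constant number of points from consecutive clumps'' (case (ii)) are not arguments: you must deal with stretches of length comparable to the window $\log R$ on which the local density of $\Lambda$ is at or above $2\alpha$ (there is no room to withhold inside them, and the excursion they force has size growing with $R$), with adjacent clumps whose compensation intervals collide, with the separation constraint on where new points may be placed, and in case (ii) with the fact that you may only use moduli where $\Lambda$ actually has points; and you must prove that the resulting averages are \emph{uniformly} bounded away from $0$ and $-1$, not merely "near $-\tfrac12$''. This is precisely where the paper puts its work: it tiles the plane by annuli $A_m$ of fixed logarithmic width $M$, each containing at most $M-1$ points of $\Lambda$ (resp.\ at least $M+1$ in case (ii)) and an empty sub-annulus $B_m$ guaranteeing $d_\rho$-separation of the inserted points, groups $N$ consecutive annuli into blocks, and then applies Seip's discrete intermediate-value device: the two extreme placements of the free points (all in the lowest, resp.\ highest, $B_m$ of a block) give block sums of the $\delta_n$ of opposite signs and size of order $MN^2$, while moving a single point to a neighbouring annulus changes the block sum by at most $2M$; hence some intermediate configuration has every block sum $O(M)$, and taking the window length in condition (c) much larger than $M$ yields the strict bound $\delta<\frac{1}{4\alpha}$. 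Some balancing mechanism of this kind (or a detailed version of your compensation scheme covering the degenerate configurations above) is indispensable; without it the proposal identifies the right target but does not prove the theorem.
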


This result, together with a classical comparison method by Ramanathan-Steger \cite{MMO,OS}, 
allows us to deduce our density results.

\begin{thm} \textup(Sampling, $p=\infty$\textup)
 \label{thm1} 
 Let $\varphi(r)=\alpha(\log^+ r)^2$,  $\alpha>0$. Then 

 \begin{itemize}
 \item[(i)] every $d_\rho$-separated sequence $\Lambda$ 
 with $D^{-}({\Lambda})> 2\alpha$,  
 is a set of  sampling for $\ahc$; 
 
 \item[(ii)] if the sequence $\Lambda$ is a set of sampling for $\ahc$ then it contains a $d_{\rho}$-separated subsequence
 $\widetilde{\Lambda}$  with $D^{-}(\widetilde{\Lambda})\geq 2\alpha$.
 \end{itemize}
\end{thm}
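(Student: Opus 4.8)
The plan is to deduce part (i) directly from Theorem~\ref{denscomp} and to prove part (ii) by combining a thinning argument with the Ramanathan--Steger comparison method. For part (i): if $\Lambda$ is $d_\rho$-separated with $D^{-}(\Lambda)>2\alpha$, then Theorem~\ref{denscomp}(ii) with $p=\infty$ provides a complete interpolating sequence $\Lambda_0\subset\Lambda$ for $\ahc$, which in particular is a set of sampling. Since the sampling norm $\|f\|_{\varphi,\infty,\Lambda}=\sup_{\lambda\in\Lambda}|f(\lambda)|e^{-\varphi(\lambda)}$ is monotone with respect to $\Lambda$, we have $\|f\|_{\varphi,\infty,\Lambda_0}\leq\|f\|_{\varphi,\infty,\Lambda}$ for every $f\in\ahc$, so the sampling inequality valid for $\Lambda_0$ is a fortiori valid for $\Lambda$; hence $\Lambda$ is a set of sampling for $\ahc$.

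For part (ii) I would first pass to a $d_\rho$-separated subsequence. The key local fact is that for $f\in\ahc$ the function $z\mapsto|f(z)|e^{-\varphi(z)}$ is Lipschitz in the $d_\rho$-metric relative to $\|f\|_{\varphi,\infty}$: there is an absolute constant $C$ with
\[
  \bigl|\,|f(z)|e^{-\varphi(z)}-|f(w)|e^{-\varphi(w)}\,\bigr|\leq C\,d_\rho(z,w)\,\|f\|_{\varphi,\infty},\qquad d_\rho(z,w)\leq1.
\]
This is the usual Fock-space estimate: on a ball $D_\rho(z_0,\eta_0)$ of small fixed radius one replaces $\varphi$ by the real part of a holomorphic function $\psi_{z_0}$ (here $\psi_{z_0}(z)=\alpha(\Log z)^2$ up to an additive constant, a suitable local branch), for which $|fe^{-\psi_{z_0}}|\asymp|f|e^{-\varphi}$ on that ball, and applies a Cauchy estimate to the holomorphic function $fe^{-\psi_{z_0}}$, using that $|z-w|\asymp d_\rho(z,w)\,|z_0|$ while the ball has Euclidean radius $\asymp|z_0|$. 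Granting this, let $\widetilde\Lambda\subset\Lambda$ be a maximal $d_\rho$-separated subset with separation constant $\eta$. Every $\lambda\in\Lambda$ lies within $d_\rho$-distance $\eta$ of some point of $\widetilde\Lambda$, so $\|f\|_{\varphi,\infty,\Lambda}\leq\|f\|_{\varphi,\infty,\widetilde\Lambda}+C\eta\,\|f\|_{\varphi,\infty}$; combined with the sampling inequality for $\Lambda$ this gives, for $\eta$ small enough, $\|f\|_{\varphi,\infty}\leq\frac{L}{1-LC\eta}\,\|f\|_{\varphi,\infty,\widetilde\Lambda}$, so that $\widetilde\Lambda$ is a $d_\rho$-separated set of sampling for $\ahc$.

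It remains to show $D^{-}(\widetilde\Lambda)\geq2\alpha$, which is where the Ramanathan--Steger comparison (as used in \cite{MMO,OS}) enters. By Theorem~\ref{BaseRiesz} with $\delta_n\equiv0$ the sequence $\Gamma$ is a complete interpolating sequence for $\ahd$, hence by Theorem~\ref{infin} the sequence $\widetilde\Gamma=\Gamma\cup\{1\}$ is a complete interpolating sequence for $\ahc$; it is $d_\rho$-separated, and since $\gamma_n=e^{(n+1)/(2\alpha)}$ a direct count gives $\Card(\widetilde\Gamma\cap\cA(r,Rr))=2\alpha\log R+O(1)$, whence $D^{-}(\widetilde\Gamma)=D^{+}(\widetilde\Gamma)=2\alpha$. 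Comparing $\widetilde\Lambda$ (a $d_\rho$-separated set of sampling for $\ahc$) with $\widetilde\Gamma$ (a $d_\rho$-separated interpolating sequence for $\ahc$), the comparison method produces, for any fixed dilation $\tau>1$ and all large $r$ and $R$,
\[
  \Card\bigl(\widetilde\Lambda\cap\cA(r/\tau,\tau Rr)\bigr)\geq\Card\bigl(\widetilde\Gamma\cap\cA(r,Rr)\bigr)-C_\tau,
\]
with $C_\tau$ independent of $r$ and $R$. Dividing by $\log R$ and letting first $r\to\infty$, then $R\to\infty$ (the dilation is harmless, since $\log(\tau^2R)/\log R\to1$), we obtain $D^{-}(\widetilde\Lambda)\geq D^{-}(\widetilde\Gamma)=2\alpha$, as desired.

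The main obstacle is the comparison step in the $\ahc$ setting. In the Hilbert case, Ramanathan--Steger compare a frame of reproducing kernels with a Riesz sequence of such kernels by a near-orthogonality dimension count; to run the argument here one has to set up the appropriate finite-dimensional spaces of functions in $\ahc$ that are essentially supported on a long logarithmic annulus, argue that their ``dimension'' is governed by $\Card(\widetilde\Gamma\cap\cA)$ (because $\widetilde\Gamma$ is complete interpolating there), and verify that the $L^{\infty}$ sampling inequality for $\widetilde\Lambda$ forces at least that many points of $\widetilde\Lambda$ in a slightly enlarged annulus, up to an error term that is washed out by the logarithmic normalisation in $D^{-}$. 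By contrast, the thinning step and part (i) are routine.
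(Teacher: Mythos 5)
Your part (i) is exactly the paper's argument: both deduce it from Theorem~\ref{denscomp}(ii) together with the monotonicity of the sup-norm sampling inequality, and your thinning step for (ii) is the content of Corollary~\ref{lemma2}. The problem is the rest of part (ii): the Ramanathan--Steger comparison ``in the $\ahc$ setting'' is precisely the step you do not carry out, and it is not a routine adaptation. The comparison machinery (the paper's Lemma~\ref{InterSamplRS}) is genuinely Hilbertian --- it uses the dual frame of the normalized reproducing kernels of the sampling sequence, orthogonal projections onto finite-dimensional subspaces of $\ahd$, and a trace estimate --- so it requires the sampling sequence to be sampling for $\mathcal{F}^2$; there is no obvious $L^\infty$ substitute for the near-orthogonality dimension count, and you acknowledge this yourself by calling it ``the main obstacle''. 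Moreover, even the Hilbert-space lemma would not accept your choice of comparison pair: it needs the interpolating sequence to be interpolating for the strictly smaller weight $(1-\varepsilon)\varphi$ (this gap is what makes the auxiliary functions $\kappa(\cdot,\lambda)$ lie in $\ahd$ with uniformly bounded norms and makes the tail sums $\|P_{\cS}\kappa(\cdot,\lambda)-\kappa(\cdot,\lambda)\|$ small), whereas you propose to compare $\widetilde\Lambda$ with $\widetilde\Gamma=\Gamma\cup\{1\}$ at the same weight. So as written the proof of (ii) reduces the theorem to an unproved claim.

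The paper closes exactly this gap by a different route that avoids any $L^\infty$ comparison: after thinning (Corollary~\ref{lemma2}), it invokes Lemma~\ref{passageinfty2}, a Beurling-type duality argument showing that a $d_\rho$-separated sequence which is sampling for $\mathcal{F}^{\infty}_{(1+\varepsilon)\varphi'}$ is sampling for the Hilbert space $\mathcal{F}^{2}_{\varphi'}$; writing $\alpha=(1+\varepsilon)\alpha'$ this converts your $\infty$-sampling hypothesis into $2$-sampling for the weights $\alpha'(\log^+ r)^2$, and then the already-established necessity in Theorem~\ref{thm2}(ii) (which does use Lemma~\ref{InterSamplRS}, with the interpolating sequence $\Gamma_{(1-\varepsilon)\alpha'}$ for the smaller weight) yields $D^{-}\geq 2\alpha/(1+\varepsilon)$, and letting $\varepsilon\to 0$ gives $D^{-}\geq 2\alpha$. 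If you want to salvage your outline, you should either prove an $L^\infty$ analogue of the comparison lemma from scratch, or insert the passage-to-$p=2$ step (Lemma~\ref{passageinfty2}) as the paper does; the latter is the substantive missing ingredient in your argument.
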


\begin{thm} \textup(Sampling, $p=2$\textup) 
 \label{thm2}
 Let $\varphi(r)=\alpha(\log^+ r)^2$,  $\alpha>0$. Then

 \begin{itemize}
 \item[(i)] every $d_\rho$-separated sequence $\Lambda$ 
 with $D^{-}({\Lambda})> 2\alpha$, is a set of  sampling for $\ahd$;
 
 \item[(ii)] if the sequence $\Lambda$ is a set of sampling for $\ahd$, 
 then it is a finite union of  $d_{\rho}$-separated subsequences and 
 $\Lambda$ contains a $d_\rho$-separated sequence 
 $\widetilde{\Lambda}$  such that  $D^{-}(\widetilde{\Lambda})\geq 2\alpha$.
 \end{itemize}
\end{thm}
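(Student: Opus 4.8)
The plan is to derive both statements from Theorem \ref{denscomp}(ii) together with the Ramanathan--Steger comparison principle, mirroring the structure already in place for $p=\infty$ in Theorem \ref{thm1}.

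For part (i): given a $d_\rho$-separated sequence $\Lambda$ with $D^-(\Lambda)>2\alpha$, apply Theorem \ref{denscomp}(ii) to extract a complete interpolating subsequence $\Lambda'\subset\Lambda$ for $\ahd$. Since $\Lambda'$ is complete interpolating, $\{\K_\lambda\}_{\lambda\in\Lambda'}$ is a Riesz basis, hence in particular $\Lambda'$ is a sampling sequence; that is, $\|f\|_{\varphi,2}^2\asymp\|f\|_{\varphi,2,\Lambda'}^2$ for all $f\in\ahd$. Because $\Lambda'\subset\Lambda$ and all summands $|f(\lambda)|^2/\bk_\lambda(\lambda)$ are nonnegative, $\|f\|_{\varphi,2,\Lambda'}^2\le\|f\|_{\varphi,2,\Lambda}^2$, so the lower sampling inequality for $\Lambda$ follows immediately. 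The upper inequality $\|f\|_{\varphi,2,\Lambda}^2\lesssim\|f\|_{\varphi,2}^2$ is exactly the statement that $\Lambda$ is a Carleson (Bessel) sequence for $\ahd$; this is a standard consequence of $d_\rho$-separation via the kernel estimate \eqref{kernel-estimate} and a Schur-test / Plancherel--P\'olya argument (packing disks $D(\lambda,c|\lambda|)$ disjointly and using subharmonicity of $|f|^2e^{-2\varphi}$ on each disk). So $\Lambda$ is sampling for $\ahd$.

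For part (ii): suppose $\Lambda$ is sampling for $\ahd$. The upper sampling inequality $\|f\|_{\varphi,2,\Lambda}^2\lesssim\|f\|_{\varphi,2}^2$ forces $\Lambda$ to be a finite union of $d_\rho$-separated subsequences --- one shows that in each ball $D_\rho(\lambda,\beta)$ there can be at most a bounded number of points, since otherwise testing against the normalized reproducing kernel $\K_w$ at a nearby point $w$ (which is uniformly large at all points of a small ball by \eqref{kernel-estimate}) violates the Bessel bound. From this finite union one passes to a single $d_\rho$-separated subsequence $\widetilde\Lambda\subset\Lambda$ which is still sampling, by a standard argument: a sampling sequence that is a finite union of separated pieces contains a separated sampling subsequence (one removes redundant points; alternatively invoke the description via Riesz bases). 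Finally, to get $D^-(\widetilde\Lambda)\ge 2\alpha$, argue by contradiction: if $D^-(\widetilde\Lambda)<2\alpha$, then $D^+$ of a suitable subsequence is $<2\alpha$ on infinitely many annuli, and Theorem \ref{denscomp}(i) (applied in the comparison form of Ramanathan--Steger) produces, for any complete interpolating sequence $\Gamma'$ and any $\vep>0$, a near-inclusion allowing one to build functions in $\ahd$ concentrated where $\widetilde\Lambda$ is too sparse, violating the lower sampling bound. The precise mechanism is the comparison lemma of \cite{MMO,OS}: a sampling sequence must have lower density at least that of a complete interpolating sequence, which by Theorem \ref{BaseRiesz} (via counting points of $\Gamma_\alpha$ in $\cA(r,Rr)$, giving density exactly $2\alpha$) equals $2\alpha$.

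The main obstacle is part (ii), specifically the passage from "$\Lambda$ sampling" to "$\Lambda$ is a finite union of separated sequences and contains a separated sampling subsequence with density $\ge 2\alpha$." The finite-union claim needs a quantitative local estimate controlling the number of sample points in each $d_\rho$-ball; and the density lower bound requires setting up the Ramanathan--Steger comparison correctly in this non-translation-invariant, logarithmically-scaled setting --- in particular checking that the homogeneity of $d_\rho$ (balls $D(\lambda,c|\lambda|)$) makes the multiplicative annuli $\cA(r,Rr)$ the right objects, so that the comparison with $\Gamma_\alpha$ yields the clean threshold $2\alpha$. Once these two points are in place, everything else is a routine combination of Theorems \ref{BaseRiesz}, \ref{infin}, \ref{denscomp} with standard Bessel-sequence estimates.
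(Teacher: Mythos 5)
Part (i) and the first half of part (ii) are fine and essentially the paper's route: sufficiency follows from Theorem \ref{denscomp}(ii) plus the Bessel bound for separated sequences (Lemma \ref{lemma3}, via Lemma \ref{lem1}(iii)), and the reduction of a sampling sequence to a finite union of $d_\rho$-separated pieces containing a separated sampling subsequence is exactly Corollary \ref{lemma2} together with Lemma \ref{lemma3}. The genuine gap is in the density lower bound $D^-(\widetilde{\Lambda})\ge 2\alpha$, which is the substantive content of part (ii). You invoke ``a sampling sequence must have lower density at least that of a complete interpolating sequence'' as a black box from \cite{MMO,OS}, but in this weight that statement is precisely what has to be proved, and the paper proves it by a nontrivial adaptation (Lemma \ref{InterSamplRS}): one compares the separated sampling sequence $\widetilde\Lambda$ for $\ahd$ not with a complete interpolating sequence in the \emph{same} space, but with a sequence interpolating for the strictly smaller space $\mathcal{F}_{(1-\varepsilon)\varphi}^{2}$, namely $\Gamma_{(1-\varepsilon)\alpha}$ of density $2(1-\varepsilon)\alpha$ (interpolating by \cite[Theorem 2.8]{BL}), and then lets $\varepsilon\to 0$. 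The $\varepsilon$-gap in the weight is not a technicality: it is what allows one to multiply the interpolation functions $f_\lambda\in\mathcal{F}_{(1-\varepsilon)\varphi}^{2}$ by a kernel-type factor built from $\Gamma_{\varepsilon\alpha}$ to obtain functions $\kappa(\cdot,\lambda)\in\ahd$ that are uniformly bounded in norm, biorthogonal to $\{\K_\lambda\}$, and spatially concentrated, so that the Ramanathan--Steger trace inequality $(1-\delta^2)\Card(\Lambda\cap\cA(x,Rx))\le \Card(\cS\cap\cA(\delta x,Rx/\delta))$ goes through. Your outline never introduces this weight shift, so the concentration estimates you would need have no source.

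Moreover, your proposed contradiction via Theorem \ref{denscomp}(i) does not work as stated: that result requires $D^+(\Lambda)<2\alpha$, and the hypothesis $D^-(\widetilde\Lambda)<2\alpha$ gives no control of the upper density (a sequence can be very sparse along some annuli and very dense along others), so ``$D^+$ of a suitable subsequence is $<2\alpha$'' is not something you can extract, nor would it feed into the comparison in the way you suggest. To close the gap you should replace this step by the paper's scheme: prove (or carefully adapt from \cite[Lemma 40]{MMO}) the comparison Lemma \ref{InterSamplRS} in this logarithmic geometry, apply it with $\cS=\widetilde\Lambda$ and $\Lambda=\Gamma_{(1-\varepsilon)\alpha}$, conclude $D^-(\widetilde\Lambda)\ge 2(1-\varepsilon)\alpha$ for every $\varepsilon>0$, and let $\varepsilon\to 0$.
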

 
\begin{thm} \textup(Interpolation, $p=2,\infty$\textup)
 \label{thm-int}
 Let $\varphi(r)=\alpha(\log^+ r)^2$,  $\alpha>0$. Then

\begin{itemize}
 \item[(i)] every $d_\rho$-separated sequence $\Lambda$ 
 with $D^{+}({\Lambda})< 2\alpha$
 is a set of interpolation for $\ahp$, $p=2,\infty$;

 \item[(ii)] if the sequence $\Lambda$ is a set of interpolation for $\ahp$, $p=2,\infty$, 
 then it is a $d_{\rho}$-separated sequence with $D^{+}({\Lambda})\leq 2\alpha$.
 \end{itemize}
\end{thm}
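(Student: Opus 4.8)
The plan is to derive Theorem~\ref{thm-int} from Theorem~\ref{denscomp} together with the Ramanathan--Steger comparison principle, treating the sufficiency (i) and the necessity (ii) separately.

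For part (i), suppose $\Lambda$ is $d_\rho$-separated with $D^+(\Lambda)<2\alpha$. By Theorem~\ref{denscomp}(i), $\Lambda$ is contained in some complete interpolating sequence $\Lambda'$ for $\ahp$, $p=2,\infty$. A complete interpolating sequence is in particular an interpolating sequence, and interpolation passes trivially to subsequences: given data $(v_\lambda)_{\lambda\in\Lambda}$ in the appropriate weighted $\ell^p$ space, extend it by zero to $\Lambda'$ (the extension still lies in $\ell^p_{\varphi,\Lambda'}$), interpolate by some $f\in\ahp$, and restrict to $\Lambda$. Hence $\Lambda$ is interpolating. This direction is essentially immediate once Theorem~\ref{denscomp} is in hand.

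For part (ii), assume $\Lambda$ is interpolating for $\ahp$. The $d_\rho$-separation is the standard necessary condition: evaluating the interpolation problem on two very close points $\lambda,\lambda'$ forces, via the reproducing kernel estimate \eqref{kernel-estimate} and the open mapping theorem, a lower bound on $d_\rho(\lambda,\lambda')$; for $p=\infty$ one argues directly with the sup-norm and the kernel growth $e^{\varphi(z)}/\sqrt{1+|z|^2}$. The density bound $D^+(\Lambda)\le 2\alpha$ is the substantive claim and is where the Ramanathan--Steger method enters: one compares $\Lambda$ with the reference complete interpolating sequence $\Gamma=\Gamma_\alpha$, whose counting function satisfies $\Card(\Gamma\cap\cA(r,Rr))=2\alpha\log R+O(1)$, so that $D^\pm(\Gamma)=2\alpha$. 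If one had $D^+(\Lambda)>2\alpha$, then on arbitrarily large annuli $\Lambda$ would be ``denser'' than a complete interpolating sequence; applying the comparison lemma (as in \cite{MMO,OS}) on a large annulus $\cA(r,Rr)$ one produces a nonzero function in $\ahp$ vanishing on the local piece of $\Lambda$ with controlled norm, contradicting the lower interpolation estimate (equivalently, contradicting that the restriction map is bounded below on the model subspace). One must take care that the comparison is carried out in the correct geometry: because the metric $\rho(r)=r/\sqrt{2\alpha}$ makes the problem logarithmically homogeneous, the annuli $\cA(r,Rr)$ play the role of the intervals of fixed length in the Paley--Wiener setting, and the substitution $z\mapsto \log z$ (locally) transports the counting to an additive one where the classical argument applies.

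The main obstacle is part (ii), specifically making the Ramanathan--Steger comparison rigorous in this non-translation-invariant setting: one needs uniform (in $r$) control of the auxiliary interpolating/sampling constants when transplanting the problem from the annulus $\cA(r,Rr)$ to a normalized model, and one must handle the passage between the $p=2$ Hilbert-space formulation (Riesz sequence, frame-type inequalities) and the $p=\infty$ formulation (where one argues with extremal functions and the sup-norm rather than with an orthogonality structure). I expect the $p=\infty$ case of the necessity to require a separate argument paralleling Theorem~\ref{infin} and the $L^\infty$ analogue of the comparison lemma, rather than following formally from the $p=2$ case.
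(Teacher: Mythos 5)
Part (i) of your proposal is correct and is exactly the paper's argument: Theorem \ref{denscomp}(i) embeds $\Lambda$ into a complete interpolating sequence and you extend the data by zero. The separation claim in (ii) is also fine (it is Corollary \ref{lemma2x}). The genuine gap is in the density bound of (ii). The mechanism you describe -- comparing $\Lambda$ with the critical sequence $\Gamma_\alpha$ itself and ``producing a nonzero function in $\ahp$ vanishing on the local piece of $\Lambda$, contradicting the lower interpolation estimate'' -- is not how the Ramanathan--Steger comparison works, and for an interpolating sequence there is no lower estimate of the restriction map to contradict. The comparison lemma (Lemma \ref{InterSamplRS}) is a trace inequality: from the interpolation property one builds, for each $\lambda\in\Lambda$, functions $\kappa(\cdot,\lambda)\in\ahd$ with uniformly bounded norms, biorthogonal to the normalized kernels $\K_\lambda$, and compares $\operatorname{tr}(P_\Lambda P_{\cS})$ from above by $\operatorname{rank} P_{\cS}\le\Card(\cS\cap\cA(\delta x,Rx/\delta))$ and from below by $(1-o(1))\Card(\Lambda\cap\cA(x,Rx))$, where $\cS$ is a sampling sequence. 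No function vanishing on $\Lambda$ appears anywhere.

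More importantly, you cannot run this comparison at equal weight against $\Gamma_\alpha$: the construction of the uniformly bounded $\kappa(\cdot,\lambda)$ requires a strict weight gap, since the interpolating functions $f_\lambda$ (with $f_\lambda(\lambda)=1$, $f_\lambda|_{\Lambda\setminus\{\lambda\}}=0$) must be multiplied by a localizing factor $G(z)/(z-\gamma_\lambda)$ built from the generating function of a sparse sequence $\Gamma_{\varepsilon\alpha}$, which costs an extra growth $e^{\varepsilon\varphi}$. This is why Lemma \ref{InterSamplRS} assumes $\Lambda$ interpolating for $\mathcal{F}_{(1-\varepsilon)\varphi}^{p}$ while $\cS$ is sampling for $\ahd$. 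The paper therefore takes $\cS=\Gamma_{(1+\varepsilon)\alpha}$, which by \cite[Theorem 2.8]{BL} is sampling for $\mathcal{F}_{(1+\varepsilon)\varphi}^{2}$ and has density $2(1+\varepsilon)\alpha$, applies the lemma with the dilated weight, and lets $\varepsilon\to0$ (and $\delta\to 0$) to conclude $D^{+}(\Lambda)\le 2\alpha$; this $\varepsilon$-room is the missing idea in your sketch. Finally, the $p=\infty$ case does not need a separate argument parallel to Theorem \ref{infin}: the same comparison lemma covers it, with $\kappa(z,\lambda)$ defined by a minor modification (dropping the factor $z/\lambda$), since the comparison sequence is in any case only required to be sampling for the Hilbert space.
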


In the case when the density is critical, i.e., 
$D^+(\Lambda)= D^-(\Lambda)=2\alpha$, any of the following situations may occur: 
a system my be complete interpolating, either complete or interpolating, and, finally,
neither complete nor interpolating (see Section \ref{section9}
for the corresponding examples). Thus, there are no 
density characterizations for sampling or interpolating sequences 
for $p=2,\infty$. 

The paper is organized as follows. In the next section, we present some 
elementary results on sampling and interpolation in our spaces. Some of 
them follow from a Bernstein type inequality that we will also give
in this section. It is an interesting remark that we can 
consider our Fock spaces as subspaces of a suitable $H^{\infty}$ from 
which we deduce that half-lines are sampling for $p=\infty$. 
Furthermore, we show that the lower density of
a zero sequence has to be less than or equal to the critical density. 
Sections \ref{section7}  and \ref{section8} are devoted to the proof of our main results
(Theorems \ref{BaseRiesz} and \ref{infin}), from which 
we deduce the density results on sampling and interpolation in Section \ref{section4}.
Examples of sequences of critical density which are neither sampling 
nor interpolating are discussed in Section \ref{section9}. There we will also
show that it is not possible to switch from an interpolating sequence to a sampling
sequence by adding one point without one of the sequences being complete interpolating.
\\

A final word on notation:
$A\lesssim B$ means that there is a constant $C$ independent of the
relevant variables such that $A \leq CB$. We write $A\asymp B$  
if both $A\lesssim B$ and $B\lesssim A$.

\subsection*{Acknowledgements} Alexander Borichev has read the first draft of the paper
and suggested many simplifications and improvements.
The authors are grateful to him and also to  
Yurii Belov,  Yuri Lyubarskii, 
Pascal Thomas and  Kristian  Seip, for helpful discussions.   


\section{Preliminary results}\label{section2}
\subsection{$d_\rho$-separated sequences}
Recall that $\varphi(r)=\alpha (\log^+ r)^2$, and
$$
d_\rho(z,w)={\sqrt{2\alpha}} \frac{|z-w|}{\sqrt{2\alpha}+\min(|z|,|w|)}. 
$$

A central tool in our discussion is 
the following Bernstein type result whose proof can be found in
\cite[Lemma 4.1]{BDK}.  
\begin{lem}
\label{lem1}
Let $f$ be a holomorphic function. 

\begin{enumerate}
\item[(i)] If $\|f\|_{\varphi,\infty}=1$, then for every $c>0$ there exists 
$0<\beta<1$ such that whenever $|f(z_0)|e^{-\varphi(z_0)}\ge c$
for some $z_0\in\C$, then
for every $z\in D_{\rho}(z_0,\beta)$ we have
$$|f(z)|e^{- \varphi (z)}\ge \frac{c}{2}e^{-\alpha\pi^2}.$$

\item[(ii)] There is $\beta_0>0$, such that if $0<\beta<\beta_0$, then for $z\in\C$ with $d_{\rho}(z,z_0)\le \beta$ we have
$$||f(z)|e^{- \varphi (z)}-|f(z_0)|e^{-\varphi(z_0)}|\lesssim d_\rho(z,z_0)\max_{D_\rho(z_0,\beta)}|f|e^{-\varphi}.$$
\item[(iii)]  $\displaystyle |f(z)|e^{-\varphi(z)}\lesssim\frac{1}{|z|^2}\int_{D_{\rho}(z,\beta)}
 |f(w)|e^{-\varphi(w)}dm(w).$
\end{enumerate}
\end{lem}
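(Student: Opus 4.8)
We indicate the plan of proof; complete details may be found in \cite[Lemma 4.1]{BDK}. All three estimates are local, and the plan is to \emph{flatten the weight} by a conformal change of variable, after which each reduces to a standard fact about a bounded holomorphic function on a fixed disk. When $|z_0|\lesssim 1$ there is essentially nothing to prove: $D_\rho(z_0,\beta)$ is comparable to a Euclidean disk of radius $\asymp\beta$, $\varphi$ is bounded on it, and then (i) and (ii) follow from the Cauchy estimate and (iii) from the sub-mean value inequality for a bounded holomorphic function on a disk.

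Assume now $|z_0|\ge 1$ and set $\ell=\log|z_0|$. I would substitute $z=z_0e^{w}$. For $\beta$ small the region $D_\rho(z_0,\beta)$ is comparable to $D(z_0,q|z_0|)$, so its preimage sits inside $\{|w|<s\}$ with $s\asymp\beta$, and there $d_\rho(z_0e^{w},z_0)\asymp|w|$. The elementary identity
$$
\varphi(z_0e^{w})=\varphi(z_0)+2\alpha\ell\,\re w+\alpha(\re w)^{2}
$$
isolates the only non-harmonic term $\alpha(\re w)^{2}$, which is bounded by $\alpha s^{2}$ on $\{|w|<s\}$. Hence, setting
$$
g(w)=f(z_0e^{w})e^{-2\alpha\ell w},\qquad h=e^{-\varphi(z_0)}g,
$$
one obtains $|f(z_0e^{w})|e^{-\varphi(z_0e^{w})}=|h(w)|\,e^{-\alpha(\re w)^{2}}$, so $|f|e^{-\varphi}$ and $|h|$ agree on $D_\rho(z_0,\beta)$ up to constants depending only on $\alpha$ and $\beta$; moreover $\|f\|_{\varphi,\infty}=1$ turns $h$ into a holomorphic function on $\{|w|<s\}$ with $\|h\|_\infty\le e^{\alpha s^{2}}$ and $|h(0)|=|f(z_0)|e^{-\varphi(z_0)}$.

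Granting this reduction, the three assertions follow quickly. For (iii): apply the sub-mean value property $|h(0)|\le(\pi s^{2})^{-1}\int_{|w|<s}|h|\,dm$ and change variables back, the Jacobian of $z=z_0e^{w}$ being $|z|^{2}\asymp|z_0|^{2}$, to get $|f(z)|e^{-\varphi(z)}\lesssim|z|^{-2}\int_{D_\rho(z,\beta)}|f|e^{-\varphi}\,dm$. For (ii): the Cauchy estimate on a slightly smaller disk gives $|h(w)-h(0)|\lesssim|w|\,\|h\|_\infty$; together with $|1-e^{-\alpha(\re w)^{2}}|\lesssim(\re w)^{2}\lesssim|w|$ and the comparison above, this yields, for $\beta$ below a suitable $\beta_0$, the bound $\big||f(z)|e^{-\varphi(z)}-|f(z_0)|e^{-\varphi(z_0)}\big|\lesssim d_\rho(z,z_0)\max_{D_\rho(z_0,\beta)}|f|e^{-\varphi}$. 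For (i): the same Cauchy estimate gives $|h(w)|\ge|h(0)|-C|w|\ge c-C|w|$, so if $\beta$ (hence $s\asymp\beta$) is taken small enough in terms of $c$ that $C|w|\le c/2$ on the preimage, then, using $|\re w|<1$ there, $|f(z)|e^{-\varphi(z)}=|h(w)|e^{-\alpha(\re w)^{2}}\ge\tfrac{c}{2}e^{-\alpha}\ge\tfrac{c}{2}e^{-\alpha\pi^{2}}$ on $D_\rho(z_0,\beta)$.

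The point I would be most careful about is the uniformity of the change of variable $z=z_0e^{w}$: one must verify that $|\re w|$ stays small on the preimage of $D_\rho(z_0,\beta)$ uniformly in $z_0$, match the two regimes across $|z_0|\asymp 1$, and keep explicit track of how small $\beta$ must be chosen in part (i). Beyond that the ingredients — the Cauchy estimate and the sub-mean value inequality — are entirely routine.
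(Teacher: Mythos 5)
The paper itself gives no proof of this lemma --- it only cites \cite[Lemma 4.1]{BDK} --- so there is no internal argument to compare with; your substitution $z=z_0e^{w}$ is the natural way to specialize that argument to $\varphi(r)=\alpha(\log^+ r)^2$, the identity $\varphi(z_0e^{w})=\varphi(z_0)+2\alpha\ell\,\re w+\alpha(\re w)^2$ is correct wherever $\log|z_0e^{w}|\ge 0$, and parts (ii) and (iii) are fine as sketched (for (ii) one applies the Cauchy estimate with the sup taken over a slightly larger $\rho$-disk; since the implied constant is allowed to depend on $\beta$, as in the way the paper uses (ii) in Corollary \ref{pertp=2}, this is harmless).

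There is, however, one step that fails as literally written, and it is exactly the point you flagged: in (i) you only bound $h$ on $\{|w|<s\}$ with $s\asymp\beta$, and then invoke ``the same Cauchy estimate'' to get $|h(w)-h(0)|\le C|w|$ and choose $\beta$ so small that $C|w|\le c/2$ on that disk. But a Cauchy estimate on a disk of radius $\asymp s$ carries a constant of order $\|h\|_\infty/s$, so $C\asymp 1/\beta$, and $C|w|$ does not become small as $\beta\to 0$: the choice of $\beta$ is circular. The repair is to use the global hypothesis $\|f\|_{\varphi,\infty}=1$, which is exactly what distinguishes (i) from (ii)--(iii): the function $h(w)=e^{-\varphi(z_0)}f(z_0e^{w})e^{-2\alpha\ell w}$ is entire in $w$, and as soon as $\ell=\log|z_0|\ge 1$ one has $\log^+|z_0e^{w}|=\ell+\re w$ on the whole unit disk, hence $|h(w)|\le e^{\alpha(\re w)^2}\le e^{\alpha}$ for $|w|\le 1$, uniformly in $z_0$ (the remaining range $|z_0|\le e$ is the bounded-weight case you already treat separately). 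The Cauchy estimate on the fixed disk $\{|w|\le 1/2\}$ then gives $|h(w)-h(0)|\le C_\alpha|w|$ with $C_\alpha$ depending only on $\alpha$, and choosing $\beta\le\min\bigl(1/2,\,c/(2C_\alpha)\bigr)$ makes your argument go through, with lower bound $\tfrac{c}{2}e^{-\alpha}\ge \tfrac{c}{2}e^{-\alpha\pi^2}$ on $D_\rho(z_0,\beta)$. In short: in (i) the Cauchy constant must come from a disk of fixed radius, obtained from the global bound on $f$, not from the $\beta$-dependent preimage of $D_\rho(z_0,\beta)$; with that correction your proof is complete and is essentially the argument of \cite{BDK}.
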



From Lemma \ref{lem1} we can deduce the following immediate corollaries
(proofs can be found for instance in \cite[Chapter 5]{HKZ} or \cite[Chapter 4]{Z}).

\begin{cor}\label{lemma2} If $\Lambda$ is sampling for $\ahp$, $p=2,\infty$,  then
there exists a $d_{\rho}$-separated sequence $\widetilde{\Lambda}\subset \Lambda$
which is sampling for $\ahp$.
\end{cor}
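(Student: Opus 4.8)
\textbf{Proof plan for Corollary \ref{lemma2}.}
The plan is to extract from the sampling sequence $\Lambda$ a maximal $d_\rho$-separated subsequence and show that this subsequence is still sampling, the loss being absorbed by the Bernstein inequality of Lemma \ref{lem1}. First I would fix a small parameter $\beta>0$, smaller than the $\beta_0$ of Lemma \ref{lem1}(ii) and small enough for part (i) to be useful, and choose $\widetilde\Lambda\subset\Lambda$ to be maximal with the property that $d_\rho(\lambda,\lambda')\ge\beta$ for distinct $\lambda,\lambda'\in\widetilde\Lambda$ (Zorn's lemma, or a greedy construction ordering $\Lambda$ by modulus). By maximality, every $\lambda\in\Lambda$ lies within $d_\rho$-distance $\beta$ of some point of $\widetilde\Lambda$; equivalently, $\Lambda\subset\bigcup_{\mu\in\widetilde\Lambda}D_\rho(\mu,\beta)$. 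Moreover, since $\beta$ is fixed, the balls $D_\rho(\mu,\beta/2)$, $\mu\in\widetilde\Lambda$, have bounded overlap (each point of $\C$ lies in at most $M=M(\beta)$ of them), a standard fact about this semi-metric that follows from the comparison of $D_\rho(\lambda,\beta)$ with Euclidean disks $D(\lambda,q|\lambda|)$ noted in the introduction.

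Next I would transfer the sampling inequality. For $p=\infty$: given $f\in\ahc$, pick $z_0$ with $|f(z_0)|e^{-\varphi(z_0)}$ close to $\|f\|_{\varphi,\infty}$ (say at least half of it); since $\Lambda$ is sampling, $|f(\lambda_0)|e^{-\varphi(\lambda_0)}\gtrsim\|f\|_{\varphi,\infty}$ for some $\lambda_0\in\Lambda$; that $\lambda_0$ sits in $D_\rho(\mu,\beta)$ for some $\mu\in\widetilde\Lambda$, and Lemma \ref{lem1}(i) applied with an appropriate $c\asymp\|f\|_{\varphi,\infty}$ gives $|f(\mu)|e^{-\varphi(\mu)}\gtrsim\|f\|_{\varphi,\infty}$, i.e. $\|f\|_{\varphi,\infty}\lesssim\|f\|_{\varphi,\infty,\widetilde\Lambda}$; the reverse inequality is trivial. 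For $p=2$: starting from $\sum_{\lambda\in\Lambda}|f(\lambda)|^2/\bk_\lambda(\lambda)\asymp\|f\|_{\varphi,2}^2$ and the kernel estimate \eqref{kernel-estimate}, which turns this into $\sum_{\lambda\in\Lambda}(1+|\lambda|^2)|f(\lambda)|^2e^{-2\varphi(\lambda)}\asymp\|f\|_{\varphi,2}^2$, I would dominate each term $|f(\lambda)|^2e^{-2\varphi(\lambda)}$ with $\lambda\in D_\rho(\mu,\beta)$ by $\max_{D_\rho(\mu,\beta)}|f|^2e^{-2\varphi}$, then by Lemma \ref{lem1}(iii) by $|\mu|^{-4}\big(\int_{D_\rho(\mu,\beta')}|f|e^{-\varphi}\,dm\big)^2$ for a slightly larger $\beta'$, and by Cauchy--Schwarz by $|\mu|^{-4}\cdot m(D_\rho(\mu,\beta'))\int_{D_\rho(\mu,\beta')}|f|^2e^{-2\varphi}\,dm\lesssim|\mu|^{-2}\int_{D_\rho(\mu,\beta')}|f|^2e^{-2\varphi}\,dm$, using $m(D_\rho(\mu,\beta'))\asymp|\mu|^2$. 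Summing over $\lambda\in\Lambda$ and regrouping by the ball containing $\lambda$, the bounded-overlap property of the $D_\rho(\mu,\beta')$ controls the multiplicity, so $\|f\|_{\varphi,2}^2\asymp\sum_{\lambda\in\Lambda}(\cdots)\lesssim\sum_{\mu\in\widetilde\Lambda}\int_{D_\rho(\mu,\beta')}|f|^2e^{-2\varphi}\,dm\lesssim\|f\|_{\varphi,2}^2$; but that last chain, read together with Lemma \ref{lem1}(iii)--style lower bounds, shows a positive fraction of the mass is captured near $\widetilde\Lambda$ and hence one needs the lower sampling bound in a slightly more careful form. The cleaner route for $p=2$ is: from the lower sampling bound for $\Lambda$, $\|f\|_{\varphi,2}^2\lesssim\sum_{\mu\in\widetilde\Lambda}\sum_{\lambda\in\Lambda\cap D_\rho(\mu,\beta)}(1+|\lambda|^2)|f(\lambda)|^2e^{-2\varphi(\lambda)}\lesssim\sum_{\mu\in\widetilde\Lambda}\Card(\Lambda\cap D_\rho(\mu,\beta))\cdot|\mu|^2\max_{D_\rho(\mu,\beta)}|f|^2e^{-2\varphi}$, and then bound $\Card(\Lambda\cap D_\rho(\mu,\beta))$ uniformly (since $\Lambda$, being sampling, is a finite union of separated sequences by Corollary \ref{lemma2}'s companion, or directly from \eqref{kernel-estimate} applied to $\bk_\mu$) and $|\mu|^2\max_{D_\rho(\mu,\beta)}|f|^2e^{-2\varphi}\lesssim|\mu|^{-2}\int_{D_\rho(\mu,2\beta)}|f|^2e^{-2\varphi}\,dm$ again by Lemma \ref{lem1}(iii); bounded overlap finishes it, giving $\|f\|_{\varphi,2}^2\lesssim\|f\|_{\varphi,2,\widetilde\Lambda}^2$.

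The main obstacle is the $p=2$ case: one must go from pointwise values at $\Lambda$ to pointwise values at the sparser $\widetilde\Lambda$ without losing a constant, and this forces a correct bookkeeping of three things simultaneously — the sub-mean-value estimate Lemma \ref{lem1}(iii) (to pass from a point to a small disk's worth of integral), the measure estimate $m(D_\rho(\mu,\beta))\asymp|\mu|^2$ (to match the weight $1+|\mu|^2$ coming from the kernel \eqref{kernel-estimate}), and the bounded-overlap of the covering balls (to re-sum). The $p=\infty$ case, by contrast, is essentially a one-line consequence of Lemma \ref{lem1}(i). I would also remark that the same proof shows $\widetilde\Lambda$ can be taken maximal $d_\rho$-separated in $\Lambda$, which is the form actually used later; and, as the paper notes, the argument is classical, so I would keep the write-up brief and refer to \cite[Chapter 5]{HKZ} or \cite[Chapter 4]{Z} for the routine details of the covering and bounded-overlap lemmas.
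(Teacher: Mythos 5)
Your $p=\infty$ argument is fine: maximality of $\widetilde\Lambda$ plus Lemma \ref{lem1}(i) (with $c$ determined by the sampling constant of $\Lambda$, so that $\beta$ can be fixed in advance) does transfer the uniform sampling bound, and this is essentially the standard argument the paper points to in \cite[Chapter 5]{HKZ}, \cite[Chapter 4]{Z}. The $p=2$ half, however, has a genuine gap: both your first attempt and your ``cleaner route'' end by bounding $(1+|\lambda|^2)|f(\lambda)|^2e^{-2\varphi(\lambda)}$, $\lambda\in D_\rho(\mu,\beta)$, through $\max_{D_\rho(\mu,\beta)}|f|^2e^{-2\varphi}$ and then, via Lemma \ref{lem1}(iii), through $\int_{D_\rho(\mu,C\beta)}|f|^2e^{-2\varphi}\,dm$. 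After summing with bounded overlap this chain says only $\|f\|_{\varphi,2}^2\lesssim\|f\|_{\varphi,2}^2$: the values $f(\mu)$, $\mu\in\widetilde\Lambda$, never enter, so the claimed conclusion $\|f\|_{\varphi,2}^2\lesssim\|f\|_{\varphi,2,\widetilde\Lambda}^2$ does not follow from it. (There is also a homogeneity slip: Lemma \ref{lem1}(iii) gives $|\mu|^2\max_{D_\rho(\mu,\beta)}|f|^2e^{-2\varphi}\lesssim\int_{D_\rho(\mu,2\beta)}|f|^2e^{-2\varphi}\,dm$, without the extra factor $|\mu|^{-2}$ you wrote; but this is secondary.)

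The missing idea is the absorption step based on the Lipschitz-type estimate, i.e.\ Lemma \ref{lem1}(ii) rather than just (i) and (iii). For $\lambda\in\Lambda\cap D_\rho(\mu,\beta)$ write
$$
(1+|\lambda|^2)|f(\lambda)|^2e^{-2\varphi(\lambda)}\lesssim(1+|\mu|^2)|f(\mu)|^2e^{-2\varphi(\mu)}
+(1+|\mu|^2)\,d_\rho(\lambda,\mu)\max_{D_\rho(\mu,C\beta)}|f|^2e^{-2\varphi},
$$
so that the main term carries $f(\mu)$ and the error carries the small factor $d_\rho(\lambda,\mu)\le\beta$. Only to the error term do you apply Lemma \ref{lem1}(iii) and bounded overlap, which gives a contribution $O(\beta)\|f\|_{\varphi,2}^2$; since the multiplicity $\Card(\Lambda\cap D_\rho(\mu,\beta))$ is bounded independently of small $\beta$ (by Lemma \ref{lemma3}, $\Lambda$ is a finite union of $d_\rho$-separated sequences, so each small ball meets each piece at most once), you can choose $\beta$ small enough to absorb this error into the left-hand side of the sampling inequality for $\Lambda$, leaving $\|f\|_{\varphi,2}^2\lesssim\|f\|_{\varphi,2,\widetilde\Lambda}^2$. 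This is precisely the mechanism used in the paper's proof of Corollary \ref{pertp=2}, and it is the standard argument in the references the paper cites; without the small parameter and the comparison to $f(\mu)$, the covering and bounded-overlap bookkeeping alone cannot prove the corollary.
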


\begin{cor}\label{lemma2x} 
Every set of interpolation for $\ahp$, $p=2,\infty$, is $d_\rho$-separated.
\end{cor}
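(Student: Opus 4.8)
The plan is to argue by contradiction: suppose $\Lambda$ is a set of interpolation for $\ahp$ but is not $d_\rho$-separated. Then one can extract from $\Lambda$ two sequences of points $(\lambda_n')$ and $(\lambda_n'')$ with $\lambda_n'\neq\lambda_n''$ and $d_\rho(\lambda_n',\lambda_n'')\to 0$ as $n\to\infty$. The idea is to interpolate data that is $1$ at $\lambda_n'$ (in the appropriate normalized sense) and $0$ at all other points of $\Lambda$, and then use Lemma \ref{lem1}(ii) to conclude that the interpolating function must also be large at the nearby point $\lambda_n''$, contradicting the requirement that it vanish there.

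First I would fix, for each $n$, the normalized target sequence $v^{(n)}$ supported on $\{\lambda_n'\}$: in the case $p=\infty$ take $v^{(n)}_{\lambda_n'}=e^{\varphi(\lambda_n')}$ and $v^{(n)}_\lambda=0$ otherwise, so that $\|v^{(n)}\|_{\varphi,\infty,\Lambda}=1$; in the case $p=2$ take $v^{(n)}_{\lambda_n'}=\|\bk_{\lambda_n'}\|_{\varphi,2}$, so that $\|v^{(n)}\|_{\varphi,2,\Lambda}=1$. Since $\Lambda$ is interpolating, there is $f_n\in\ahp$ with $f_n|_\Lambda=v^{(n)}$ and $\|f_n\|_{\varphi,p}\le M$ for a constant $M$ independent of $n$ (the norm of the interpolation operator, which is bounded by the open mapping theorem). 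In particular $|f_n(\lambda_n')|e^{-\varphi(\lambda_n')}=1$ while $f_n(\lambda_n'')=0$. Rescaling $f_n$ to have unit norm, the pointwise estimate $|f_n(\lambda_n')|e^{-\varphi(\lambda_n')}\gtrsim 1$ persists (using \eqref{kernel-estimate} to pass between the $\ell^2$-normalization and the sup-normalization, together with the factor $1/|\lambda_n'|^2$ absorbed into the constant, or directly from $\|f_n\|_{\varphi,\infty}\le M$ in the $p=\infty$ case).

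Now apply Lemma \ref{lem1}(ii): for $d_\rho(\lambda_n'',\lambda_n')\le\beta_0$ we have
$$
\big|\,|f_n(\lambda_n')|e^{-\varphi(\lambda_n')}-|f_n(\lambda_n'')|e^{-\varphi(\lambda_n'')}\,\big|\lesssim d_\rho(\lambda_n'',\lambda_n')\max_{D_\rho(\lambda_n',\beta_0)}|f_n|e^{-\varphi}.
$$
The maximum on the right is $\lesssim\|f_n\|_{\varphi,\infty}\lesssim 1$ (in the $p=2$ case via Lemma \ref{lem1}(iii) which controls $|f_n|e^{-\varphi}$ locally by the $L^2$-norm). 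Since $f_n(\lambda_n'')=0$, the left side equals $|f_n(\lambda_n')|e^{-\varphi(\lambda_n')}\gtrsim 1$, so we get $1\lesssim d_\rho(\lambda_n'',\lambda_n')\to 0$, a contradiction. Hence $\Lambda$ must be $d_\rho$-separated.

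The main obstacle I anticipate is the bookkeeping of constants when transferring between the Hilbert normalization (data measured against $\|\bk_\lambda\|_{\varphi,2}$) and the uniform pointwise bound needed to feed into Lemma \ref{lem1}(ii)–(iii); this is exactly where \eqref{kernel-estimate} and the $1/|z|^2$ factors must be tracked carefully so that the lower bound $|f_n(\lambda_n')|e^{-\varphi(\lambda_n')}\gtrsim 1$ is uniform in $n$. Once that uniformity is in hand, the contradiction is immediate. (These details are, as the excerpt notes, standard — cf. \cite[Chapter 5]{HKZ} or \cite[Chapter 4]{Z} — so I would only sketch them.)
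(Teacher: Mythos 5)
Your overall strategy -- interpolating a normalized ``delta'' datum at one of two $d_\rho$-close points of $\Lambda$ and feeding the resulting uniformly bounded family into the Bernstein-type Lemma \ref{lem1}(ii) -- is exactly the standard argument the paper has in mind (the paper gives no proof, referring to \cite{HKZ,Z}), and your $p=\infty$ case is complete and correct, modulo the routine closed-graph/open-mapping fact that supplies the uniform constant $M$ and the harmless reduction to $|\lambda_n'|\to\infty$ (an interpolating set cannot have a finite accumulation point).

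In the $p=2$ case, however, the normalization claim you yourself single out as the delicate point is false as stated: if $\|f_n\|_{\varphi,2}\le M$, then $|f_n(\lambda_n')|=|\langle f_n,\bk_{\lambda_n'}\rangle|\le M\|\bk_{\lambda_n'}\|_{\varphi,2}$, so by \eqref{kernel-estimate} one has $|f_n(\lambda_n')|e^{-\varphi(\lambda_n')}\lesssim M/(1+|\lambda_n'|)$; thus the uniform lower bound $|f_n(\lambda_n')|e^{-\varphi(\lambda_n')}\gtrsim 1$ cannot ``persist'', and with your data $v^{(n)}_{\lambda_n'}=\|\bk_{\lambda_n'}\|_{\varphi,2}$ this quantity is exactly $\asymp 1/(1+|\lambda_n'|)\to 0$. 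If you then bound the local maximum in Lemma \ref{lem1}(ii) only by an absolute constant, the comparison reads $1/(1+|\lambda_n'|)\lesssim d_\rho(\lambda_n',\lambda_n'')$, which gives no contradiction. The repair is to keep the decay on \emph{both} sides: Lemma \ref{lem1}(iii) together with Cauchy--Schwarz (the disk $D_\rho(\lambda_n',\beta)$ has area $\asymp|\lambda_n'|^2$) gives $\max_{D_\rho(\lambda_n',\beta)}|f_n|e^{-\varphi}\lesssim \|f_n\|_{\varphi,2}/(1+|\lambda_n'|)\lesssim M/(1+|\lambda_n'|)$, so the Lipschitz estimate becomes $1/(1+|\lambda_n'|)\lesssim d_\rho(\lambda_n',\lambda_n'')\,M/(1+|\lambda_n'|)$; the weights cancel and you recover $1\lesssim M\,d_\rho(\lambda_n',\lambda_n'')\to 0$, the desired contradiction. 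So the approach is sound, but the $p=2$ bookkeeping must carry the factor $1/(1+|\lambda_n'|)$ explicitly on both sides rather than ``absorbing'' it into the constant.
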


We also need a uniform control of the sampling constant for small
perturbations.

\begin{cor}\label{pertp=2}
Let $\Lambda=\{\lambda_n\}$ be a separated sampling sequence.
Then there is a $\delta>0$ and $C>0$ depending only on $\delta$ such that
for every $\widetilde{\Lambda}=\{\tilde{\lambda_n}\}$ with
$d_{\rho}(\lambda_n,\tilde{\lambda}_n)\le \delta$,
we have
\[
 \frac{1}{C}\|f\|_{2,\varphi}
 \lesssim \| f|_{\widetilde{\Lambda}} \|_{2,\varphi,\widetilde{\Lambda}}.
\]
\end{cor}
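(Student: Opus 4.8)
The plan is to show that the sampling inequality for $\Lambda$ is \emph{stable} under $d_\rho$-small perturbations, with a quantitative bound on the loss. Since $\Lambda$ is separated and sampling for $\ahd$, it already contains (by Corollary \ref{lemma2}) a $d_\rho$-separated sampling sequence, so without loss of generality $\Lambda$ itself is $d_\rho$-separated with separation constant $d_\Lambda$; fix the sampling constant $A>0$ so that $\|f\|_{2,\varphi}^2 \le A \sum_{n} |f(\lambda_n)|^2/\bk_{\lambda_n}(\lambda_n)$ for all $f\in\ahd$. The idea is to estimate, for each $n$, the difference $|f(\lambda_n)| - |f(\tilde\lambda_n)|$ (suitably normalized) using the Bernstein-type Lemma \ref{lem1}(ii), and then sum the errors in $\ell^2$, absorbing them into the sampling term once $\delta$ is small.

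Here are the steps I would carry out. First, normalize: for $f\in\ahd$ with $\|f\|_{2,\varphi}=1$, by \eqref{kernel-estimate} the quantity $|f(\lambda)|^2/\bk_\lambda(\lambda) \asymp (1+|\lambda|^2)|f(\lambda)|^2 e^{-2\varphi(\lambda)}$, so the sampling sum is comparable to $\sum_n (1+|\lambda_n|^2)\,\big(|f(\lambda_n)|e^{-\varphi(\lambda_n)}\big)^2$. Thus it suffices to control the function $g(z) := |f(z)|e^{-\varphi(z)}$ at the perturbed points. Choose $\delta < \beta_0$ (from Lemma \ref{lem1}(ii)) and also smaller than, say, $d_\Lambda/2$ so the perturbed points remain comparably separated; note that $d_\rho(\lambda_n,\tilde\lambda_n)\le\delta$ forces $|\tilde\lambda_n|\asymp|\lambda_n|$ with constants independent of $n$. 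By Lemma \ref{lem1}(ii),
$$
\big|g(\tilde\lambda_n) - g(\lambda_n)\big| \lesssim \delta \max_{D_\rho(\lambda_n,\beta)} g,
$$
and by Lemma \ref{lem1}(iii) (applied at the point of $D_\rho(\lambda_n,\beta)$ where the max is attained, enlarging $\beta$ slightly) the maximum is controlled by $|\lambda_n|^{-2}\int_{D_\rho(\lambda_n,2\beta)} g\, dm$; by Cauchy--Schwarz on that disk (of area $\asymp |\lambda_n|^2$) this is $\lesssim |\lambda_n|^{-1}\big(\int_{D_\rho(\lambda_n,2\beta)} g^2\, dm\big)^{1/2}$. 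Hence
$$
(1+|\lambda_n|^2)\,\big(g(\lambda_n) - g(\tilde\lambda_n)\big)^2 \lesssim \delta^2 \int_{D_\rho(\lambda_n,2\beta)} g^2\, dm.
$$
Summing over $n$ and using that the dilated disks $D_\rho(\lambda_n,2\beta)$ have bounded overlap (a consequence of $d_\rho$-separation, for $\beta$ small), the right-hand side is $\lesssim \delta^2 \|g\|_{L^2}^2 \asymp \delta^2 \|f\|_{2,\varphi}^2 = \delta^2$. Therefore
$$
\Big(\sum_n (1+|\lambda_n|^2) g(\tilde\lambda_n)^2\Big)^{1/2} \ge \Big(\sum_n (1+|\lambda_n|^2) g(\lambda_n)^2\Big)^{1/2} - C_0\delta,
$$
and since the first term on the right is $\gtrsim \|f\|_{2,\varphi} = 1$ by sampling, choosing $\delta$ small enough that $C_0\delta$ is, say, half of that lower bound gives $\|f|_{\widetilde\Lambda}\|_{2,\varphi,\widetilde\Lambda} \gtrsim \|f\|_{2,\varphi}$, with a constant $C$ depending only on $\delta$ (through $A$, $d_\Lambda$, $\beta$). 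Rescaling to arbitrary $f$ by homogeneity finishes the proof.

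The main obstacle is the bounded-overlap bookkeeping: I need the comparison $|\tilde\lambda_n|\asymp|\lambda_n|$ and the fact that the slightly enlarged pseudohyperbolic disks $D_\rho(\lambda_n,2\beta)$ still have uniformly bounded overlap, both with constants that do not degenerate as the index grows — this is where $d_\rho$-separation is genuinely used and where one must be careful that "separated" in the $d_\rho$-metric translates (via the remark in the introduction that $D_\rho(\lambda,\beta)$ is comparable to the Euclidean disk $D(\lambda, q|\lambda|)$) into the Euclidean disjointness statement needed to sum the integrals. A secondary point is that Lemma \ref{lem1}(ii), (iii) are stated for a single radius $\beta$; passing from "max over $D_\rho(\lambda_n,\beta)$" to "integral over a slightly larger disk" requires choosing the radii in the right order at the outset (pick $\beta$ with $2\beta<\beta_0$, etc.), but this is routine once set up carefully.
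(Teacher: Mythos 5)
Your proposal is correct and follows essentially the same route as the paper: both use the sampling property of $\Lambda$ together with Lemma \ref{lem1}(ii) to control the perturbation error, Lemma \ref{lem1}(iii) to dominate the local maxima by $L^2$-integrals over disks that are disjoint (or of bounded overlap) thanks to $d_\rho$-separation, and then absorb the resulting $O(\delta)\|f\|_{2,\varphi}$ error term by taking $\delta$ small. Your use of the $\ell^2$ triangle inequality on $|f|e^{-\varphi}$ instead of the paper's direct estimate on differences of squares is only a cosmetic variation.
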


\begin{proof}
First note that when $d_{\rho}(\lambda_n,\tilde{\lambda}_n)\le \delta$,
then $1+|\lambda_n|^2\asymp 1+|\tilde{\lambda}_n|^2$ (constants are uniform for
$\delta\le\delta_0$).
Now
\begin{eqnarray*}
\lefteqn{\|f\|_{2,\varphi}^2\asymp\sum_n(1+|{\lambda}_n|^2)
 |f({\lambda}_n)|^2e^{-2\varphi({\lambda_n})}}\\
 && \le \sum_n(1+|{\lambda}_n|^2)
 |f(\tilde{\lambda}_n)|^2e^{-2\varphi(\tilde{\lambda_n})}
 +\sum_n(1+|{\lambda}_n|^2)
 \left||f(\tilde{\lambda}_n)|^2e^{-2\varphi(\tilde{\lambda_n})}
 - |f({\lambda}_n)|^2e^{-2\varphi({\lambda_n})}\right|\\
 &&\lesssim \sum_n(1+|\tilde{\lambda}_n|^2)
 |f(\tilde{\lambda}_n)|^2e^{-2\varphi(\tilde{\lambda_n})}
+\sum_n (1+|{\lambda}_n|^2)d_{\rho}(\lambda_n,\tilde{\lambda}_n)
 \max_{z\in D_{\rho}(\lambda_n,\delta)}|f(z)|^2e^{-2\varphi(z)},
\end{eqnarray*}
where we have used Lemma \ref{lem1} (constants only depend on $\delta$). 
Let $\delta_0$ such that the disks $D(\lambda_n,\delta_0|\lambda_n|)$ are disjoint.
Then, when $\delta>0$ is sufficiently small and $z \in D(\lambda_n, \delta|\lambda_n|)$, 
every disk $D(z,\delta_0 |z|/2)$ is 
contained in $D(\lambda_n,\delta_0|\lambda_n|)$.
Now, by Lemma \ref{lem1}, 
for every $z\in D(\lambda_n,\delta|\lambda_n|)$,
\[
 |f(z)|^2e^{-2\varphi(z)}\lesssim \frac{1}{\delta_0^2|z|^2}\int_{D(z,\delta_0|z|/2)}
 |f(w)|^2e^{-2\varphi(w)}dm(w)\le \frac{1}{\delta_0^2|z|^2}\int_{D(\lambda_n,\delta_0 |\lambda_n|)}
 |f(w)|^2e^{-2\varphi(w)}dm(w).
\]
Since $d_{\rho}(z,\lambda_n)\le 2\delta$, we have $1+|\lambda_n|^2\asymp
1+|z|^2$, and hence
\begin{eqnarray*}
 \|f\|_{2,\varphi}^2
\lesssim \sum_n(1+|\tilde{\lambda}_n|^2)
 |f(\tilde{\lambda}_n)|^2e^{-2\varphi(\tilde{\lambda})}
+\delta \sum_n \int_{D_{\rho}(\lambda_n,2\delta)}
 |f(w)|^2e^{-2\varphi(w)}dm(w),
\end{eqnarray*}
where the constants only depend on $\delta$.
It remains to choose $\delta$ sufficiently small.
\end{proof}

Recall that $\Gamma=\Gamma_\alpha=\{e^{\frac{n+1}{2\alpha}}
e^{i\theta_n}\}_{n\ge 0}$, $\theta_n\in \RR$,
is a complete interpolating sequence for $\ahd$. We will need the following 
simple estimates from \cite{BL}.

\begin{lem}\cite[Lemma 2.6]{BL}.
\label{FonctionBL} 
Let $\varphi(r)=\alpha (\log^+ r)^2$ and $\Gamma=\Gamma_{\alpha}$.
The product 
$$
 G(z)=\prod_{\gamma\in \Gamma}\Big(1-\frac{z}{\gamma}\Big)
$$
converges uniformly on compact sets in $\CC$ and satisfies 
$$
 |G(z)|\asymp e^{\varphi(z)}\frac{\dist(z,\Gamma)}{1+|z|^{3/2}}, \qquad z\in \CC,
$$
where the constants are independent of the choice of $\theta_n$. 
Here $\dist(z,\Gamma)$ denotes the Euclidean distance between $z$ and $\Gamma$.
Also
\[
 | G'(\gamma)|\asymp \frac{e^{\varphi(\gamma)}}{1+|\gamma|^{3/2}},\qquad
 \gamma\in\Gamma.
\]
\end{lem}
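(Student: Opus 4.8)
The plan is to exploit that $\Gamma$ is radial and lacunary: with $q=e^{1/(2\alpha)}>1$ one has $|\gamma_n|=q^{n+1}$, so $\sum_n|\gamma_n|^{-1}<\infty$; hence $\Gamma$ has genus zero, no Weierstrass factors are needed, and the product defining $G$ converges absolutely and uniformly on compact subsets of $\C$. For the two-sided estimate it suffices to treat $|z|=r$ large: when $r$ stays bounded, $G$ is an entire function whose only zeros in a fixed disk form a finite set of \emph{simple} zeros that, by lacunarity of the moduli, are mutually bounded away (so there are no near-coincidences), whence $|G(z)|\asymp\dist(z,\Gamma)$ there; and this matches the right-hand side since $e^{\varphi(z)}/(1+|z|^{3/2})\asymp 1$ for bounded $z$.

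Fix a constant $A=A(\alpha)\ge\max(2,q)$ large enough that the annulus $\{A^{-1}r<|w|<Ar\}$ always contains at least one point of $\{q^{n+1}:n\ge 0\}$ (it then contains at most a number of them bounded in terms of $\alpha$, uniformly in $r$), and decompose
\[
\log|G(z)|=\sum_n\log\Bigl|1-\frac{z}{\gamma_n}\Bigr|=\Sigma_{\mathrm{in}}+\Sigma_{\mathrm{mid}}+\Sigma_{\mathrm{out}},
\]
the three sums corresponding to $|\gamma_n|\le r/A$, $r/A<|\gamma_n|<Ar$, and $|\gamma_n|\ge Ar$. For the outer sum $|z/\gamma_n|\le 1/A\le 1/2$, so $\log|1-z/\gamma_n|=-\re(z/\gamma_n)+O(|z/\gamma_n|^2)$; since $\sum_{|\gamma_n|\ge Ar}r|\gamma_n|^{-1}$ and $\sum_{|\gamma_n|\ge Ar}r^2|\gamma_n|^{-2}$ are geometric series controlled by their first terms, $\Sigma_{\mathrm{out}}=O(1)$, with a bound depending only on $\alpha$ and, in particular, independent of the $\theta_n$. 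For the inner sum $|z-\gamma_n|\asymp r$, so $\log|1-z/\gamma_n|=\log(r/|\gamma_n|)+\log(|z-\gamma_n|/r)$, where the last term is $O(|\gamma_n|/r)$ and sums to $O(1)$; using $\log|\gamma_n|=(n+1)/(2\alpha)$ and writing $t=\log r$, the main part is $\sum_{n+1\le 2\alpha t+O(1)}\bigl(t-(n+1)/(2\alpha)\bigr)$, an arithmetic progression whose sum, once the $O(1)$ rounding of the upper limit is tracked, equals $\alpha t^2-\tfrac12 t+O(1)=\varphi(r)-\tfrac12\log r+O(1)$.

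The middle range contains a number of indices bounded in terms of $\alpha$ only, and for $n$ there $|\gamma_n|\asymp r$. Let $\gamma_{n_0}$ be a point of $\Gamma$ closest to $z$ among the middle indices; a short case distinction (according as $\dist(z,\Gamma)<r/2$ or not) gives $|z-\gamma_{n_0}|\asymp\dist(z,\Gamma)$, while for every other middle index $|z-\gamma_n|\ge\tfrac12|\gamma_n-\gamma_{n_0}|\ge\tfrac12\bigl||\gamma_n|-|\gamma_{n_0}|\bigr|\gtrsim r$ — here it is the lacunarity of the moduli, not the unknown arguments, that produces the separation. Hence $\prod_{\mathrm{mid}}|1-z/\gamma_n|=\prod_{\mathrm{mid}}|z-\gamma_n|/\prod_{\mathrm{mid}}|\gamma_n|\asymp\dist(z,\Gamma)/r$, i.e. $\Sigma_{\mathrm{mid}}=\log\dist(z,\Gamma)-\log r+O(1)$. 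Adding the three contributions yields $\log|G(z)|=\varphi(r)-\tfrac32\log r+\log\dist(z,\Gamma)+O(1)$, which is the claimed estimate for large $r$. For $G'$, from $G'(\gamma_m)=-\gamma_m^{-1}\prod_{n\ne m}(1-\gamma_m/\gamma_n)$ one repeats the same splitting at $z=\gamma_m$, the only change being that the middle factor with index $m$ is absent; the surviving middle factors are $\asymp 1$, so $\prod_{n\ne m}|1-\gamma_m/\gamma_n|\asymp e^{\varphi(\gamma_m)}|\gamma_m|^{-1/2}$ and therefore $|G'(\gamma_m)|\asymp e^{\varphi(\gamma_m)}/(1+|\gamma_m|^{3/2})$.

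The routine parts are the geometric-series bookkeeping in $\Sigma_{\mathrm{out}}$, the error estimates in $\Sigma_{\mathrm{in}}$, and the compactness argument for small $z$. \emph{The main obstacle} is getting the exponent exactly right: a crude treatment of $\Sigma_{\mathrm{in}}$ only pins down $e^{\varphi(r)}$ up to a factor $r^{O(1)}$, so one must keep the $-\tfrac12 t$ term produced by the summation of the arithmetic progression, together with the sum of the $\log(r/|\gamma_n|)$'s, \emph{and} identify the additional $-\log r$ coming from the middle block, so that the three pieces combine to precisely the power $|z|^{-3/2}$. A secondary but essential point, which is what makes the constants independent of the $\theta_n$, is the observation that middle points are pairwise $\gtrsim r$-separated merely because their moduli $q^{n+1}$ are, so that all comparisons in $\Sigma_{\mathrm{mid}}$ involve only $|\gamma_n|$ and the distances $|z-\gamma_n|$.
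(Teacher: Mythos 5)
Your proof is correct: the three-block decomposition, the arithmetic-progression summation giving $\alpha t^2-\tfrac12 t$, the extra $-\log r$ from the near block, and the observation that all separation estimates use only the lacunary moduli (hence independence of the $\theta_n$) are exactly the standard Borichev--Lyubarskii computation. The paper itself does not reprove this lemma (it cites \cite[Lemma 2.6]{BL}), but your argument coincides with the computation the paper carries out for its two-sided analogue (Lemma \ref{lemmeBorLyu}) and in \eqref{estimLogF}, so there is nothing essentially different to compare.
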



\begin{lem}
\label{lemma3} 
Let $\Lambda\subset \CC$. Then 
\begin{equation}
\label{unionsepare}
\|f\|_{2,\varphi,\Lambda}\leq c(\Lambda)\|f\|_{2,\varphi},\qquad f\in \ahd,
\end{equation}
if and only if $\Lambda$ is  a finite union of $d_\rho$-separated subsets.
\end{lem}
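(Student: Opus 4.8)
The plan is to prove the two implications separately, using the kernel estimate \eqref{kernel-estimate} and Lemma~\ref{lem1} as the main tools. First I would establish the easy direction, namely that if $\Lambda$ is a finite union of $d_\rho$-separated subsets then the sampling-type sum is controlled by the norm. Since the left-hand side of \eqref{unionsepare} is subadditive under splitting $\Lambda$ into finitely many pieces, it suffices to prove the inequality for a single $d_\rho$-separated sequence. For such a sequence, using $\|f\|_{2,\varphi,\Lambda}^2 \asymp \sum_n (1+|\lambda_n|^2)|f(\lambda_n)|^2 e^{-2\varphi(\lambda_n)}$ (from \eqref{kernel-estimate}), I would bound each term $|f(\lambda_n)|^2 e^{-2\varphi(\lambda_n)}$ by a local mean value estimate: Lemma~\ref{lem1}(iii) gives
\[
 |f(\lambda_n)|^2 e^{-2\varphi(\lambda_n)} \lesssim \frac{1}{|\lambda_n|^2}\int_{D(\lambda_n,\,c|\lambda_n|)} |f(w)|^2 e^{-2\varphi(w)}\,dm(w),
\]
and since $d_\rho$-separation means the disks $D(\lambda_n,c|\lambda_n|)$ are pairwise disjoint (as noted in the introduction), summing and using $1+|\lambda_n|^2 \asymp 1+|w|^2$ on each disk yields $\sum_n (1+|\lambda_n|^2)|f(\lambda_n)|^2 e^{-2\varphi(\lambda_n)} \lesssim \|f\|_{2,\varphi}^2$. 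This is essentially the computation already carried out inside the proof of Corollary~\ref{pertp=2}.

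For the converse, I would assume \eqref{unionsepare} holds and show $\Lambda$ decomposes into finitely many $d_\rho$-separated subsets. The natural approach is to test \eqref{unionsepare} against the reproducing kernels $\bk_\lambda$ (or against $G$-type functions from Lemma~\ref{FonctionBL}) to conclude that the number of points of $\Lambda$ that can cluster in any single $d_\rho$-ball $D_\rho(z,\beta)$ is bounded by a constant $M$ depending only on $c(\Lambda)$ and $\beta$. Concretely, fixing a small $\beta$ and a point $z_0$, one builds a test function $f$ (for instance $\bk_{z_0}$, or a suitably normalized product) which is of size $\asymp \|\bk_{z_0}\|_{2,\varphi} \cdot \|\bk_{z_0}\|_{2,\varphi}^{-1}\cdot(\text{something} \asymp 1)$ at every point of $\Lambda \cap D_\rho(z_0,\beta)$ by Lemma~\ref{lem1}(i), while $\|f\|_{2,\varphi}\asymp 1$; then \eqref{unionsepare} forces $\Card(\Lambda\cap D_\rho(z_0,\beta)) \leq M$. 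Once this uniform bound is in hand, a standard combinatorial/greedy colouring argument (colour the points of $\Lambda$ so that two points of the same colour are never $d_\rho$-close, which is possible with a bounded number of colours precisely because every ball contains at most $M$ points) splits $\Lambda$ into at most $C(M)$ subsets each of which is $d_\rho$-separated with separation constant comparable to $\beta$.

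The main obstacle I anticipate is the converse direction, specifically producing a test function that is simultaneously (i) bounded in norm independently of $z_0$ and (ii) uniformly bounded below in modulus, after weighting by $e^{-\varphi}$, on all of $\Lambda \cap D_\rho(z_0,\beta)$ at once. Using the single reproducing kernel $\bk_{z_0}$ handles the point $z_0$ but one must invoke the Bernstein-type propagation in Lemma~\ref{lem1}(i) to transfer the lower bound to the whole ball; one has to check that the constants there ($\beta$ depending on $c$, and the resulting lower bound $\tfrac{c}{2}e^{-\alpha\pi^2}$) are indeed uniform in $z_0$, which they are since $\varphi$ is radial and the estimates in Lemma~\ref{lem1} are stated uniformly. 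The rest — the mean-value estimates in the forward direction and the colouring argument in the converse — is routine. This type of argument is classical for Fock and Bergman spaces and is exactly what the cited references \cite[Chapter 5]{HKZ}, \cite[Chapter 4]{Z} carry out in the analogous settings, so I would follow that template.
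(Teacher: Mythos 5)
Your forward direction is exactly the paper's (Lemma \ref{lem1}(iii) plus disjointness of the disks $D(\lambda,c|\lambda|)$), and your overall strategy for the converse — test \eqref{unionsepare} against unit-norm, well-localized functions to bound $\Card(\Lambda\cap D_\rho(z_0,\beta))$ uniformly, then split $\Lambda$ by a colouring argument — is also the paper's strategy. The problem is in the specific test-function step you propose.

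The gap is the uniformity you assert for the application of Lemma \ref{lem1}(i). That lemma requires $\|f\|_{\varphi,\infty}=1$ and a lower bound $|f(z_0)|e^{-\varphi(z_0)}\ge c$ with $c$ \emph{independent of} $z_0$, and the radius $\beta$ it produces depends on $c$. For the normalized kernel $\K_{z_0}$ the only estimate available in the paper is the on-diagonal one \eqref{kernel-estimate}, which gives $|\K_{z_0}(z_0)|e^{-\varphi(z_0)}\asymp (1+|z_0|)^{-1}$; nothing in the paper shows that $\sup_z|\K_{z_0}(z)|e^{-\varphi(z)}$ is comparably small (that would be an off-diagonal decay estimate which is not stated). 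So after sup-normalization the constant $c$ may decay like $(1+|z_0|)^{-1}$, the radius $\beta$ degenerates as $|z_0|\to\infty$, and a uniform bound on $\Card(\Lambda\cap D_\rho(z_0,\beta(z_0)))$ with shrinking $d_\rho$-radii does \emph{not} give a decomposition into finitely many $d_\rho$-separated subsets. Saying the constants are uniform ``since $\varphi$ is radial and Lemma \ref{lem1} is stated uniformly'' does not address this: the non-uniformity sits in the hypothesis $|f(z_0)|e^{-\varphi(z_0)}\ge c$, not in the lemma. The step is repairable in two ways. Either keep $f=\K_{z_0}$ with its $\ahd$-normalization and use part (ii) of Lemma \ref{lem1} at the correct scale: on $D_\rho(z_0,\beta)$ one has $\max|f|e^{-\varphi}\lesssim (1+|z_0|)^{-1}$ by Cauchy--Schwarz and \eqref{kernel-estimate}, so for a small but uniform $\beta$ the lower bound $|f(\lambda)|e^{-\varphi(\lambda)}\gtrsim (1+|z_0|)^{-1}$ propagates to the whole ball, and the weight $1+|\lambda|^2\asymp 1+|z_0|^2$ in $\|\cdot\|_{2,\varphi,\Lambda}$ restores a constant, giving $\Card(\Lambda\cap D_\rho(z_0,\beta))\lesssim c(\Lambda)^2$. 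Or do what the paper does: test against the explicit functions $G_\gamma(z)=\frac{G(z)}{(z-\gamma)G'(\gamma)}\,\frac{e^{\varphi(\gamma)}}{\gamma}$, $\gamma\in\Gamma$, which have uniformly bounded $\ahd$-norms and, by the pointwise estimates of Lemma \ref{FonctionBL}, satisfy $|G_\gamma(\lambda)|^2e^{-2\varphi(\lambda)}(1+|\lambda|^2)\gtrsim 1$ on all of $D_\rho(\gamma,\beta)$ with no propagation lemma needed; letting the angles $\theta_n$ in \eqref{bls} vary makes the balls $D_\rho(\gamma,\beta)$ cover $\C$, after which your colouring step finishes the proof.
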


\begin{proof} If $\Lambda$ is  a finite union of $d_\rho$-separated subsets, then \eqref{unionsepare} follows from 
Lemma \ref{lem1} (iii).  In the opposite direction, let $\Gamma$ and $G$ be as
in Lemma \ref{FonctionBL}, and put   
$$
G_\gamma(z)=\frac{G(z)}{(z-\gamma)G'(\gamma)} \frac{e^{\varphi(\gamma)}}{\gamma}.
$$
Then
$$
 |G_\gamma(z)| \asymp \frac{|\gamma|^{1/2}}{1+|z|^{3/2}}e^{\varphi(z)}
 \frac{\dist(z,\Gamma)}{|z-\gamma|}.
 $$
The function $G_\gamma$ belongs to $\ahd$ and $\sup_{\gamma}\|G_\gamma\|_{2,\varphi}\lesssim 1$ (see \cite[Proof of Theorem 2.5]{BL}).
Hence, by Lemma \ref{FonctionBL}, we have 
\begin{eqnarray*}
1\gtrsim\|G_\gamma\|_{2,\varphi}&\gtrsim&\|G_\gamma\|_{2,\varphi,\Lambda}\geq 
 \sum_{\lambda\in\Lambda\cap D_\rho(\gamma,\beta)}|G_\gamma(\lambda)|^2e^{-2\varphi(\lambda)}(1+|\lambda|^2)\\
&\geq & c \Card(\Lambda\cap D_\rho(\gamma,\beta))
\end{eqnarray*}
(note that $\dist(\lambda,\Gamma)\asymp |\lambda-\gamma|$ since
$\Gamma$ is separated and we can choose $\beta$ such that $D_{\rho}(\gamma,\beta)$
stays sufficiently far from $\Gamma\setminus\{\gamma\}$).
So 
$$
 \sup_{\gamma\in \Gamma}\Card(\Lambda\cap D_\rho(\gamma,\beta))<\infty
$$
for arbitrary  real numbers $\theta_n$ (we can pick $\beta$ such that that 
$D_{\rho}(\gamma,\beta)$ covers $\CC$ when $\gamma$ runs through $\Gamma$ and
$\theta_n$ through $\RR$).  
Hence $\Lambda$ is a finite union of $d_\rho$-separated sequence.
\end{proof}

 \begin{lem}
\label{passageinfty2} 
If $\Lambda$ is $d_\rho$-separated and sampling for  $\mathcal{F}_{(1+\varepsilon)\varphi}^{\infty}$
for some $\varepsilon>0$, then $\Lambda$ is sampling for $ \mathcal{F}_{\varphi}^{2}$.
 \end{lem}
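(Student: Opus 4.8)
The plan is to deduce the $L^2$-sampling inequality from the $L^\infty$-sampling inequality for the slightly larger weight $(1+\varepsilon)\varphi$ by a standard ``sum over an annulus'' argument, exploiting the fact that the $d_\rho$-separated disks $D(\lambda,c|\lambda|)$, $\lambda\in\Lambda$, are pairwise disjoint. First I would fix $f\in\ahd$ with $\|f\|_{2,\varphi}=1$; the aim is to bound $\|f\|_{2,\varphi}$ from above by $\|f\|_{2,\varphi,\Lambda}$ up to a constant (the reverse inequality being Lemma \ref{lem1}(iii), i.e. the ``finite union of separated sequences'' direction of Lemma \ref{lemma3}). The key point is to pass from the $L^2$ mass of $f$ to a pointwise supremum. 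By the sub-mean-value estimate Lemma \ref{lem1}(iii), for each $z$ we have $|f(z)|^2 e^{-2\varphi(z)}\lesssim |z|^{-2}\int_{D_\rho(z,\beta)}|f|^2 e^{-2\varphi}\,dm$, and since $\|f\|_{2,\varphi}=1$ this gives $|f(z)|e^{-\varphi(z)}\lesssim 1/(1+|z|)$. Multiplying by $(1+|z|)^{\varepsilon'}$ for a suitable $\varepsilon'>0$ with $(1+\varepsilon')\varphi$ dominated by $(1+\varepsilon)\varphi$ on a neighbourhood of infinity... actually the cleaner route is: show directly that $f\in\mathcal{F}^\infty_{(1+\varepsilon)\varphi}$ with norm $\lesssim 1$. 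Indeed $e^{-\varepsilon\varphi(z)}=e^{-\varepsilon\alpha(\log^+|z|)^2}$ decays faster than any power of $|z|$, so $|f(z)|e^{-(1+\varepsilon)\varphi(z)}=\big(|f(z)|e^{-\varphi(z)}\big)e^{-\varepsilon\varphi(z)}\lesssim (1+|z|)^{-1}e^{-\varepsilon\alpha(\log^+|z|)^2}\lesssim 1$, uniformly. Hence $\|f\|_{(1+\varepsilon)\varphi,\infty}\lesssim \|f\|_{2,\varphi}$.

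Now apply the hypothesis: since $\Lambda$ is sampling for $\mathcal{F}^\infty_{(1+\varepsilon)\varphi}$, there is $L>0$ with
\[
\|f\|_{(1+\varepsilon)\varphi,\infty}\le L\sup_{\lambda\in\Lambda}|f(\lambda)|e^{-(1+\varepsilon)\varphi(\lambda)}.
\]
So there exists $\lambda_*\in\Lambda$ (or a sequence approaching the sup) with $|f(\lambda_*)|e^{-(1+\varepsilon)\varphi(\lambda_*)}\gtrsim \|f\|_{(1+\varepsilon)\varphi,\infty}\gtrsim 1$, i.e. $|f(\lambda_*)|e^{-\varphi(\lambda_*)}\gtrsim e^{\varepsilon\varphi(\lambda_*)}\ge 1$. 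This already shows $\|f\|_{2,\varphi,\Lambda}^2\ge |f(\lambda_*)|^2 e^{-2\varphi(\lambda_*)}(1+|\lambda_*|^2)^{-1}\cdot(1+|\lambda_*|^2)\gtrsim$ — wait, one must be careful: the weight in $\|\cdot\|_{2,\varphi,\Lambda}$ is $|f(\lambda)|^2/\bk_\lambda(\lambda)\asymp |f(\lambda)|^2 e^{-2\varphi(\lambda)}(1+|\lambda|^2)$ by \eqref{kernel-estimate}. So the single term from $\lambda_*$ contributes $\gtrsim (1+|\lambda_*|^2)|f(\lambda_*)|^2 e^{-2\varphi(\lambda_*)}\gtrsim (1+|\lambda_*|^2)e^{2\varepsilon\varphi(\lambda_*)}$, which is huge — certainly $\gtrsim 1 = \|f\|_{2,\varphi}^2$. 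Thus we directly obtain $\|f\|_{2,\varphi}^2=1\lesssim \|f\|_{2,\varphi,\Lambda}^2$, which is exactly the nontrivial sampling inequality; homogeneity removes the normalization.

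The only genuine step requiring care, and the one I would expect to be the main (mild) obstacle, is the first one: verifying uniformly over the \emph{unit sphere} of $\ahd$ that $\|f\|_{(1+\varepsilon)\varphi,\infty}\lesssim\|f\|_{2,\varphi}$, i.e. the continuous embedding $\ahd\hookrightarrow\mathcal{F}^\infty_{(1+\varepsilon)\varphi}$. This follows from Lemma \ref{lem1}(iii) applied with radius $\beta$ small and fixed: $|f(z)|^2e^{-2\varphi(z)}\lesssim |z|^{-2}\int_{D_\rho(z,\beta)}|f|^2e^{-2\varphi}\,dm\le |z|^{-2}\|f\|_{2,\varphi}^2$, hence $|f(z)|e^{-\varphi(z)}\lesssim (1+|z|)^{-1}\|f\|_{2,\varphi}$ (the $1+$ handles small $|z|$ via the maximum principle on a fixed disk), and then the super-polynomial decay of $e^{-\varepsilon\varphi}$ absorbs the $(1+|z|)^{-1}$ to yield boundedness; no separation of $\Lambda$ is even needed for this part. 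The $d_\rho$-separation hypothesis is not actually used in this direction — it is recorded because it is needed together with the conclusion (via Lemma \ref{lemma3}, Lemma \ref{lem1}(iii)) to get the two-sided sampling equivalence — so I would simply note that the reverse inequality $\|f\|_{2,\varphi,\Lambda}\lesssim\|f\|_{2,\varphi}$ holds for any $d_\rho$-separated (or finite union thereof) $\Lambda$ by Lemma \ref{lem1}(iii), completing the claim that $\Lambda$ is sampling for $\ahd$.
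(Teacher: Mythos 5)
There is a genuine gap, and it is fatal to the argument as written. Your first step (the continuous embedding $\ahd\hookrightarrow\mathcal{F}^\infty_{(1+\varepsilon)\varphi}$, i.e.\ $\|f\|_{(1+\varepsilon)\varphi,\infty}\lesssim\|f\|_{2,\varphi}$) is correct, but in the second step you silently use the \emph{reverse} inequality: from $\|f\|_{2,\varphi}=1$ you write $\|f\|_{(1+\varepsilon)\varphi,\infty}\gtrsim 1$, and everything after that hinges on it. This lower bound is false: the embedding is bounded but very far from bounded below. For instance, take $f_n(z)=z^n/\|z^n\|_{2,\varphi}$. A Laplace-type computation gives $\|z^n\|_{2,\varphi}\approx e^{(n+1)^2/(4\alpha)}$ (up to lower-order factors), while
\[
\sup_{z\in\C}|z|^n e^{-(1+\varepsilon)\alpha(\log^+|z|)^2}=e^{\frac{n^2}{4(1+\varepsilon)\alpha}},
\]
so $\|f_n\|_{(1+\varepsilon)\varphi,\infty}\approx e^{-c\,n^2}$ for some $c=c(\varepsilon,\alpha)>0$ even though $\|f_n\|_{2,\varphi}=1$. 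Consequently, applying the $\mathcal{F}^\infty_{(1+\varepsilon)\varphi}$-sampling inequality to $f$ itself only tells you that $\sup_\lambda|f(\lambda)|e^{-(1+\varepsilon)\varphi(\lambda)}$ dominates a quantity that may be exponentially small compared with $\|f\|_{2,\varphi}$; you cannot extract any single $\lambda_*$ with $|f(\lambda_*)|e^{-\varphi(\lambda_*)}\gtrsim e^{\varepsilon\varphi(\lambda_*)}$, and the concluding estimate $1\lesssim\|f\|_{2,\varphi,\Lambda}^2$ does not follow. (The reverse inequality $\|f\|_{2,\varphi,\Lambda}\lesssim\|f\|_{2,\varphi}$ for separated $\Lambda$, which you handle via Lemma \ref{lem1}(iii), is indeed the easy part.)

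The missing idea is that the $L^\infty$-sampling hypothesis must be used in a much stronger, \emph{linearized} form rather than as a single inequality applied to $f$. The paper runs a Beurling-type duality argument: sampling for $\mathcal{F}^\infty_{(1+\varepsilon)\varphi}$ yields, for each $z$, coefficients $g(z,\lambda)$ with $\sup_z\sum_\lambda|g(z,\lambda)|<\infty$ and a reproduction formula
\[
e^{-(1+\varepsilon)\varphi(z)}h(z)=\sum_{\lambda\in\Lambda}e^{-(1+\varepsilon)\varphi(\lambda)}h(\lambda)\,g(z,\lambda),
\]
valid on a $c_0$-type subspace. This is then applied not to $f\in\ahd$ but to $h=fP_z$, where $P_z$ is a peak function built from the generating function of $\Gamma_{\varepsilon\alpha}$ with $|P_z(z)|\asymp 1$ and controlled growth $e^{\varepsilon\varphi}$; the extra factor $P_z$ exactly compensates the $e^{-\varepsilon\varphi}$ loss that destroys your argument. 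Cauchy--Schwarz in $\lambda$ and the uniform bound on the integral $I(\lambda)$ of \eqref{integrallambda} then give $\|f\|_{2,\varphi}^2\lesssim\|f\|_{2,\varphi,\Lambda}^2$. So the statement is true, but it needs this localization-by-peak-function mechanism (or something equivalent); the direct comparison of norms you propose cannot work because the two norms are not comparable on $\ahd$.
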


\begin{proof} We are going to use the same Beurling duality argument as in 
\cite[Theorem 36]{MMO}
(see also \cite[pp. 348-358]{B}, \cite[pp.36-37]{S3}). Let 
$$
 \mathcal{F}_{\varphi}^{\infty,0}=\big\{f\in \mathcal{F}_{\varphi}^{\infty} \text{ : } 
 \lim_{|z|\to \infty}|f(z)|e^{-(1+\varepsilon)\varphi(z)}=0\big\}.
$$
By the sampling property,
the operator family $T_z:\{f(\lambda)\}_{\lambda\in\Lambda}\longmapsto
e^{-(1+\eps)\varphi(z)}f(z)$, $z\in\CC$, is uniformly bounded from $\{f|_\Lambda:\, 
f\in \mathcal{F}_{\varphi}^{\infty,0}\}\subset c_0$ to $\CC$.
Hence, by duality, there exists a family  $(g(z,\lambda))_{\lambda\in \Lambda}$ such that  
$$e^{-(1+\varepsilon)\varphi(z)}f(z)=\sum_{\lambda\in \Lambda}e^{-(1+\varepsilon)\varphi(\lambda)}f(\lambda)g(z,\lambda), \qquad f\in \mathcal{F}_{(1+\varepsilon)\varphi}^{\infty,0}.$$
and  $\sup_z\sum_\lambda|g(z,\lambda)|<\infty$. 
Let now  $\Gamma=\Gamma_{{\varepsilon}\alpha}=\{ e^{\frac{n+1}{2\alpha{\varepsilon}}}e^{i\theta_n}\}$, 
and consider the function $G\in  \mathcal{F}_{{\varepsilon}\varphi}^\infty$ of  Lemma \ref{FonctionBL}  vanishing on $\Gamma$. 
When 
$e^{\frac{n+1}{2\alpha{\varepsilon}}}\leq |z|\leq  e^{\frac{n+2}{2\alpha{\varepsilon}}}$,
let $\gamma_z= e^{\frac{n+2}{2\alpha{\varepsilon}}}e^{i\theta_n}$, so that
\[
  e^{-1/(2\alpha\varepsilon)}\le |{z}/{\gamma_z}|\le 1. 
\]
Set 
$$
P_z(w)=\frac{G(w)}{(w-\gamma_z)G'(\gamma_z)}\frac{w^2}{z^2}.
$$
For $w\in\CC$ we have 
\begin{equation}\label{estimate-peak}
|P_z(w)|\asymp e^{{\varepsilon}(\varphi(w)-\varphi(\gamma_z))}\frac{|w|^{1/2}}{|z|^{1/2}} \frac{\dist(w,\Gamma)}{|w-\gamma_z|}
\lesssim e^{{\varepsilon}(\varphi(w)-\varphi(z))}\frac{|w|^{1/2}}{|z|^{1/2}} 
\frac{\dist(w,\Gamma)}{|w-\gamma_z|}.
\end{equation}
Given  $f\in \ahd$,  by Lemma \ref{lem1} (iii) and  \eqref{estimate-peak}  we have 
$w\mapsto f(w)P_z(w)\in  
\mathcal{F}_{(1+{\varepsilon})\varphi}^{\infty,0}$ and hence
$$
e^{-(1+\varepsilon)\varphi(z)} f(z) P_z(z)=\sum_{\lambda\in \Lambda}e^{-(1+\varepsilon)\varphi(\lambda)} f(\lambda)P_z(\lambda)g(z,\lambda).
$$
Since  $|P_z(z)|\asymp 1$, again by  \eqref{estimate-peak} we obtain that 
$$
|f(z)|e^{-\varphi(z)}
\lesssim 
\sum_{\lambda\in \Lambda} |f(\lambda)| e^{-\varphi(\lambda)}\frac{|\lambda|^{1/2}}{|z|^{1/2}} \frac{\dist(\lambda,\Gamma)}{|\lambda-\gamma_z|}|g(z,\lambda)|.
$$
Since  $\sum_\lambda|g(z,\lambda)|<\infty$, H\"older's inequality 
and \eqref{kernel-estimate} give us that 
\begin{eqnarray}\label{eqsamplingf}
 |f(z)|^2e^{-2\varphi(z)}&\lesssim& 
 \bigg( \sum_{\lambda\in \Lambda}|f(\lambda)|^2e^{-2\varphi(\lambda)} 
 \frac{|\lambda|}{|z|} \frac{\dist(\lambda,\Gamma)^2}{|\lambda-\gamma_z|^2}\bigg)
 \bigg(\sum_{\lambda\in \Lambda}|g(z,\lambda)|^2\bigg) \nonumber\\
 &\lesssim & 
 \bigg(\sum_{\lambda\in \Lambda}\frac{|f(\lambda)|^2}{\bk_\lambda(\lambda)} 
 \frac{1}{|\lambda z|} \frac{\dist(\lambda,\Gamma)^2}{|\lambda-\gamma_z|^2} \bigg)
 \bigg( \sum_{\lambda\in \Lambda}|g(z,\lambda)|\bigg)^2 \nonumber\\ 
 &\lesssim & 
 \sum_{\lambda\in \Lambda}\frac{|f(\lambda)|^2}{\bk_\lambda(\lambda)} 
 \frac{1}{|\lambda z|} \frac{\dist(\lambda,\Gamma)^2}{|\lambda-\gamma_z|^2}. 
\end{eqnarray}
It remains to verify that
\begin{equation}\label{integrallambda}
I(\lambda)=\frac{1}{|\lambda|}\int_\CC
\frac{\dist(\lambda,\Gamma)^2}{|\lambda-\gamma_z|^2|z|}dm(z)<\infty
\end{equation}
uniformly in $\lambda$, since by \eqref{eqsamplingf}, we then 
obtain the sampling inequality 
$$
\|f\|_{2,\varphi}^{2}\lesssim 
\sup_\lambda I(\lambda)\|f\|_{2,\varphi,\Lambda}^{2}.  
$$

We will now show \eqref{integrallambda}. 
Since $\dist (\lambda,\Gamma)\le |\lambda-\gamma_z|$ and $|\gamma_z| \asymp|z|$, we have 
$$
 \frac{1}{|\lambda|}\int_{z:\; |\gamma_z|<2|\lambda|}
 \frac{\dist(\lambda,\Gamma)^2}{|\lambda-\gamma_z|^2|z|}dm(z)
 \lesssim\frac{1}{|\lambda|}\int_{|z|\lesssim |\lambda|}\frac{1}{|z|}dm(z)\lesssim 1.
$$
If $|\gamma_z|\geq 2|\lambda|$, then $|\lambda-\gamma_z|\geq |\gamma_z|/2\asymp |z|$ 
and $\dist(\lambda,\Gamma)\lesssim|\lambda|$, so that 
$$
 \int_{z:\; 2|\lambda|\leq  |\gamma_z|}\frac{1}{|\lambda||z|} 
 \frac{\dist(\lambda,\Gamma)^2}{|\lambda-\gamma_z|^2}dm(z)
 \lesssim|\lambda|\int_{|\lambda|\lesssim |z|}\frac{1}{|z|^3}dm(z)\lesssim1.
$$
Hence \eqref{integrallambda} is established and the proof is complete.
\end{proof}

\subsection{De Branges spaces}

In order to investigate the Hilbertian counterpart of the above result we will identify
the Fock space with a de Branges space.
Let $G$ be the generating function associated with the sequence 
$\Gamma$ defined by \eqref{bls} with $\theta_n=-\pi/2$. 
Recall that the de Branges space associated with $G$ (see \cite{br}) is given by
\[ 
 \mathcal{H}(G):=\{f \text{ entire}:f/G\in H^2(\CC^+)\text{ and }f^*/G\in H^2(\CC^+)\},
\]
where $f^*(z)=\overline{f(\bar z)}$ and $H^2(\mathbb{C}^+)$ 
stands for the standard Hardy space.  The space $\mathcal{H}(G)$ is normed by 
\[
 \|f\|_{\mathcal{H}(G)}^2:=\int_{\RR}\left|\frac{f(x)}{G(x)}\right|^2dx,
 \quad f\in \mathcal{H}(G).
\]
We have the following result.

\begin{prop}
\label{asdb}
The space $\ahd$ is a de Branges space: $\ahd=\mathcal{H}(G)$.
\end{prop}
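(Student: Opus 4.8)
The plan is to show the two-sided inclusion $\ahd = \mathcal{H}(G)$ with equivalence of norms, using the factorization of $G$ and the growth estimate from Lemma \ref{FonctionBL}. First I would establish the inclusion $\ahd \subset \mathcal{H}(G)$. Given $f \in \ahd$, I want to show $f/G \in H^2(\CC^+)$ (and symmetrically $f^*/G \in H^2(\CC^+)$, which follows since $G^* = \pm G$ up to a unimodular constant because the zero set $\Gamma$ with $\theta_n = -\pi/2$ is symmetric, i.e.\ purely imaginary). The function $h = f/G$ is meromorphic on $\CC^+$; by Lemma \ref{FonctionBL}, $|G(z)| \gtrsim e^{\varphi(z)}\dist(z,\Gamma)/(1+|z|^{3/2})$, and since $\Gamma \subset i\RR$ lies on the imaginary axis, for $z$ in any horizontal strip of $\CC^+$ staying away from $i\RR$ (say $\re z$ bounded below) there are no poles and one controls $|h(z)|$. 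The key point is to verify that $h$ has a harmonic majorant on $\CC^+$, or equivalently to bound $\sup_{y>0}\int_\RR |h(x+iy)|^2\,dx$. I would do this by relating the line integrals of $|f/G|^2$ to the area integral $\int_\CC |f|^2 e^{-2\varphi}\,dm$; concretely, Lemma \ref{lem1}(iii) (the sub-mean-value property) lets one dominate $|f(z)|^2 e^{-2\varphi(z)}$ by a local average, and the lower bound on $|G|$ converts $\int_\RR |f(x+iy)/G(x+iy)|^2\,dx$ into something comparable to a slab integral of $|f|^2 e^{-2\varphi}$ over $\{|\re w| \lesssim |\Im w|\}$, which is finite. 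Care is needed near $x = 0$ where the estimate on $|G|$ degenerates, but there the factor $\dist(z,\Gamma)^{-2}$ is integrable against $dx$ over a bounded interval.

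Second I would prove the reverse inclusion $\mathcal{H}(G) \subset \ahd$. Here the natural tool is that $\{\K_\gamma\}_{\gamma \in \Gamma}$ (equivalently the system $\{G_\gamma\}$ built from $G$ in Lemma \ref{lemma3}) is an orthogonal-type basis: since $\Gamma$ is a complete interpolating sequence for $\ahd$, the reproducing kernels form a Riesz basis, and the associated biorthogonal system is exactly $\{G(z)/((z-\gamma)G'(\gamma))\}$. On the de Branges side, the analogous system $\{G(z)/((z-\gamma)G'(\gamma))\}_{\gamma\in\Gamma}$ is an orthogonal basis of $\mathcal{H}(G)$ precisely when $\Gamma \cap \RR = \varnothing$ and $\sum 1/|G'(\gamma)|^2 \cdot (\text{weights})$ matches — this is the classical sampling formula in de Branges spaces. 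Thus I would show that $f \mapsto (f(\gamma)/\|\bk_\gamma\|)_\gamma$ is, up to the uniform comparability $\|\bk_\gamma\|^2_{\varphi,2} \asymp e^{2\varphi(\gamma)}/(1+|\gamma|^2)$ from \eqref{kernel-estimate} and $|G'(\gamma)| \asymp e^{\varphi(\gamma)}/(1+|\gamma|^{3/2})$ from Lemma \ref{FonctionBL}, the same sequence of coefficients whether computed in $\ahd$ or in $\mathcal{H}(G)$. Both norms are therefore equivalent to $\sum_\gamma |f(\gamma)|^2 \cdot (1+|\gamma|^2) e^{-2\varphi(\gamma)}$ (one must check the power of $|\gamma|$ matches: the $\ahd$ side gives weight $1+|\gamma|^2$ via \eqref{kernel-estimate}, and the $\mathcal{H}(G)$ sampling weight is $\pi/(|G'(\gamma)|^2 \cdot \text{phase derivative})$, which by Lemma \ref{FonctionBL} is $\asymp (1+|\gamma|^{3})e^{-2\varphi(\gamma)}/(\text{phase derivative at }\gamma)$ — and the phase derivative of $G$ at $\gamma_n$, with $|\gamma_n| \asymp e^{n/(2\alpha)}$ lacunary, is $\asymp 1/|\gamma_n|$, recovering $1+|\gamma|^2$). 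Once the two norms are shown to be comparable on the common dense set of coefficient sequences with finite support, the identity of spaces follows.

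The main obstacle I expect is the bookkeeping in matching the two weight systems — i.e.\ verifying that the sampling weight $\pi/(\tau'(\gamma)|G'(\gamma)|^2)$ in the de Branges space (where $\tau$ is the phase function, $G = |G|e^{-i\tau}$ on $\RR$) is genuinely comparable to $\bk_\gamma(\gamma)^{-1} = (1+|\gamma|^2)e^{-2\varphi(\gamma)}$ uniformly in $n$. This requires a clean estimate of $\tau'(\gamma_n)$ for the specific lacunary sequence $\gamma_n = e^{(n+1)/(2\alpha)}$ (rotated to the imaginary axis), which one gets from $\tau'(x) = \sum_n \Im(1/(\gamma_n - x)) \cdot(\ldots)$ or more simply from $\tau'(\gamma_n) \asymp |G(x)|'/|G'(\gamma_n)| $-type relations; the lacunarity makes the relevant sum essentially a single dominant term, giving $\tau'(\gamma_n) \asymp 1/|\gamma_n|$. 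A secondary technical point is the behavior near the origin and the verification that $G/G^*$ is unimodular on $\RR$ (so that $\mathcal{H}(G)$ is well-defined as a de Branges space and $f \in \mathcal{H}(G) \iff f^* \in \mathcal{H}(G)$), but this is immediate from the symmetry of $\Gamma$ under $z \mapsto \bar z$. Modulo these estimates, the proof is a matter of assembling Lemma \ref{FonctionBL}, \eqref{kernel-estimate}, and the standard orthogonal-basis / sampling structure of de Branges spaces.
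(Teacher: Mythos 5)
Your first inclusion ($\ahd\subset\mathcal{H}(G)$, via the lower bound for $|G|$ from Lemma \ref{FonctionBL}, the sub-mean-value estimate of Lemma \ref{lem1} and a Fubini argument converting line integrals over $\{\mathrm{Im}\,z=y\}$ into the area integral) is a viable route and genuinely different from the paper, which instead expands $f$ in the Riesz basis biorthogonal to $\{\K_\gamma\}_{\gamma\in\Gamma}$. One slip there: with $\theta_n=-\pi/2$ the zero set lies on the \emph{negative} imaginary axis, so it is not conjugation-symmetric and $G^*\ne cG$; your claim ``$G^*=\pm G$'' is false. It is also unnecessary: $\|f^*\|_{\varphi,2}=\|f\|_{\varphi,2}$ because the weight is radial, and for $z$ in the closed upper half-plane $\dist(z,\Gamma)\ge|z|$, so the same line-integral estimate applies verbatim to $f^*/G$.

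The genuine gap is in the reverse inclusion. The ``classical sampling formula / orthogonal basis'' you invoke, with weights $\pi/(\tau'(\gamma)|G'(\gamma)|^2)$, is a statement about \emph{real} points $t_n$ where the phase of $G$ is constant modulo $\pi$; it does not apply at the zeros $\gamma_n=-iy_n\in\CC^-$ of $G$. Indeed the system $\{G(z)/((z-\gamma_n)G'(\gamma_n))\}$ is not orthogonal in $\mathcal{H}(G)$: a residue computation gives $\langle G/(\cdot-\gamma_n),G/(\cdot-\gamma_m)\rangle_{\mathcal{H}(G)}\asymp (y_n+y_m)^{-1}\neq 0$, so the phase-derivative bookkeeping you plan has no formula to plug into. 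What is true — and is exactly the paper's argument — is the weaker but sufficient statement that $\{(\mathrm{Im}\,(-\gamma_n))^{1/2}G(z)/(z-\gamma_n)\}$ is a \emph{Riesz} basis of $\mathcal{H}(G)$: under $f\mapsto f/G$ the space $\mathcal{H}(G)$ is identified with the model space $H^2\ominus\Theta H^2$, $\Theta=G^*/G$, and the normalized Cauchy kernels $y_n^{1/2}/(z+iy_n)$ form a Riesz basis there because the lacunary points $iy_n$ satisfy the Carleson condition ($\Theta$ is an interpolating Blaschke product). Since, by \eqref{kernel-estimate} and the estimate of $|G'(\gamma)|$ in Lemma \ref{FonctionBL}, the same functions (up to coefficients $a_\gamma$ with $|a_\gamma|\asymp1$) are the Riesz basis of $\ahd$ biorthogonal to $\{\K_\gamma\}$, both spaces consist precisely of the sums $G(z)\sum_\gamma c_\gamma|\gamma|^{1/2}/(z-\gamma)$ with $(c_\gamma)\in\ell^2$, with comparable norms; equality of the spaces (not merely of norms on a dense set) then follows because norm convergence implies pointwise convergence in both reproducing-kernel Hilbert spaces. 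Replacing your orthogonality/phase argument by this Carleson--model-space argument closes the gap, and at that point your proof essentially merges with the paper's.
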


\begin{proof}
We know that the normalized reproducing kernels   
$\{\K_\gamma\}_{\gamma\in \Gamma}=\{\bk_\gamma/\|\bk_\gamma\|_{\varphi,2}\}_{\gamma \in \Gamma}$
form a Riesz basis in $\mathcal F_\varphi^2$. Then the biorthogonal family
$$
\frac{\|\bk_\gamma\|_{\varphi,2}}{G'(\gamma)}
\cdot \frac{G(z)}{z-\gamma}\ , \qquad \gamma \in \Gamma,
$$
is a Riesz basis in $\mathcal F_\varphi^2$. By formula 
\eqref{kernel-estimate} and the 
estimate for $|G'(\gamma)|$ in Lemma \ref{FonctionBL} we conclude that the above 
biorthogonal system is of the form $a_\gamma |\gamma|^{1/2}\cdot 
{G(z)}/(z-\gamma)$, where $|a_\gamma| \asymp 1$. Hence, any function $f$
in the space $\mathcal F_\varphi^2$ can be written as
\begin{equation}
\label{1}
f(z)  = \sum_{\gamma \in \Gamma} c_\gamma |\gamma|^{1/2}\cdot 
\frac{G(z)}{z-\gamma}, 
\end{equation}
where $\{c_\gamma\} \in \ell^2$ and  
$\|f\|_{\varphi,2} \asymp \|\{c_\gamma\}\|_{\ell^2}$.
Writing for simplicity $\gamma_n = -iy_n$, we have
$$
\frac{f(z)}{G(z)} = \sum_n \frac{c_n y_n^{1/2}}{z+iy_n},
$$
and the series converges in the Hardy space 
$H^2= H^2(\mathbb{C}^+)$, since $\gamma$ satisfies the Carleson condition and so 
is an $H^2$-interpolating sequence.
Analogously, if we put $\Theta = G^*/G$, we get
$$
\frac{f^*(z)}{G(z)} = \sum_n \overline c_n y_n^{1/2}\frac{\Theta(z)}{z-iy_n},
$$
again the series converges in $H^2$, since $\Theta$ is an interpolating 
Blaschke product (with zeros $i y_n$). We conclude that $f/G$ and $f^*/G$ are in $H^2$.
Conversely, any function in $\mathcal{H}(G)$ can be written as a series of the form \eqref{1},
since the functions $({\rm Im}\,\gamma)^{1/2} G(z)/(z-\gamma)$ form a Riesz basis in $\mathcal{H}(G)$
whenever the zero set of $G$ is an interpolating sequence.
\end{proof}

It is also possible to have the comparison 
with the integral over the positive
or negative rays.
%
\begin{cor}
Let $G$ be the generating function of the sequence \eqref{bls} 
with $\theta_n=-\pi/2$. Then the measure $dx/|G(x)|^2$ is sampling on 
$\RR_+$, $\RR_-$ or $\RR$ for $\ahd$: for every $f\in \ahd$,
\[
 \|f\|_{\varphi,2}^2\asymp \int_{\RR_+}\left|\frac{f(x)}{G(x)}\right|^2dx
 \asymp \int_{\RR_-}\left|\frac{f(x)}{G(x)}\right|^2dx
 \asymp \int_{\RR}\left|\frac{f(x)}{G(x)}\right|^2dx.
\]
\end{cor}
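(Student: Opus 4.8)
\emph{Plan.}
The equivalence $\|f\|_{\varphi,2}^2\asymp\int_\RR|f/G|^2\,dx$ is exactly Proposition~\ref{asdb} together with the definition of the de Branges norm; since $\int_\RR=\int_{\RR_+}+\int_{\RR_-}$ this also gives the upper bounds $\int_{\RR_\pm}|f/G|^2\lesssim\|f\|_{\varphi,2}^2$. What remains is the reverse inequality on a half-line, and it is enough to treat $\RR_+$: the map $f(z)\mapsto \tilde f(z):=f(-z)$ is an isometry of $\ahd$ (because $\varphi$ is radial), while $G(-z)=G^*(z)$ (both equal $\prod_n(1-z/(iy_n))$, where $y_n=e^{(n+1)/(2\alpha)}$), so $|G(-x)|=|G(x)|$ on $\RR$ and hence $\int_{\RR_-}|f/G|^2\,dx=\int_{\RR_+}|\tilde f/G|^2\,dx$. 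Thus the $\RR_-$ statement follows from the $\RR_+$ one applied to $\tilde f$.

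For $\RR_+$ the idea is to transplant the problem to a translation system on the line. By (the proof of) Proposition~\ref{asdb}, the functions $y_n^{1/2}G(z)/(z+iy_n)$, $n\ge0$, form a Riesz basis of $\ahd$, and applying $f\mapsto f/G$ shows that on the real line every $f\in\ahd$ has the expansion $f(x)/G(x)=\sum_{n\ge0}c_n\,y_n^{1/2}/(x+iy_n)$ with $\|\{c_n\}\|_{\ell^2}\asymp\|f\|_{\varphi,2}$. So it suffices to show that $\{y_n^{1/2}/(x+iy_n)\}_{n\ge0}$ is a Riesz sequence in $L^2(\RR_+)$. Performing the change of variable $x=e^t$ and pulling a factor $e^{t/2}$ into each summand turns $\int_{\RR_+}\big|\sum_n c_n\,y_n^{1/2}/(x+iy_n)\big|^2\,dx$ into $\int_\RR\big|\sum_n c_n\,\psi(t-\tau_n)\big|^2\,dt$, where $\psi(\sigma)=e^{\sigma/2}/(e^\sigma+i)$ and $\tau_n=(n+1)/(2\alpha)$. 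Everything therefore reduces to the statement that $\{\psi(\cdot-\tau_n)\}_{n\ge0}$ is a Riesz sequence in $L^2(\RR)$.

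Since $\{\tau_n\}_{n\ge0}=\{m/(2\alpha):m\ge1\}$ is a sub-progression of $\{m/(2\alpha):m\in\Z\}$, it is enough to prove that $\{\psi(\cdot-m/(2\alpha))\}_{m\in\Z}$ is a Riesz basis for its closed span, which by the standard criterion for systems of integer translates is equivalent to the periodization $\xi\mapsto\sum_{k\in\Z}|\hat\psi(\xi+4\pi\alpha k)|^2$ being bounded above and bounded away from $0$. An explicit evaluation of the Mellin/Beta integral $\hat\psi(\xi)=\int_0^\infty u^{-1/2-i\xi}/(u+i)\,du$ gives $|\hat\psi(\xi)|=\pi e^{\pi\xi/2}/\cosh(\pi\xi)$, which is strictly positive for every real $\xi$ and decays exponentially as $|\xi|\to\infty$; hence the periodization is a strictly positive, continuous, $4\pi\alpha$-periodic function, so it lies between two positive constants. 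This yields $\int_{\RR_+}|f/G|^2\asymp\|\{c_n\}\|_{\ell^2}^2\asymp\|f\|_{\varphi,2}^2$, and together with the symmetry above it completes the proof.

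The only non-formal step is this last one — the Riesz-sequence property of $\{\psi(\cdot-\tau_n)\}$, i.e.\ the computation of $\hat\psi$ and the observation that it never vanishes. One may also avoid Fourier transforms altogether: an explicit integration gives the Gram matrix $\big\langle y_n^{1/2}/(x+iy_n),\,y_m^{1/2}/(x+iy_m)\big\rangle_{L^2(\RR_+)}=\dfrac{\pi-i(n-m)/(2\alpha)}{2\cosh\!\big((n-m)/(4\alpha)\big)}$, a convolution (Toeplitz) matrix, and one then has to check that its symbol is a bounded function staying bounded away from $0$ — the same point in disguise. Either route is elementary, and it is where the real work lies.
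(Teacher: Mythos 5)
Your proof is correct, but it takes a genuinely different route from the paper's. The paper stays on the real line: starting, as you do, from $\|f\|_{\varphi,2}^2\asymp\int_\RR|f/G|^2dx$ (Proposition \ref{asdb}), it discretizes $\int_{\RR_+}|f/G|^2dx$ by the mean value theorem on short intervals $I_n=[\gamma_n,\lambda_n)$ attached to the real reference sequence ($\theta_n=0$), uses $|I_n|\asymp 1+x_n$ together with the estimate of Lemma \ref{FonctionBL} and $\dist(x_n,-i\Gamma)\asymp 1+x_n$ to recognize the resulting sum as $\sum_n|f(x_n)|^2/\bk_{x_n}(x_n)$, and then invokes the stability of sampling under small $d_\rho$-perturbations (Corollary \ref{pertp=2}) to conclude that $\{x_n\}$ is still sampling for $\ahd$; the half-line lower bound follows with no explicit computation. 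You instead push the Riesz-basis expansion $f/G=\sum_n c_n y_n^{1/2}/(\cdot+iy_n)$ from the proof of Proposition \ref{asdb} through the unitary change of variables $x=e^t$, reducing the claim to the Riesz-sequence property of the translates $\psi(\cdot-\tau_n)$, $\psi(\sigma)=e^{\sigma/2}/(e^{\sigma}+i)$, $\tau_n=(n+1)/(2\alpha)$, which you verify by the classical periodization criterion after computing $|\hat\psi(\xi)|=\pi e^{\pi\xi/2}/\cosh(\pi\xi)$; your Toeplitz Gram-matrix formula is also correct, as is the symmetry reduction of $\RR_-$ to $\RR_+$ via $f\mapsto f(-\cdot)$ and $G(-z)=G^*(z)$. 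Your route buys explicit, quantitative structure (a convolution Gram matrix with a computable, non-vanishing symbol), but it leans on the exact lacunary form of $\Gamma$ and on evaluating a Mellin/Beta integral; the paper's argument is softer, needs no transform computation, and, since it only uses sampling plus perturbation stability, is the kind of argument that would survive for sequences that are not exactly $\Gamma$. The one step you should make explicit is the passage from finite sections to $f/G$ on $\RR_+$: the expansion converges in $H^2(\CC^+)$, hence in $L^2(\RR_+)$ after restriction, so the two-sided bound for partial sums passes to the limit.
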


This answers a question appearing in equation (4.3.1) in \cite{M}.

\begin{proof}
Since $\|f\|_{\varphi,2}\asymp\int_{\RR}\left|\frac{\dst f(x)}{\dst G(x)}\right|^2dx$, it is sufficient to prove that
\[
 \int_{\RR}\left|\frac{f(x)}{G(x)}\right|^2dx
 \lesssim \int_{\RR_+}\left|\frac{f(x)}{G(x)}\right|^2dx.
\]
The sequence $\Gamma$ given by \eqref{bls} (with $\theta_n=0$) is sampling, and hence,
according to Corollary \ref{pertp=2}
there exists $\delta>0$ and $C$ depending only on $\delta$
such that for every perturbation $\widetilde{\Gamma}$ 
of $\Gamma$ with $d_{\rho}(\gamma_n,\tilde{\gamma}_n)\le\delta$
we have
\[
 \frac{1}{C}\|f\|_{\varphi,2}\le \|f\|_{\varphi,2,\widetilde{\Gamma}}.
\]

Consider now the sequence $\Lambda=\{\lambda_n\}_{n\ge 0}$ 
such that $\lambda_n>\gamma_n$  and  $d_{\rho}(\gamma_n, \lambda_n) = \delta$. 
We set $I_n=[\gamma_n, \lambda_n)$, $n\ge 0$. Clearly, the intervals $I_n$
are disjoint if $\delta$ is sufficiently small. 
By the mean value theorem,  there exists $x_n\in I_n$ such that
\[
 \int_{I_n} \left|\frac{f(x)}{G(x)}\right|^2dx = |I_n|\times
 \left|\frac{f(x_n)}{G(x_n)}\right|^2.
\]
It is also clear that $|I_n|\asymp 1+x_n$. Since $x_n\in\RR^+$, using the estimate
in Lemma \ref{FonctionBL}
and taking into account that $\dist(x_n,-i\Gamma)\asymp 1+x_n$, we 
get
\begin{eqnarray*}
 \int_{\RR_+} \left|\frac{f(x)}{G(x)}\right|^2dx
 &\geq& \sum_{n\ge 0}\int_{I_n} \left|\frac{f(x)}{G(x)}\right|^2dx
 \asymp \sum_{n\ge 0}(1+x_n)\left|\frac{f(x_n)}{G(x_n)}\right|^2\\
 &\asymp& \sum_{n\ge 0}|f(x_n)|^2(1+x_n^2)e^{-2\varphi(x_n)}
 \asymp \sum_{n\ge 0}\frac{|f(x_n)|^2}{\bk_{x_{n}}(x_{n})}.
\end{eqnarray*}
Since $d_{\rho}(x_{n},\gamma_n)\le \delta$, the sequence $\widetilde{\Gamma}
:=\{x_{n}\}_{n\ge 0}$ is sampling for $\ahd$, and we get
\[
 \int_{\RR_+}\left|\frac{f(x)}{G(x)}\right|^2 dx
 \gtrsim \|f\|_{\varphi,2,\widetilde{\Gamma}}\ge\frac{1}{C}\|f\|_{\varphi,2}.
\]
\end{proof}

The following result shows that we have a similar situation in $\ahc$.

\begin{prop}\label{samplingaxe}
Every half-line starting from the origin is sampling for $\ahc$.
\end{prop}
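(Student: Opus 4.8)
The plan is to reduce the claim about an arbitrary half-line starting at the origin to the known sampling property of the sequence $\Gamma = \Gamma_\alpha$ (with a suitable choice of arguments $\theta_n$), exactly as in the proof of the previous Corollary, but now working with the sup-norm instead of the $L^2$-norm. Fix a half-line $\ell = \{t e^{i\theta_0}: t\ge 0\}$. By rotating, it suffices to treat $\ell = \RR_+$: indeed, $f\mapsto f(e^{i\theta_0}z)$ is an isometry of $\ahc$ onto itself since $\varphi$ is radial, so sampling for one half-line is equivalent to sampling for any other. So I would fix $\RR_+$ and aim to prove $\|f\|_{\varphi,\infty}\lesssim \sup_{x\in\RR_+}|f(x)|e^{-\varphi(x)}$ for all $f\in\ahc$.

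The key point is that $\RR_+$ already "sees" a perturbed copy of $\Gamma_\alpha$. Take $\Gamma=\{\gamma_n\}=\{e^{(n+1)/(2\alpha)}\}$ (i.e.\ $\theta_n=0$), which is a complete interpolating, hence sampling, sequence for $\ahd$; by the results quoted in the introduction it is also sampling for $\ahc$ (this is the analogue of Corollary \ref{pertp=2} in the $\infty$-setting; one may alternatively invoke the uniform control of the sampling constant under $d_\rho$-small perturbations). Now, given $f\in\ahc$, pick $z_0\in\CC$ with $|f(z_0)|e^{-\varphi(z_0)}\ge \tfrac12\|f\|_{\varphi,\infty}$. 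By Lemma \ref{lem1}(i), on the ball $D_\rho(z_0,\beta)$ we have $|f|e^{-\varphi}\gtrsim \|f\|_{\varphi,\infty}$ with a uniform constant, and $D_\rho(z_0,\beta)$ is comparable to a Euclidean disk $D(z_0,q|z_0|)$. Since the points $\gamma_n$ have consecutive ratios $e^{1/(2\alpha)}$, for $q$ fixed and $\beta$ chosen accordingly there is some $n$ with $|z_0|\asymp \gamma_n$ and with $\gamma_n e^{i\arg z_0}$ lying inside $D_\rho(z_0,\beta)$ — or, more robustly, the perturbed sequence $\widetilde\Gamma := \{\gamma_n e^{i\theta_n}\}$ with $\theta_n$ chosen so that each point is a $d_\rho$-small perturbation of the corresponding sampling point can be arranged to meet every such ball. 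Then at that point $\widetilde\gamma$ of $\widetilde\Gamma$ we get $|f(\widetilde\gamma)|e^{-\varphi(\widetilde\gamma)}\gtrsim \|f\|_{\varphi,\infty}$, so $\widetilde\Gamma$ is sampling for $\ahc$. Finally, replace each $\widetilde\gamma$ by the genuine real point $\gamma_n\in\RR_+$: since $d_\rho(\gamma_n,\widetilde\gamma_n)$ is small, Lemma \ref{lem1}(ii) transfers the lower bound from $\widetilde\Gamma$ to $\Gamma\subset\RR_+$, giving $\|f\|_{\varphi,\infty}\lesssim \sup_{n}|f(\gamma_n)|e^{-\varphi(\gamma_n)}\le \sup_{x\in\RR_+}|f(x)|e^{-\varphi(x)}$.

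A cleaner packaging, which I would actually prefer, avoids the ball-chasing: choose $\widetilde\Gamma = \{\gamma_n e^{i\theta_n}\}$ with the $\theta_n$ \emph{free}; by the remark following Theorem \ref{BaseRiesz}, every such rotation of $\Gamma$ is again complete interpolating for $\ahd$, hence (by the perturbation argument / the $\infty$-analogue) sampling for $\ahc$, \emph{with a sampling constant independent of the choice of $\theta_n$}. Now given $f$, select $z_0$ nearly attaining $\|f\|_{\varphi,\infty}$, let $n$ be the index with $e^{n/(2\alpha)}\le |z_0| < e^{(n+1)/(2\alpha)}$, and set $\theta_m = \arg z_0$ for that $m=n$ (and arbitrary otherwise); then $\gamma_n e^{i\theta_n}$ is within bounded $d_\rho$-distance of $z_0$, so Lemma \ref{lem1}(i)--(ii) give $|f(\gamma_n e^{i\theta_n})|e^{-\varphi(\gamma_n e^{i\theta_n})}\gtrsim\|f\|_{\varphi,\infty}$. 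Since $\theta$ was arbitrary, applying the uniform sampling inequality along $\Gamma$ itself (no rotation!) still needs the transfer from $\gamma_n e^{i\theta_n}$ to $\gamma_n$, which is Lemma \ref{lem1}(ii). Conclude as above.

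The main obstacle is the very last transfer step, from the perturbed point $\gamma_n e^{i\theta_n}$ back to the real point $\gamma_n$: Lemma \ref{lem1}(ii) controls the \emph{difference} $\big||f|e^{-\varphi}(\gamma_n) - |f|e^{-\varphi}(\gamma_n e^{i\theta_n})\big|$ by $d_\rho(\cdot,\cdot)\cdot\max_{D_\rho}|f|e^{-\varphi}$, and to absorb the right-hand side into the left one needs the $d_\rho$-distance to be \emph{small}, not merely bounded. So $\theta_n$ cannot be taken all the way to $\arg z_0$ in one step; instead one iterates, chaining through $O(1)$ intermediate points of the form $\gamma_n e^{i\vartheta}$ with successive arguments $d_\rho$-close, at each step using that $\Gamma$ rotated by the running angle is sampling with a uniform constant (using Lemma \ref{lem1}(i) to propagate the lower bound along the chain and Lemma \ref{lem1}(ii) to pass between consecutive points). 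This telescoping, together with the uniformity of all constants in $\theta_n$, is the technical heart of the argument; everything else is the rotation-invariance reduction and bookkeeping with the exponential scale of $\Gamma$.
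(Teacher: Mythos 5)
Your reduction by rotation invariance is fine, but the foundation of the argument is a false claim. You assert that $\Gamma=\{e^{(n+1)/(2\alpha)}\}_{n\ge0}$ (and, in the second packaging, any rotated copy $\{\gamma_ne^{i\theta_n}\}$), being complete interpolating for $\ahd$, is ``by the results quoted in the introduction'' also sampling for $\ahc$. No such transfer from $\ahd$ to $\ahc$ exists, and the statement is false: by Lemma \ref{FonctionBL} the generating function $G(z)=\prod_{\gamma\in\Gamma}(1-z/\gamma)$ satisfies $|G(z)|e^{-\varphi(z)}\asymp \dist(z,\Gamma)/(1+|z|^{3/2})$, which is bounded, so $G\in\ahc$, $G\not\equiv 0$ and $G|_\Gamma=0$. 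Hence $\Gamma$ is not even a uniqueness set for $\ahc$, and the inequality your argument culminates in, $\|f\|_{\varphi,\infty}\lesssim\sup_n|f(\gamma_n)|e^{-\varphi(\gamma_n)}$, fails for $f=G$. This is precisely the phenomenon Theorem \ref{infin} records: to get a sampling (complete interpolating) sequence for $\ahc$ one must \emph{add} a point to a complete interpolating sequence for $\ahd$, as in $\Gamma\cup\{1\}$ of Proposition \ref{gam-tilda}. So no proof of the half-line statement can pass through the discrete sequence $\Gamma\subset\RR_+$ alone. (A discrete reduction via $\Gamma\cup\{1\}\subset\RR_+$ and Proposition \ref{gam-tilda} would be logically sound, but that is proved much later and is not what you invoke.)

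Even apart from this, the step you yourself flag as the ``technical heart'' — transporting the near-extremal point $z_0$ to the positive axis — is not covered by the quoted lemmas. Lemma \ref{lem1}(i) produces a radius $\beta=\beta(c)$ with unspecified dependence on $c$; each link of your chain degrades the lower bound by a fixed factor $c\mapsto\tfrac{c}{2}e^{-\alpha\pi^2}$ and therefore shrinks the admissible step length, while the total $d_\rho$-distance to be covered along the circle $|z|=|z_0|$ is of order $\sqrt{2\alpha}$: bounded, but not small. If, say, $\beta(c)\asymp c$, the total distance $\sum_j\beta(c_j)$ you can travel is a convergent geometric series and need not reach the axis, so the chaining is not justified by Lemma \ref{lem1} as stated; and the intermediate rotated sequences you want to lean on at each step are, as above, not sampling for $\ahc$ anyway. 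The paper's proof avoids all of this with one observation: for $f\in\ahc$ set $F(z)=f(z)e^{-\alpha(\Log z)^2}$ on the plane slit along $\RR_+$; since $(\arg z)^2$ is bounded, $|F(z)|\asymp|f(z)|e^{-\varphi(z)}$, so $F\in H^\infty(\C\setminus\RR_+)$ and by the maximum principle its supremum is controlled by its boundary values on $\RR_+$, giving $\|f\|_{\varphi,\infty}\asymp\|f\|_{\varphi,\infty,\RR_+}$ directly, with a loss of only $e^{O(\alpha)}$; rotation invariance then handles an arbitrary half-line.
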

\begin{proof}
Pick $f\in \ahc$ 
with $\|f\|_\varphi =1$. Define
\[
 F(z)=f(z)e^{-\alpha(\Log z)^2},
\]
cutting  the plane at the positive real axis. Then $F$ is an analytic function
in $\C\setminus \RR^+_*$. Moreover,
\begin{eqnarray}\label{Hi-estim}
 |F(z)|=|f(z)|e^{-\alpha (\log^2|z|-(\arg z)^2)}\asymp |f(z)|e^{-\alpha (\log |z|)^2}.
\end{eqnarray}
Hence $F\in H^{\infty}(\C\setminus \RR^+_*)$ implying that 
$$
 \sup_{z\in\C}|f(z)|e^{-\alpha(\log_+|z|)^2} 
 \asymp \sup_{z\in \C\setminus \RR^+_*} |F(z)|
 =\sup_{z\in \RR_+}|F(z)|
  =\|f\|_{\varphi,\infty,\RR_+},
$$
which proves the claim.
\end{proof}


\subsection{Density results}\label{section3}

\begin{lem}
If $\Lambda$ is $d_\rho$-separated then $D^-({\Lambda})\le D^{+}(\Lambda)<\infty$.
\end{lem}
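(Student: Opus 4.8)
\smallskip
\noindent\emph{Proof sketch.} The plan is to reduce everything to the geometric description of $d_\rho$-separation recalled above: $\Lambda$ is $d_\rho$-separated if and only if there is a constant $c>0$ such that the Euclidean disks $D(\lambda,c|\lambda|)$, $\lambda\in\Lambda$, are pairwise disjoint. Granting this, the inequality $D^-(\Lambda)\le D^+(\Lambda)$ follows at once from $\liminf\le\limsup$ applied to both of the nested limits in the definitions: for each fixed $R>1$,
\[
\liminf_{r\to\infty}\frac{\Card(\Lambda\cap\cA(r,Rr))}{\log R}
\le \limsup_{r\to\infty}\frac{\Card(\Lambda\cap\cA(r,Rr))}{\log R},
\]
and taking the $\liminf$ over $R$ of the left-hand side and the $\limsup$ over $R$ of the right-hand side (which only increases the bound) gives $D^-(\Lambda)\le D^+(\Lambda)$. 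So the real content is the finiteness of $D^+(\Lambda)$.

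For this I would first establish a uniform bound on the number of points of $\Lambda$ in a dyadic annulus: there is $N_0=N_0(c)$ such that $\Card\big(\Lambda\cap\cA(\rho_0,2\rho_0)\big)\le N_0$ for every $\rho_0>0$. Indeed, if $\rho_0\le|\lambda|<2\rho_0$, then the disk $D(\lambda,c|\lambda|)$ has radius at least $c\rho_0$ and is contained in $D(0,2(1+c)\rho_0)$; since these disks are pairwise disjoint, comparing areas yields
\[
\Card\big(\Lambda\cap\cA(\rho_0,2\rho_0)\big)\cdot\pi c^2\rho_0^2\le\pi\big(2(1+c)\rho_0\big)^2,
\]
so $N_0=4(1+c)^2/c^2$ works. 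I would then cover an arbitrary annulus $\cA(r,Rr)$ by dyadic ones: it meets $\cA(2^k,2^{k+1})$ only for integers $k$ with $\log_2 r-1<k<\log_2 r+\log_2 R$, hence for at most $\log_2 R+2$ values of $k$, so that
\[
\Card\big(\Lambda\cap\cA(r,Rr)\big)\le N_0\,(\log_2 R+2).
\]
Dividing by $\log R$ and passing to the $\limsup$ first in $r$ and then in $R$ gives $D^+(\Lambda)\le N_0/\log 2<\infty$.

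There is no genuine obstacle here; the only points requiring a little care are the bookkeeping of the two nested limits in the definitions of $D^{\pm}$ and the use of the (already established) passage between $d_\rho$-separation and disjointness of the disks $D(\lambda,c|\lambda|)$. The elementary area comparison inside a single dyadic annulus is the heart of the matter.
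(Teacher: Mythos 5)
Your proposal is correct and follows essentially the same route as the paper: the disjointness of the Euclidean disks $D(\lambda,c|\lambda|)$, an area comparison giving a uniform bound on the number of points in an annulus of bounded ratio (dyadic in your case, ratio $\eta$ in the paper), and a covering of $\cA(r,Rr)$ by about $\log R$ such annuli. You merely make explicit the "standard argument" the paper only sketches, and your treatment of the trivial inequality $D^-(\Lambda)\le D^+(\Lambda)$ is fine.
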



\begin{proof}
As already mentioned in the beginning of Section \ref{section2}, 
when $\Lambda$ is $d_\rho$-separated, 
then there exists $c$ such that the Euclidean disks $D(\lambda,
c|\lambda|)$, $\lambda\in\Lambda$, are disjoint. 
A standard argument, based for instance on the consideration of the Euclidean
area of $\cA(x,R x)$ and that of the disks $D(\lambda,c|\lambda|)$, $\lambda\in
\Lambda\cap\cA(x,R x)$, shows that 
this implies in particular that 
for fixed
$\eta>0$
every annulus $\cA(x,\eta x)$ contains a uniformly bounded number
of points of $\Lambda$ (this number depends on $\eta$):
\[
 \Card(\Lambda\cap \cA(x,\eta x))\le M,\quad x>0.
\]
Suppose now that $R>R_0$ and $r>r_0$ with $R_0,r_0$ big enough. 
Let $N$ be the least integer such that $r\eta^N\ge rR$ so that
$N\asymp \log R/\log\eta$. Then
$$
\Card(\Lambda\cap \cA(r,Rr))=\sum_{\lambda\in\Lambda\cap \cA(r,Rr)} 1\le\sum_{n=1}^N\Card(\Lambda\cap\cA(r\eta^{n-1},r\eta^n))\lesssim \frac{M}{\log\eta}\log R.
$$
\end{proof}

\begin{prop}\label{conddiskdensity}
If $\Lambda$ is a zero sequence for $\ahc$ then
\begin{eqnarray}\label{diskdensity}
\liminf_{R\to +\infty}\frac{\Card (\Lambda\cap D(0,R))}{\log R} \le 2\alpha.
\end{eqnarray}
\end{prop}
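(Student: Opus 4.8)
The plan is to exploit the fact that a zero sequence $\Lambda$ for $\ahc$ produces an entire function vanishing on $\Lambda$ whose growth is controlled by $\varphi$, and then to compare the counting function of $\Lambda$ with the logarithmic growth of such a function via Jensen's formula. Concretely, suppose $\Lambda = \{\lambda_k\}$ is a zero sequence for $\ahc$, so there is $f \in \ahc$, $f \not\equiv 0$, with $f|_\Lambda = 0$. Normalizing by dividing out the zero (if any) at the origin, we may assume $f(0) \neq 0$. Then $|f(z)| \lesssim e^{\varphi(z)} = e^{\alpha (\log^+|z|)^2}$ for all $z$, i.e. $\log |f(z)| \le \alpha (\log R)^2 + O(1)$ for $|z| = R$.

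The key step is Jensen's formula applied on the disk $D(0,R)$:
\[
 \int_0^{2\pi} \log |f(Re^{i\theta})| \, \frac{d\theta}{2\pi} - \log|f(0)|
 = \sum_{|\lambda_k| < R} \log \frac{R}{|\lambda_k|}
 = \int_0^R \frac{n(t)}{t}\, dt,
\]
where $n(t) = \Card(\Lambda \cap D(0,t))$. The left-hand side is bounded above by $\alpha (\log R)^2 + O(1)$ because of the growth estimate on $f$. To extract information on $n(R)$ itself from the bound on $\int_0^R n(t)/t\,dt$, I would use the standard monotonicity trick: for any fixed $\kappa > 1$,
\[
 n(R) \log \kappa = \int_R^{\kappa R} \frac{n(R)}{t}\, dt \le \int_R^{\kappa R} \frac{n(t)}{t}\, dt \le \int_0^{\kappa R} \frac{n(t)}{t}\, dt \le \alpha (\log \kappa R)^2 + O(1) = \alpha (\log R)^2 + 2\alpha \log\kappa \log R + O(1).
\]
Dividing by $\log R$ gives
\[
 \frac{n(R)}{\log R} \le \frac{\alpha (\log R)^2}{\log\kappa \, \log R} + \frac{2\alpha \log R}{\log R} + o(1) = \frac{\alpha \log R}{\log\kappa} + 2\alpha + o(1),
\]
which unfortunately blows up. So I cannot simply bound $n(R)/\log R$ pointwise in $R$; the liminf is essential, and the natural move is to choose $\kappa = \kappa(R)$ depending on $R$, or rather to argue by contradiction using the liminf structure.

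The cleaner route, which I expect to be the actual argument: suppose for contradiction that $n(R) \ge (2\alpha + \eps)\log R$ for all $R \ge R_0$, for some $\eps > 0$. Then $\int_{R_0}^R \frac{n(t)}{t}\, dt \ge (2\alpha+\eps) \int_{R_0}^R \frac{\log t}{t}\, dt = \frac{2\alpha+\eps}{2}\big((\log R)^2 - (\log R_0)^2\big)$. Comparing with the Jensen upper bound $\int_0^R \frac{n(t)}{t}\,dt \le \alpha(\log R)^2 + O(1)$, we get $\frac{2\alpha + \eps}{2}(\log R)^2 \le \alpha (\log R)^2 + O(1)$, i.e. $\frac{\eps}{2}(\log R)^2 \le O(1)$, a contradiction as $R \to \infty$. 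Hence $\liminf_{R\to\infty} n(R)/\log R \le 2\alpha$, which is exactly \eqref{diskdensity}. The main obstacle is the bookkeeping of the $o((\log R)^2)$ versus $O(1)$ error terms — in particular one must be careful that $\arg z$ contributes only a bounded amount to $\log|f|$ on the circle $|z|=R$ (it does, since $|\arg z| \le \pi$, so $e^{-\varphi}$ and $|f|$ relate as in \eqref{Hi-estim}), and that dividing out a possible zero at $0$ only changes $f$ by a factor $z^m$, affecting the left side of Jensen by $m\log R$, which is $O(\log R) = o((\log R)^2)$ and hence harmless; these are routine but need to be stated.
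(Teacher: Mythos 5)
Your proposal is correct and follows essentially the same route as the paper: normalize so $f(0)\neq 0$, apply Jensen's formula, and derive a contradiction from the assumption $n(R)\ge(2\alpha+\eps)\log R$ for all large $R$ by integrating $n(t)/t$ against the bound $\alpha(\log R)^2+O(1)$. The side remarks (the failed pointwise $\kappa$-trick, the $\arg z$ worry, the $z^m$ normalization) are harmless; in fact the growth bound $|f(z)|\lesssim e^{\varphi(z)}$ is immediate from the definition of $\ahc$, so no argument about $\arg z$ is needed.
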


\begin{proof}
Suppose there is a function $g$ that vanishes on $\Lambda$ with
 \begin{equation}\label{densitelambda}
 \liminf_{R\to +\infty}\frac{\Card (\Lambda\cap D(0,R))}{\log R} >2\alpha.
 \end{equation}
Assuming $g(0)\neq 0$ (otherwise divide by a suitable power of $z$ 
which does not change the other zeros of $g$ and gives a function still in 
$\ahc$), Jensen's formula yields for every
$R>0$, 
$$
\sum_{\lambda\in{\Lambda} : |\lambda|<R}\log\frac{R}{|\lambda|} = 
\frac{1}{2\pi}\int_0^{2\pi} \log|g(Re^{i\theta})|\,d\theta -\log|g(0)| 
  \leq \alpha (\log R)^2 +C.
$$
Denote now by $n_g(R)$ the number of zeros of $g$ in $D(0,R)$. Then
$$
\sum_{\lambda\in{\Lambda} : |\lambda|<R}\log\frac{R}{|\lambda|} = 
\int_0^R\frac{n_g(t)}{t}\,dt.
$$
From \eqref{densitelambda} we deduce that
for $\varepsilon>0$ small enough there exists $R_0>0$ such that
for every $R\geq R_0$, 
$$n_g(R)=\Card(\Lambda \cap D(0,R))\geq 2\alpha(1+\varepsilon)\log R.$$
Then for every $R\geq R_0$,
$$\sum_{\lambda\in{\Lambda} : |\lambda|<R}\log\frac{R}{|\lambda|} \geq
\int_{R_0}^R \frac{n_g(t)}{t}dt
 \geq 2\alpha(1+\varepsilon)\int_{R_0}^R \frac{\log t}{t}dt
 \geq 2\alpha (1+\varepsilon)\left(\frac{(\log R)^2}{2}-\frac{(\log R_0)^2}{2}\right).
 $$
It follows that
$$
\alpha (1+\varepsilon)\Big((\log R)^2-(\log R_0)^2\Big)\leq \alpha (\log R)^2+C$$
which is impossible when $R$ is big.
\end{proof}

We may deduce the following corollary.

\begin{cor} \label{densitezero}
If $\Lambda$ satisfies \eqref{diskdensity}
then $D^{-}(\Lambda)\leq 2\alpha.$
\end{cor}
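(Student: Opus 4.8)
The plan is to deduce Corollary \ref{densitezero} from Proposition \ref{conddiskdensity} by a purely counting-theoretic argument: I want to show that the hypothesis \eqref{diskdensity}, i.e.
\[
\liminf_{R\to\infty}\frac{\Card(\Lambda\cap D(0,R))}{\log R}\le 2\alpha,
\]
forces
\[
D^-(\Lambda)=\liminf_{R\to\infty}\liminf_{r\to\infty}\frac{\Card(\Lambda\cap\cA(r,Rr))}{\log R}\le 2\alpha.
\]
First I would argue by contradiction: suppose $D^-(\Lambda)>2\alpha$. Then there is $\eta>0$ and $R_1>1$ so that $\Card(\Lambda\cap\cA(r,R_1 r))\ge (2\alpha+\eta)\log R_1$ for all $r$ large, say $r\ge r_0$. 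The key step is then to iterate this annular lower bound along a geometric sequence of radii: writing $r_k=r_0 R_1^{k}$, the annuli $\cA(r_{k},r_{k+1})$ are disjoint and their union up to level $K$ is $\cA(r_0,r_0R_1^{K})$, so summing gives
\[
\Card\bigl(\Lambda\cap D(0,r_0R_1^{K})\bigr)\ge \Card\bigl(\Lambda\cap \cA(r_0,r_0R_1^{K})\bigr)=\sum_{k=0}^{K-1}\Card\bigl(\Lambda\cap\cA(r_k,r_{k+1})\bigr)\ge K(2\alpha+\eta)\log R_1.
\]
Since $\log(r_0R_1^{K})=\log r_0+K\log R_1$, dividing through and letting $K\to\infty$ yields
\[
\liminf_{R\to\infty}\frac{\Card(\Lambda\cap D(0,R))}{\log R}\ge 2\alpha+\eta>2\alpha,
\]
which contradicts \eqref{diskdensity}. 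Hence $D^-(\Lambda)\le 2\alpha$, as claimed.

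The only mild subtlety — and the step I would be most careful about — is the interchange of the two liminfs hidden in the definition of $D^-$: from $D^-(\Lambda)>2\alpha$ one must first fix a single value of $R=R_1$ for which the inner liminf over $r$ already exceeds $2\alpha$ (this is legitimate since the outer operation is a liminf over $R$, so all sufficiently large $R$ work, and in particular one can take $R_1$ as large as convenient, which is also needed to make the geometric iteration efficient). After that the argument is just the elementary observation that a geometric chain of disjoint annuli tiles a disk, together with $\log$ turning products into sums. No appeal to the $d_\rho$-separation of $\Lambda$ is needed here beyond what is implicit in the densities being well defined, so the statement as given holds for any such $\Lambda$ satisfying \eqref{diskdensity}. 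Combining with Proposition \ref{conddiskdensity}, every zero sequence for $\ahc$ — in particular the zero set of any nonzero $f\in\ahc$, and hence any sampling sequence, which is a zero set of such an $f$ only in the trivial sense, so more precisely any zero sequence — has $D^-(\Lambda)\le 2\alpha$, the critical density.
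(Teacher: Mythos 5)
Your proof is correct and follows essentially the same route as the paper: argue by contraposition, fix one large ratio $R_1$ for which the annular count exceeds the critical value for all large $r$, and sum over the disjoint geometric chain of annuli to bound the disk count from below. The only gloss is that you compute the liminf only along the subsequence $R=r_0R_1^K$; the paper handles arbitrary radii by writing $x=\eta^{N+\kappa}$, $\kappa\in[0,1)$, but your version is repaired at once by monotonicity of $R\mapsto\Card(\Lambda\cap D(0,R))$, since for $r_0R_1^K\le R<r_0R_1^{K+1}$ the ratio is at least $K(2\alpha+\eta)\log R_1/(\log r_0+(K+1)\log R_1)\to 2\alpha+\eta$.
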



\begin{proof}
By contraposition, suppose that $D^-(\Lambda)>2\alpha$. Then there
are $R_0$ and $r_0$ such that for every $R>R_0$ and $r>r_0$ we have
\[
 \frac{\Card(\Lambda\cap\cA(r,Rr))}{\log R}\ge 2(1+\eps)\alpha,
\]
for a suitable fixed $\eps$.
Set $\eta=\max(R_0,r_0)$ and let $x=\eta^{N+\kappa}\in[\eta^N,\eta^{N+1})$
($\kappa\in [0,1)$) be big, then
\begin{eqnarray*}
\frac{\Card(\Lambda\cap D(0,x))}{\log  x} 
 &\ge&\frac{\sum_{k=1}^{N-1}\Card(\Lambda\cap\cA(\eta^k,\eta^{k+1}))}
 {\log x}\\
 &\ge&\frac{2(1+\eps)\alpha (N-1)\log \eta}{(N+\kappa)\log \eta}
 \longrightarrow 2(1+\eps)\alpha, \quad N\to\infty,
\end{eqnarray*}
i.e., $\Lambda$ does not satisfy \eqref{diskdensity}.
\end{proof}

The two preceding results together team up in:

\begin{cor}\label{densitezero1}
If $\Lambda$ is a zero sequence for $\ahc$,  
then $D^{-}(\Lambda)\leq 2\alpha.$
\end{cor}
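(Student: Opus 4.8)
The plan is simply to chain the two results that immediately precede the statement, so the proof is essentially a one-liner. First I would invoke Proposition~\ref{conddiskdensity}: since $\Lambda$ is a zero sequence for $\ahc$, it satisfies the disk-density estimate \eqref{diskdensity}, i.e.
\[
 \liminf_{R\to+\infty}\frac{\Card(\Lambda\cap D(0,R))}{\log R}\le 2\alpha .
\]
Then I would apply Corollary~\ref{densitezero}, whose hypothesis is exactly \eqref{diskdensity} and whose conclusion is $D^-(\Lambda)\le 2\alpha$. Putting the two together yields the claim, and there is nothing further to verify.

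The only points I would double-check are purely bookkeeping: both Proposition~\ref{conddiskdensity} and the present corollary are stated for the same space $\ahc$, so no transfer between $p=\infty$ and $p=2$ is involved; and the counting function appearing in \eqref{diskdensity} is the one over the \emph{origin-centered} disks $D(0,R)$, which is precisely the quantity that the Jensen-formula argument in the proof of Proposition~\ref{conddiskdensity} controls (via $\int_0^R n_g(t)/t\,dt$ against $\alpha(\log R)^2+C$). Since both ingredients are already established in the excerpt, there is no genuine obstacle here — the real content sits in Proposition~\ref{conddiskdensity} (the subharmonicity/Jensen estimate for zero sequences) and in the elementary annulus-to-disk summation of Corollary~\ref{densitezero}, and the corollary merely records their combination.
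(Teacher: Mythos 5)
Your proposal is correct and coincides with the paper's own argument: the corollary is obtained exactly by chaining Proposition~\ref{conddiskdensity} (Jensen's formula gives \eqref{diskdensity} for zero sequences of $\ahc$) with Corollary~\ref{densitezero} (\eqref{diskdensity} forces $D^-(\Lambda)\le 2\alpha$), which is precisely how the paper presents it (``the two preceding results together team up''). Nothing further is needed.
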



\section{Proof of the result on Riesz bases} 
\label{section7}

``$\Longleftarrow$'': We use Bari's Theorem \cite[p. 132]{N}. As in the
introduction,
let  $\bk_\lambda$ be the reproducing kernel of  $\ahd$ and let 
$\K_\lambda=\bk_\lambda/\|\bk_\lambda\|_{\varphi,2}$ be the normalized kernel at 
$\lambda$. Let  $F$ be an entire function with simple zeros at each $\lambda\in\Lambda$, 
$$ 
 F(z):= \prod_{n\geq 0} \big(1-\frac{z}{\lambda_n}\big),\qquad z\in \CC,
$$
and set 
$$g_\lambda(z)=\frac{F(z)}{F'(\lambda)(z-\lambda)}\|\bk_\lambda\|_{\varphi,2}, \qquad z\in \C.$$
If the functions $g_{\lambda}$ are in $\ahd$, then the family
$\{g_\lambda\}_{\lambda\in \Lambda}$ is biorthogonal to 
$\cK_\Lambda:=\{\K_\lambda\}_{\lambda\in \Lambda}$. 
Hence to show that  $\cK_\Lambda$ is Riesz basis it suffices to prove 

 \begin{enumerate}
 \item[(i)] $F/(\cdot -\lambda)\in \ahd$ for   $\lambda\in \Lambda$; 
 \item[(ii)] $\cK_\Lambda$ is complete : $\bigvee\{\K_\lambda,\; \; \lambda\in \Lambda\}=\ahd$;
 \item[(iii)] $\displaystyle \sum_{\lambda\in \Lambda}\big|\langle f, \K_\lambda\rangle\big|^2\lesssim \|f\|_{\varphi,2}^{2}$;
 \item[(iv)] $\displaystyle \sum_{\lambda\in \Lambda}\big|\langle f, g_\lambda\rangle \big|^2\lesssim \|f\|_{\varphi,2}^{2}$.
\end{enumerate}



To prove (i),  let 
$|z|=e^t$ with $|\lambda_{n-1}|\leq |z|\leq| \lambda_{n}|$ and suppose that 
$\dist (z,\Lambda)=|z-\lambda_{n-1}|$. Let $m\in\NN$ be such that  
$$
\frac{m}{2\alpha}-\frac{1}{4\alpha}\le t< \frac{m}{2\alpha}+\frac{1}{4\alpha}.
$$
Then, since $|\lambda_n|=\gamma_n e^{\delta_n}\asymp \gamma_m$ by the above,
and $\delta_m$ is uniformly bounded,
we have that $|m-n|$ is uniformly bounded in $|z|$, and $\log |\lambda_s|-t$ is bounded uniformly in $z$ and $s$ between $m$ and $n$. 
We use that for $d_{\rho}$-separated sequences the behavior of the
function $F$ is essentially given by the first $n$ terms.   We have 
\begin{eqnarray}\label{estimLogF}
 \log |F(z)|&=&\sum_{0\le k\le n-2} \log\frac{|z|}{|\lambda_k|}+\log|1-\frac{z}{\lambda_{n-1}}|+O(1)\\
 &=&\sum_{0\le k\le n-1}
 \bigl(t-\frac{k+1}{2\alpha}\bigr)+\log\dist(z,\Lambda)-t -\sum_{0\le k\leq n-1}\delta_k+O(1)
 \nonumber\\
 &=&\sum_{0\le k\le m-1}
 \bigl(t-\frac{k+1}{2\alpha}\bigr)+\log\dist(z,\Lambda)-t -\sum_{0\le k\leq m-1}\delta_k+O(1)
 \nonumber\\
 &=&mt-\frac{m(m+1)}{4\alpha}-\sum_{0\leq k\leq m-1}\delta_k+\log\dist(z,\Lambda)-t+O(1) 
 \nonumber\\
 &=&\alpha t^2- \frac{3}{2}t +\log \dist(z,\Lambda)-\sum_{0\leq k\leq m-1}\delta_k+O(1),\qquad t\to\infty.\nonumber
\end{eqnarray}
Next, if $m=lN+r$, $0\le r<N$, then 
\begin{eqnarray}\label{sumdelta}
\Bigl|\sum_{k=0}^{m-1}\delta_k\Bigr|\le\sum_{j=0}^{l-1}\Bigl|\sum_{i=0}^{N-1}
\delta_{jN+i}\Bigr|+\Bigl|\sum_{i=0}^{r-1}
\delta_{lN+i}\Bigr|\le {lN}\delta+O(1)=2\alpha\delta t+O(1).
\end{eqnarray}
Therefore, for some $\eta>0$, 
\begin{equation}
\label{psi-estim}
 e^{\varphi(z)} \frac{\dist(z,\Lambda)}{(1+|z|)^{2-\eta}}\lesssim |F(z)|\lesssim e^{\varphi(z)} \frac{\dist(z,\Lambda)}{(1+|z|)^{1+\eta}}, 
 \qquad z\in\C.
\end{equation}

This proves (i).

Next we pass to property (iii). By assumption, $\Lambda$ is $d_\rho$-separated, and so by Lemma \ref{lem1} we have 
$$
 \sum_{\lambda\in \Lambda}\big|\langle f, \K_\lambda\rangle\big|^2= 
 \sum_{\lambda\in \Lambda}\frac{|f(\lambda)|^2}{\|\bk_\lambda\|^{2}_{\varphi,2}}
 \lesssim\|f\|_{\varphi,2}^{2}.
$$

Let us turn to  (ii). By Lemma \ref{FonctionBL} we have 
$$
 |G(z)|\asymp e^{\varphi(z)}\frac{\dist(z,\Lambda)}{1+|z|^{3/2}}, \qquad z\in\C.
$$
If $f\in\ahd$, then by Lemma \ref{lem1} (iii), 
$|f(z)|=o( e^{\varphi(z)}/(1+|z|))$ and so 
\begin{eqnarray}\label{estimfG}
 |f(z)/G(z)|=o\bigg( \frac{1+\sqrt{|z|}}{\dist(z,\Gamma)}\bigg), \qquad z\in\C.
\end{eqnarray}
Let $|z|=e^t$ and let $n$ be such that
$$
 \frac{n}{2\alpha}-\frac{1}{4\alpha}\leq t<\frac{n}{2\alpha}+\frac{1}{4\alpha}.
$$
If we denote by $k(n)$ the integer such that the point $\lambda_{k(n)}
\in \Lambda$ is the closest  to $z$, then, by condition (b),
$|k(n) - n|$ is uniformly bounded. Hence, keeping in mind that 
$\Lambda$ is separated and $|\lambda_m| \asymp |z|$, $|m-n| \le |k(n)-n|$, we have
\begin{eqnarray}
 \label{estimF}
 |F(z)|  \asymp \prod_{k=1}^{k(n)} 
 \bigg|1-\frac{z}{\lambda_k}\bigg| 
 \asymp \frac{\dist(z,\Lambda)}{|z|} 
 \prod_{k=1}^{n-1}\frac{|z|}{|\lambda_k|}.
\end{eqnarray}

Thus, 
$$
 \frac{|F(z)|}{|G(z)|}\asymp \frac{\prod_{k=0}^{n-1} 
 |z/\lambda_k|}{\prod_{k=0}^{n-1} |z/\gamma_k|}
 \cdot \frac{\dist(z,\Lambda)}{\dist(z,\Gamma)}\asymp
 \exp\left(-\sum_{k=0}^{n-1}\delta_k\right) \frac{\dist(z,\Lambda)}{\dist(z,\Gamma)}.
$$
As in \eqref{sumdelta}, and recalling that $|m-n|$ is uniformly bounded, we have
$|\sum_{k=0}^{n-1}\delta_k|\le 2\alpha\delta t+O(1)$, where $2\alpha\delta<1/2$.
Therefore, 
$$
 \frac{1}{|z|^{2\alpha\delta}}\frac{\dist(z,\Lambda)}{\dist(z,\Gamma)}
 \lesssim \frac{|F(z)|}{|G(z)|},\qquad z\notin\Gamma.
$$
If $z$ is $d_{\rho}$-far from $\Lambda$, we have 
$\dist(z,\Lambda)/|z|^{2\alpha\delta}\asymp
|z|^{1-2\alpha\delta}>>\sqrt{|z|}$ so that with \eqref{estimfG} in mind we see that 
$F\notin \ahd$.  Now, if $\cK_\Lambda$ is not complete, then there exists 
a nonzero $f\in \ahd$ vanishing on $\Lambda$. So  $f=FS$ for some entire 
function $S$, and
$$
 |S(z)|=\left|\frac{f(z)}{F(z)}\right|=\left|\frac{f(z)}{G(z)}\frac{G(z)}{F(z)}\right|
 =o\bigg( \frac{1+|z|^{1/2+2\alpha\delta}}{\dist(z,\Lambda)}\bigg).
$$
Now, for every $R>0$, we can find a closed contour $C_R$ surrounding $D(0,R)$ which is 
$d_{\rho}$-separated from $\Lambda$ (and not meeting $\Gamma$). 
We have $\max_{z\in C_R} |S(z)|=o(1)$, $R\to \infty$, 
so that $\sup_{|z|\le R}|S(z)|=o(R)$, and hence 
$S$ vanishes identically, which is impossible.  
Statement (ii) is proved. 

It remains to show (iv). 
Let us estimate $F(\gamma_m)$ and $F'(\lambda_n)$. 
As above, we only need to consider the terms $k \leq n-1$.
By \eqref{estimF},
\begin{equation}
\label{bab1}
 |F(\gamma_m)|\asymp \frac{\dist(\gamma_m,\Lambda)}{\gamma_m}
 \prod_{0\leq k\leq m-1}\frac{\gamma_m}{|\lambda_k|}
\end{equation}
and 
\begin{equation}
\label{bab2}
 |F'(\lambda_n)|\asymp \frac{1}{|\lambda_n|}
 \prod_{0\leq k\leq n-1}\left|\frac{\lambda_n}{\lambda_k} \right| .
\end{equation}
Since the family $\{\K_\gamma\}_{\gamma\in \Gamma}$ is a Riesz basis,
we can  write
$$
 f=\sum_{m\geq 0}a_m\K_{\gamma_m},\qquad  (a_m)_{m \ge 0} \in \ell^2,
$$
and the sum in  (iv) becomes
$$
 \sum_{n\geq 0}\bigg|\sum_{m\geq 0} a_m 
 \underbrace{ \frac{F(\gamma_m)}{F'(\lambda_n)(\gamma_m-\lambda_n)}
 \cdot\frac{\|\bk_{\lambda_n}\|_{\varphi,2}}{\|\bk_{\gamma_m}\|_{\varphi,2}}}_{A_{n,m}}\bigg|^2.
$$
It remains to check that the matrix $[A_{n,m}]$ 
defines a bounded operator in $\ell^2$.

It follows from \eqref{bab1}, \eqref{bab2} and \eqref{kernel-estimate} that
$$
\begin{aligned}
|A_{n,m}| & \asymp \frac{\dist(\gamma_m,\Lambda)}{|\gamma_m - \lambda_n|}
\cdot \frac{\gamma_{m}^{m}}{|\lambda_{n}|^{n}} \cdot
\left(\prod_{0 \leq k\leq n-1} {|\lambda_k|}\right)\cdot
\left(\prod_{0 \leq k\leq m-1} {|\lambda_k|}\right)^{-1} \cdot
e^{ \alpha(\log |\lambda_n|)^2 - \alpha(\log\gamma_m)^2}  \\
& =\frac{\dist(\gamma_m,\Lambda)}{|\gamma_m - \lambda_n|} e^{c(n,m)},
\end{aligned}
$$
where 
$$
\begin{aligned}
c(n,m) & =  \frac{m(m+1)}{2\alpha}-\Big(\frac{n+1}{2\alpha}+\delta_n\Big) n -
\sum_{k=0}^{m-1}\Big(\frac{k+1}{2\alpha}+\delta_k\Big) \\
& \qquad \qquad - \sum_{k=0}^{n-1}\Big(\frac{k+1}{2\alpha}+\delta_k\Big)
 +\alpha\Big( \frac{n+1}{2\alpha}+\delta_n\Big)^2-\frac{(m+1)^2}{4\alpha} \\
 & = -\frac{m-n}{4\alpha} + \sum_{k=0}^{n-1}\delta_k - \sum_{k=0}^{m-1}\delta_k+O(1). 
\end{aligned}
$$

By condition (b) there exists $M$ such that $|\gamma_m - \lambda_n| \asymp |\lambda_n|$
when $n>m+M$, $|\gamma_m - \lambda_n| \asymp |\gamma_m|$ when 
$m>n+N$,  and  $|\gamma_m| \asymp |\lambda_n|$ for $|m-n|\le M$.
\medskip

$\bullet$ Let $|m-n|\le M$. Then it is clear that $|A_{n,n}| \lesssim 1$
with a bound independent of $n$ and $m$.
\medskip 

$\bullet$ If  $m>n+M$, then 
$$
 |A_{n,m}|\asymp \frac{\dist(\gamma_m,\Lambda)}{\gamma_m} 
 \exp\Big(-\frac{m-n}{4\alpha}+\sum_{k=n}^{m-1}\delta_k+O(1)\Big).
$$
\smallskip

$\bullet$ If  $n> m+M$, then $|\gamma_m-\lambda_n|\asymp |\lambda_n|$ and so 
$$
\begin{aligned}
 |A_{n,m}| & \asymp \frac{\dist(\gamma_m,\Lambda)}{\gamma_m} 
 \cdot \frac{\gamma_m}{|\lambda_n|} 
 \cdot \exp\Big(\frac{n-m}{4\alpha}+\sum_{k=m}^{n-1}\delta_k+O(1)\Big) \\
 & \asymp  \frac{\dist(\gamma_m,\Lambda)}{\gamma_m} 
 \cdot \exp\Big(-\frac{n-m}{4\alpha}+\sum_{k=m}^{n-1}\delta_k+O(1)\Big),
\end{aligned}
$$
since $\gamma_m/|\lambda_n| \asymp \exp\big(\frac{m-n}{2\alpha}\big)$.

Thus, 
\begin{equation}
\label{matrix}
|A_{n,m}|\asymp \frac{\dist(\gamma_m,\Lambda)}{\gamma_m} 
\exp\Big(-\frac{|m-n|}{4\alpha}+\sum_{k=m}^{n-1}\delta_k+O(1)\Big),
\end{equation}
where the sum is taken with negative sign if $m>n$.
It follows from (c) that $|A_{n,m}| \lesssim \exp (- \delta|n-m|)$
for some $\delta >0$, and so 
the matrix $[A_{n,m}]$ defines a bounded operator in $\ell^2$.
Statement (iv) is proved. 
\vspace{1em}


``$\Longrightarrow$'':

(a) By Corollary~\ref{lemma2x}, $\Lambda$ is $d_\rho$-separated. 
\smallskip

(b) Suppose that $(\delta_n) \notin\ell^\infty$. 
Then there exists an infinite subsequence of indices $\mathcal{N} = \{n_k\}$
such that for each $k$ there exists $m_k$ such that 
$d_\rho(\lambda_{n_k}, \gamma_{m_k}) \lesssim 1$, but $|n_k-m_k| \to\infty$
as $k\to \infty$.
Passing possibly to another subsequence (also denoted by $\lambda_{n_k}$ to not
overcharge notation) we can suppose that this subsequence is in an angle. 
Now, since $\mathcal{F}_{\varphi}^2$ 
is rotation invariant, $\{e^{i\theta} \lambda_n\}$  
is also a complete interpolating sequence for any $\theta \in \mathbb{R}$
for which the subsequence $(\lambda_{n_k})_k$ is $d_\rho$ separated from $\Gamma$.
Thus, we may assume without loss of generality that
$$
d_\rho(\lambda_{n_k}, \Gamma) \asymp 1,
$$
that is, $|\lambda_{n_k} - \gamma| \ge c|\lambda_{n_k}|$, $\gamma\in \Gamma$  
(with constants independent of $k$) and also that
$$ 
d_\rho(\gamma_{m_k}, \Lambda) \asymp 1.
$$ 
To simplify the notations we write $n$ and $m$ in place of $n_k$ and $m_k$, 
 

Now let us consider 
$$
 A_{n,m} = \langle g_{\lambda_n},
 \K_{\gamma_m}\rangle = \frac{F(\gamma_m)}{F'(\lambda_n)(\gamma_m-\lambda_n)}
 \cdot\frac{\|\bk_{\lambda_n}\|_{\varphi,2}}{\|\bk_{\gamma_m}\|_{\varphi,2}}.
$$
Observe that we have assumed conditions (a) and (c) which ensure that $g_{\lambda}\in\ahd$
(cf.\ proof of (i) in Bari's theorem in the beginning of this section). 
As in \eqref{bab1}--\eqref{bab2} and taking into account 
that $\dist(\gamma_m,\Lambda)\asymp \gamma_m$,
$$                  
 |F(\gamma_m)|  \asymp 
 \frac{\gamma_m^n}{|\lambda_0\lambda_1 \dots\lambda_{n-1}|}.
$$
Analogously, 
$$
|F'(\lambda_n)| \asymp \frac{|\lambda_n|^{n-1}}{|\lambda_0\lambda_1 \dots\lambda_{n-1}|}.
$$
Thus, using the estimate  \eqref{kernel-estimate} for the norm of the reproducing kernel
as well as the estimates $\gamma_m \asymp |\lambda_n| 
\asymp |\gamma_m-\lambda_n|$, we get
$$
|A_{n,m}| \asymp \frac{\gamma_m^n} {|\lambda_n|^n} e^{\alpha(\log^2 |\lambda_n| - 
\log^2 \gamma_m)}.
$$
Note that $ \log\gamma_m = \frac{m+1}{2\alpha}$ and 
$\log |\lambda_n| = \frac{n+1}{2\alpha} +\delta_n$. Let us write $|\lambda_n| = \gamma_m e^{\dl}$
with $|\dl| \lesssim 1$.  Hence $\exp({\frac{n+1}{2\alpha}+\delta_n})=\exp({\frac{m+1}{2\alpha}+\dl})$ and so 
\begin{equation}
\label{delta1}
\frac{n-m}{2\alpha}=-\delta_n+\dl.
\end{equation}
Without loss of generality, let $\delta_n\to +\infty$. 
Moreover, we may assume that $\dl > 1$. Otherwise, we may 
replace $m$ by $m-m_0$ for some sufficiently large $m_0$ that can be chosen to be independent 
of $k$. (If $\delta_n\to-\infty$ then we may assume that $\dl>1$. Otherwise, we may 
replace $m$ by $m+m_0$ for some sufficiently large $m_0$)

Then by \eqref{delta1} we have
$$
\begin{aligned}
\log|A_{n,m}| & = 
n\frac{m+1}{2\alpha} - n \Big(\frac{n+1}{2\alpha} +\delta_n\Big) -
\frac{(m+1)^2}{4\alpha} + \alpha\Big(\frac{n+1}{2\alpha} +\delta_n\Big)^2  + O(1)\\
& = - \frac{(n-m)^2}{4\alpha} +\frac{n-m}{2\alpha}+\delta_n+\alpha\delta_n^2+O(1)\\
&=2\alpha\delta_n\delta_{mn}+O(1)
\end{aligned}
$$

Thus, $|A_{n,m}| \gtrsim e^{2\alpha\delta_n\dl}$. Repeating this estimate for each $k$
(note that all asymptotic estimates $\asymp$ and $\gtrsim$ hold uniformly
with respect to $k$) we conclude that
$\lim_{k\to\infty} |A_{n_k,m_k}| = \infty$. Since $A_{m,n} = (g_{\lambda_n},
\K_{\gamma_m})$, we conclude that $\|g_{\lambda_{n_k}}\|_{\varphi,2}
\to \infty$, $k\to\infty$, and so $\{\K_\lambda\}_{\lambda\in \Lambda}$ 
is not uniformly minimal (in particular, it is not a Riesz basis).
\\

(c) Given $N$, set
\begin{equation}
\label{epd}
\sup_n\frac{1}{N}\Big|\sum_{k=n+1}^{n+N}\delta_k\Big|= \frac{1}{4\alpha}+\eps_N.
\end{equation}
Since we already  have shown that $(\delta_n)$ is bounded, 
the sequence $(\eps_N)$ is bounded. If for some $N$ we have $\eps_N<0$, then (c)
is proved. Assuming the converse, we have $\eps_N \ge 0$.

Replacing, if necessary, $\Lambda$ by $e^{i\theta}\Lambda$ we may assume
that $\dist (\gamma_m, \Lambda) \asymp \gamma_m$.

Suppose first that there is a subsequence $(N_l)$ 
such that $\eps_{N_l}$ is bounded from below by $\eps>0$.
Then there exists $n_l$ such that
$$
 \frac{1}{N_l}\Big|\sum_{k=n_l+1}^{n_l+N_l}\delta_k\Big|\ge \frac{1}{4\alpha}+\eps_{N_l}/2
 \ge \frac{1}{4\alpha}+\eps/2. 
$$
It follows from \eqref{matrix} that the sum 
$|A_{n_l,n_l+N_l}|+|A_{n_l+N_l,n_l}| $ is unbounded and so {\it a fortiori} the
matrix $[A_{n,m}]$. 

Suppose now that $(\eps_N)$ is a sequence of positive numbers tending to zero.
Then for every $N$ there exists $n_N$ such that 
$$
 \Big|\sum_{k=n_N+1}^{n_N+N}\delta_k\Big|\geq N\Big(\frac{1}{4\alpha}+\eps_N \Big)-1
$$
By definition
$$
 \Big|\sum_{k=n_N+1}^{n_N+K}\delta_k\Big|\le K\Big(\frac{1}{4\alpha}+\eps_K\Big)
$$
for every $1\le K\le N$. 
Then
\begin{equation}
\label{bab3}
 \Big|\sum_{k=n_N+K+1}^{n_N+N}\delta_k\Big| \geq \frac{N-K}{4\alpha}
 +N\eps_N-K\eps_K-1. 
\end{equation}
Two cases may occur.

Case (1): If $(N\eps_N)$ has a subsequence which 
tends to $+\infty$, then for fixed $K$ and $N$ in this subsequence,
the expression $N\eps_N-K\eps_K$ is positive and unbounded. Thus, 
again by \eqref{matrix}, the sum of matrix entries $|A_{M+K,M+N}|+|A_{M+N,M+K}|$ 
is unbounded and so the matrix $[A_{n,m}]$ can not define a bounded operator 
in $\ell^2$.

Case (2): Assume that the sequence $(N\eps_N)$ is bounded.
If there exist arbitrarily large $N$ such that 
$\sum_{k=n_N +1}^{n_N +N} \delta_k >0$ then for these values of $N$ and 
every $1\le K\le N$ we have by \eqref{bab3} 
$$
 \sum_{k=n_N+K+1}^{n_N+N}\delta_k \geq \frac{N-K}{4\alpha}
 +N\eps_N-K\eps_K-1. 
$$ 
Hence, by \eqref{matrix},
$$
 |A_{n_N+N, n_N+K+1}|\asymp \exp\bigg( \sum_{k=n_N+K+1}^{n_N+N} \delta_k 
 - \frac{N-K}{4\alpha}\bigg)\asymp 1.
$$
Analogously, if there exist arbitrarily large $N$ such that 
$\sum_{k=n_N +1}^{n_N +N} \delta_k <0$, then by \eqref{bab3} for
$1\le K\le N$ we have  
$$
\sum_{k= n_N+1}^{n_N +N - K} \delta_k \le -\frac{N-K}{4\alpha} +O(1), 
$$                                                                
and so, by \eqref{matrix}, 
$$
|A_{n_N+1, n_N + N - K}| \asymp \exp\bigg( -\frac{N-K}{4\alpha} - 
\sum_{k= n_N+1}^{n_N +N - K} \delta_k \bigg) \asymp 1.
$$
In each of these two situations 
we conclude that the matrix  $[A_{n,m}]$ has an increasing number 
of entries in one line which are bounded away from zero, 
and, thus, cannot define a bounded operator.  
\qed

\begin{rem}
\label{bsm}
{\rm Now we compare in more detail Theorem \ref{BaseRiesz}  
with the results of Belov, Mengestie and Seip \cite{BMS}.
By Proposition  \ref{asdb} (see formula \eqref{1}) 
any function $f \in \ahd$ may be represented as 
$$
f(z)  = G(z) \sum_{\gamma \in \Gamma} \frac{c_\gamma |\gamma|^{1/2}}{z-\gamma}, 
$$
where $(c_\gamma)_{\gamma \in \Gamma}  
\in \ell^2$ and $\|f\|_{\varphi, 2} \asymp \|(c_\gamma)\|_{\ell^2}$.
Thus, the space $\ahd$ is a special case of the spaces $\mathcal{H}(\Gamma, v)$
introduced in \cite[Section 2]{BMS} and which consist of entire functions 
of the form $\displaystyle f(z) = G(z) \sum_{\gamma \in \Gamma} \frac{c_\gamma v_\gamma^{1/2}}{z-\gamma}$. 
The space $\ahd$ corresponds to the choice of weights
$v_n = |\gamma_n|$ (we write $v_n$ in place of $v_{\gamma_n}$). 

In \cite[Theorems 5.1, 5.2]{BMS} a description of Riesz bases in the spaces
$\mathcal{H}(\Gamma, v)$ was obtained under two additional restrictions:
\begin{equation}
\label{14}
\text{either} \quad  v_{n} = o\Big(\sum_{k<n} v_k\Big) \qquad or \quad 
\frac{v_n}{|\gamma_n|^2} = o\Big(\sum_{k>n} \frac{v_k}{|\gamma_k|^2}\Big)
\end{equation}
as $n\to \infty$. The criteria in \cite{BMS} are also of the perturbative nature
and are very close to our result. However,  for the choice of the weight 
$v_n = |\gamma_n|$ neither of the conditions in \eqref{14} is satisfied. Thus, the results
stated in \cite{BMS} do not cover Theorem \ref{BaseRiesz}. It seems very probable
that Theorem \ref{BaseRiesz} can be proved using the powerful methods developed 
in \cite{BMS} (in particular, Theorem 1.3), however our feeling is that such a proof  will not be shorter  and more transparent than the one presented in this section. }                                                                   
\end{rem}


\section{Complete interpolating sequences in $\ahc$} 
\label{section8}

In this section we prove Theorem \ref{infin}. The proof is in many ways
similar to the proof of Theorem \ref{BaseRiesz}. Put
$$
\widetilde {\Gamma} = \Gamma\cup\{\tilde \gamma\} = \{\gamma_n\}_{n\ge 0} 
\cup\{\tilde \gamma\},
$$
where $\Gamma = \Gamma_\alpha=\{e^{\frac{n+1}{2\alpha}}e^{i\theta_n}\}_{n\ge 0}$ 
is our reference sequence \eqref{bls} and $\tilde \gamma \notin \Gamma$.

\begin{prop}
\label{gam-tilda}
The sequence $\widetilde {\Gamma}$ is a 
complete interpolating sequence for $\ahc$ for any $\tilde \gamma \in \CC\setminus\Gamma$.
\end{prop}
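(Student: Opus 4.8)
The plan is to realize $\ahc$ as an $L^\infty$-type de Branges space and reduce the claim to an elementary interpolation/sampling statement about adding a single zero, mimicking the structure of the proof of Theorem~\ref{BaseRiesz}. Concretely, write $\widetilde G(z) = (1 - z/\tilde\gamma)\, G(z)$ where $G$ is the generating function of Lemma~\ref{FonctionBL} attached to $\Gamma = \Gamma_\alpha$ (with the $\theta_n$ matching those of $\widetilde\Gamma$). From Lemma~\ref{FonctionBL} one gets immediately
\[
 |\widetilde G(z)| \asymp e^{\varphi(z)}\,\frac{\dist(z,\widetilde\Gamma)}{1+|z|^{1/2}},\qquad z\in\CC,
\]
and also $|\widetilde G'(\gamma_n)| \asymp e^{\varphi(\gamma_n)}/(1+|\gamma_n|^{1/2})$ as well as $|\widetilde G'(\tilde\gamma)| \asymp e^{\varphi(\tilde\gamma)}$ (the extra factor $1+|z|$ compared to $G$ reflecting the one extra zero). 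This is the $\ahc$ analogue of the normalization used in Section~\ref{section7}, and it is what makes the peak functions $\widetilde G(z)/((z-\gamma)\widetilde G'(\gamma))$ have the right size.

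For \emph{interpolation}, given $(v_\gamma)_{\gamma\in\widetilde\Gamma}\in\ell^\infty_{\varphi,\widetilde\Gamma}$, I would form the Lagrange-type series
\[
 f(z) = \sum_{\gamma\in\widetilde\Gamma} v_\gamma\,\frac{\widetilde G(z)}{\widetilde G'(\gamma)(z-\gamma)},
\]
and estimate $|f(z)|e^{-\varphi(z)}$ using the size estimate for $\widetilde G$ together with the lacunarity of $\widetilde\Gamma$ (the radii are essentially geometric, so the terms decay geometrically away from the index closest to $z$); this gives $f\in\ahc$ with $\|f\|_{\varphi,\infty}\lesssim \|v\|_{\varphi,\infty,\widetilde\Gamma}$. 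For \emph{sampling}, I would use Proposition~\ref{samplingaxe} (half-lines are sampling for $\ahc$) combined with a Bernstein-type/mean-value argument as in the proof of the Corollary after Proposition~\ref{asdb}: the lacunary points $\gamma_n$ (perturbed so as to sit at comparable distance from a ray) control the integral of $|f|e^{-\varphi}$ along the ray, hence control $\|f\|_{\varphi,\infty}$; Lemma~\ref{lem1} converts pointwise control at the perturbed points into control at the $\gamma_n$ themselves. Thus $\widetilde\Gamma$ is simultaneously interpolating and sampling, i.e.\ complete interpolating, for $\ahc$. (Alternatively, and perhaps more in the spirit of Theorem~\ref{infin}, one can argue directly: $\widetilde\Gamma\setminus\{\tilde\gamma\}$ equals $\Gamma$, which is complete interpolating for $\ahd$, so once Theorem~\ref{infin} is available in the ``removal'' direction the proposition is its special case; but since Proposition~\ref{gam-tilda} is used as a building block \emph{towards} Theorem~\ref{infin}, the self-contained argument above is the one to give.)

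The main obstacle I expect is the \emph{sampling} half, specifically getting a \emph{lower} bound $\|f\|_{\varphi,\infty}\lesssim \sup_{n}|f(\gamma_n)|e^{-\varphi(\gamma_n)}$ uniformly, rather than just along a ray. The subtlety is that a half-line carries a continuum of points while $\widetilde\Gamma$ is discrete and lacunary, so one must show that the maximum of $|f|e^{-\varphi}$ on the ray is essentially attained, up to a fixed constant, at one of the $\gamma_n$ (or a controlled perturbation). This is exactly where Lemma~\ref{lem1}(i)--(ii) enters: if $|f(z_0)|e^{-\varphi(z_0)}$ is large for some $z_0$ on the ray, then it stays comparably large on a $d_\rho$-ball $D_\rho(z_0,\beta)$, which, since $\rho(r)\asymp r$, is a Euclidean disk of radius $\asymp|z_0|$ and therefore contains some $\gamma_n$ once $\beta$ is not too small relative to the (fixed) lacunarity step $e^{1/(2\alpha)}$; one may need to first rotate $\Lambda$ — or rather choose the $\theta_n$ — so that the relevant $\gamma_n$ lands close to the ray, using rotation invariance of the norm as in Section~\ref{section7}. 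The interpolation half and the norm estimates on $\widetilde G$ are routine given Lemma~\ref{FonctionBL} and the geometric spacing.
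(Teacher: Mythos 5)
Your interpolation half is essentially the paper's argument: the same Lagrange-type series built from $\widetilde G(z)=(1-z/\tilde\gamma)G(z)$, estimated via Lemma \ref{FonctionBL}, and your size estimates for $\widetilde G$ are correct. The genuine gap is in the sampling half. The route you propose --- half-line sampling (Proposition \ref{samplingaxe}) plus Lemma \ref{lem1} to transfer largeness from a point $z_0$ on the ray to some $\gamma_n$ --- cannot work. First, Lemma \ref{lem1}(i) propagates a lower bound only on a ball $D_\rho(z_0,\beta)$ with $\beta<1$ depending on the level $c$, i.e.\ a Euclidean disk of radius comparable to $\beta|z_0|$, and such a disk need not contain any point of $\widetilde\Gamma$: each circle $|z|=\gamma_n$ carries exactly one point, at an angle $\theta_n$ that you are not free to choose (the sampling inequality must hold at the given nodes, so ``choosing the $\theta_n$'' or rotating the sequence changes the statement). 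Second, and more decisively, any purely local argument of this kind would apply verbatim to $\Gamma$ itself, which differs from $\widetilde\Gamma$ by a single point and has identical local geometry at infinity --- yet $\Gamma$ is \emph{not} sampling for $\ahc$, since $G\in\ahc$ and $G$ vanishes on $\Gamma$. The effect of the one extra point is invisible to Bernstein-type estimates; it is a global rigidity phenomenon.

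What the paper does instead, and what your proof is missing, is a uniqueness argument combined with the boundedness of the interpolation operator. If $f\in\ahc$ vanishes on $\widetilde\Gamma$, then $g=f/\widetilde G$ is entire and, by the estimate $|\widetilde G(z)|\asymp e^{\varphi(z)}\dist(z,\widetilde\Gamma)/(1+|z|^{1/2})$, satisfies $|g(z)|\lesssim (1+|z|^{1/2})/\dist(z,\widetilde\Gamma)$, hence $g\equiv 0$ by the maximum modulus principle; so $\widetilde\Gamma$ is a uniqueness set (note that, unlike $G$, the function $\widetilde G$ is \emph{not} in $\ahc$, which is exactly what the extra zero buys). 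Since your Lagrange series defines a bounded linear operator $v\longmapsto f_v$ from $\ell^{\infty}_{\varphi,\widetilde\Gamma}$ to $\ahc$, uniqueness forces every $f\in\ahc$ to coincide with the interpolant of its own restriction, and this is precisely the sampling inequality $\|f\|_{\varphi,\infty}\lesssim \|f\|_{\varphi,\infty,\widetilde\Gamma}$. Replacing your local sampling argument by this uniqueness-plus-linear-operator step closes the gap.
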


\begin{proof}
Let $\widetilde{G}(z) = \prod_{\gamma \in \widetilde{\Gamma}} 
(1-z/\gamma)$, that is $\widetilde{G}(z) = (1-z/\tilde \gamma)G(z)$ where $G$ is the 
associated function of Lemma \ref{FonctionBL} vanishing exactly on $\Gamma$
(without loss of generality assume that $\tilde \gamma \ne 0$).
For every sequence $v=(v_{\gamma})_{\gamma
\in\widetilde {\Gamma}}$ with $\|v\|_{\infty,\varphi, \widetilde{\Gamma}} <\infty$, 
we construct the corresponding interpolating function
and estimate its norm:
$$
 f_v(z)  = \sum_{\gamma\in\widetilde {\Gamma}}  
 v_{\gamma}\frac{\widetilde{G}(z)}{\widetilde{G}'(\gamma) (z-\gamma)}.
$$
By the estimates of Lemma \ref{FonctionBL} we have, for any $z\in \C$,
\begin{eqnarray*}
 |f_v(z)| & \lesssim & 
 \sum_{\gamma\in \widetilde {\Gamma}}|v_{\gamma}| e^{\varphi(z)}
 \frac{\dist(z,\widetilde {\Gamma})}{1+|z|^{3/2}}
 \cdot \frac{|z|}{\gamma} \cdot \frac{1+|\gamma|^{3/2}}{e^{\varphi(\gamma)}|z-\gamma|}\\
 &\lesssim & 
 e^{\varphi(z)}\|v\|_{\infty,\varphi, \widetilde{\Gamma}}\sum_{\gamma\in \widetilde {\Gamma}}
 \frac{\dist(z,\widetilde {\Gamma})}{|z-\gamma|}\frac{1+\gamma^{1/2}}{1+|z|^{1/2}}
 \lesssim  \|v\|_{\varphi,\infty, \widetilde{\Gamma}} e^{\varphi(z)},
\end{eqnarray*}
and we deduce that $\widetilde {\Gamma}$ is an interpolating sequence for
which we can construct a linear operator of interpolation.

We now show that $\widetilde {\Gamma}$ is a uniqueness sequence.
For this, let $f\in \ahc$ vanish on $\widetilde {\Gamma}$. 
Consider the holomorphic function $g=f/\widetilde{G}$.
Then, again by Lemma \ref{FonctionBL},
$$
|g(z)|\lesssim\frac{1+|z|^{1/2}}{\dist(z,\widetilde{\Gamma})}
$$
and, by the maximum modulus principle, $g=0$.

As a conclusion the sequence $\widetilde {\Gamma}$ is an interpolating sequence
which is also a uniqueness sequence and thus a sampling sequence. 
\end{proof}

\begin{proof}[Proof of Theorem \ref{infin}]
Let $\Lambda = (\lambda_n)_{n\ge 0}$ be a sequence of complex numbers
tending to infinity, with $|\lambda_n| \le |\lambda_{n+1}|$. 
As before, we write $\lambda_n=\gamma_n e^{\delta_n} e^{i\theta_n}$, 
$\theta_n\in \RR$, $n\ge 0$. Now let $\wl = \Lambda \cup \{\tilde \lambda\}$.
We can also write $\wl = (\lambda_n)_{n\ge -1}$ with $\lambda_{-1} =\tilde \lambda$,
and analogously for $\widetilde{\Gamma}$.

To prove Theorem \ref{infin} we need to show that $\wl$
is a complete interpolating sequence for $\ahc$ if and only if 
$\Lambda$ is complete interpolating for $\ahd$, that is, $\Lambda$ satisfies 
the conditions (a)--(c) of Theorem \ref{BaseRiesz}.
\\

{\it $\Lambda$ is complete interpolating for $\ahd$  $\Longrightarrow$
$\wl$ is a complete interpolating sequence for $\ahc$.}
\smallskip

By Theorem \ref{BaseRiesz}, $\Lambda$ satisfies the conditions (a)--(c). Hence,
the infinite product $\wf(z) = \prod_{\lambda\in \wl} (1-z/\lambda)$ converges
uniformly on compact sets and, by \eqref{psi-estim}, there exists $\eta>0$ such that
$$
 e^{\varphi(z)} \frac{\dist(z,\wl)}{(1+|z|)^{1-\eta}}\lesssim 
 |\wf(z)|\lesssim e^{\varphi(z)} \frac{\dist(z,\wl)}{(1+|z|)^{\eta}}, 
 \qquad z\in\C.
$$     
Thus, $\wf \notin \ahc$, while $\wf/(\cdot -\lambda) \in \ahc$, $\lambda \in \wl$. 
Also, if $\wf g\in \ahc$, then $|g(z)| \lesssim |z|^{1-\eta}/\dist(z, \wl)$, 
whence $g\equiv 0$. Thus, $\wl$ is a uniqueness set for $\ahc$.

It remains to show that $\wl$ is an interpolating sequence for $\ahc$. Let 
$(v_n)_{n\ge 0}$ be a finite sequence and put
$$
f_v(z) = \sum_{n} v_n \frac{\wf(z)}{\wf'(\lambda_n)(z-\lambda_n)}.
$$
Since we already know that $\tilde{\Gamma}$ is a sampling sequence for
$\ahc$, we have
\begin{equation}
\label{bnm}
\begin{aligned}
\|f_v\|_{\varphi, \infty} & \asymp \sup_{\gamma_m\in \widetilde{\Gamma}}
e^{-\phi(\gamma_m)}\bigg| \sum_n v_n 
\frac{\wf(\gamma_m)}{\wf'(\lambda_n)(\gamma_m-\lambda_n)} \bigg| \\
& = 
\sup_{\gamma_m\in \widetilde{\Gamma}}
\Big| \sum_n v_n e^{-\phi(\lambda_n)} B_{n,m} \Big|,
\end{aligned}
\end{equation}
where
$$
B_{n,m} =  e^{\phi(\lambda_n)-\phi(\gamma_m)}
\frac{\wf(\gamma_m)}{\wf'(\lambda_n)(\gamma_m-\lambda_n)}.
$$
Note that $|\wf(\gamma_m)| \asymp \gamma_m|F(\gamma_m)|$ 
and $|\wf'(\lambda_n)| \asymp |\lambda_nF'(\lambda_n)|$, and so 
$$
|B_{n, m}| = e^{\phi(\lambda_n)-\phi(\gamma_m)} |A_{n, m}|\cdot\frac{\gamma_m}
{|\lambda_n|} \cdot 
\frac{\|\bk_{\gamma_m}\|_{\varphi,2}}{\|\bk_{\lambda_n}\|_{\varphi,2}} 
\asymp |A_{n, m}|,
$$
where $A_{n,m}$ are defined in the previous section.
By \eqref{matrix} and (c),
$|A_{n,m}| \lesssim \exp (- \delta|n-m|)$ for some $\delta >0$.
Hence, 
$$
\|f_v\|_{\phi, \infty} \le \|v\|_{\phi, \infty, \wl} 
\sup_{\gamma_m\in \widetilde{\Gamma}} \sum_{\lambda_n\in\wl} |B_{n,m}|\lesssim
\|v\|_{\phi, \infty, \wl},
$$ 
so that an interpolating function $f_v$ exists for every finitely supported sequence $v$ with
uniform control depending on $\|v\|_{\phi, \infty, \wl}$.
It remains to apply a normal family argument to show that such an interpolating function
$f_v$ exists for arbitrary $v$ with $\|v\|_{\phi, \infty, \wl} < \infty$. 
\\


{\it $\wl$ is a complete interpolating sequence for $\ahc$ $\Longrightarrow$
$\Lambda$ is complete interpolating for $\ahd$.}
\smallskip

We need to show that the sequence $\Lambda$ satisfies the conditions (a)--(c).
By Corollary~\ref{lemma2x}, $\Lambda$ is $d_\rho$-separated. Also, if 
the sequence $(\delta_n)$ is unbounded, then, as in the proof of necessity part 
of Theorem \ref{BaseRiesz},  there exists a subsequence 
$|A_{n_k, m_k}| \to \infty$. Since $|A_{n_k, m_k}| \asymp |B_{n_k, m_k}|$, 
it follows from \eqref{bnm} that the interpolation operator is unbounded 
(choose the data $v_{\lambda} = e^{\phi(\lambda)}$ for $\lambda = \lambda_{n_k}$
and $v_\lambda = 0$ otherwise).

It remains to prove (c). As in Section \ref{section7} define $\eps_N$ 
by \eqref{epd}. If the sequence $N\eps_N$ is unbounded then again
there exists a subsequence $|A_{n_k, m_k}| \to \infty$. Finally, if 
the sequence $N\eps_N$ is bounded, then, analogously to the proof of (c) 
in Section \ref{section7} we can show 
that there exist arbitrary large $N$, $m_N$ and $n_N$ such that for 
$1\le K\le N$ we have
$|B_{n_N+K, m_N}| \asymp |A_{n_N+K, m_N}| \gtrsim 1$.
It is clear from \eqref{bnm} that in this case the interpolation operator
is unbounded (choose the data $v_n e^{-\phi(\lambda_n)} \in \ell^\infty$
such that $v_n e^{-\phi(\lambda_n)} B_{n, m_N} \asymp |B_{n,m_N}|$ for
$n_N+1 \le n \le n_N+N$).
\end{proof}


\section{Proof of the density results}
\label{section4}

\subsection{Sufficient conditions}

First we deduce the sufficient conditions of Theorems \ref{thm1}--\ref{thm-int} 
from Theorems \ref{BaseRiesz} and \ref{infin}.

These conditions follow immediately from Theorem \ref{denscomp} which we recall
and prove here.

\begin{thm*}
 Let $\varphi(r) = \alpha(\log^+r)^2$, let $p=2, \infty$, and let $\Lambda$ be 
 a $d_\rho$-separated sequence. Then

 {\rm (i)} if $D^+(\Lambda) < 2\alpha$, then $\Lambda$ is a subset of some complete 
 interpolating sequence in $\ahp$\textup;

 {\rm (ii)} if  $D^-(\Lambda) > 2\alpha$, then $\Lambda$ contains a complete 
 interpolating sequence in $\ahp$. 
\end{thm*}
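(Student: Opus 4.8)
The plan is to reduce everything to Theorem \ref{BaseRiesz} (and Theorem \ref{infin} for the $p=\infty$ case, which by that theorem amounts to adjusting $\Lambda$ by a single point). So I will work with $p=2$ and describe how to enlarge or shrink a $d_\rho$-separated sequence $\Lambda$ to a sequence of the form $\{\gamma_n e^{\delta_n} e^{i\theta_n}\}$ satisfying conditions (a), (b), (c). The first reduction is to pass to the moduli: since rotations do not affect membership in the class of complete interpolating sequences (an immediate consequence of Theorem \ref{BaseRiesz}, noted in the text), I may forget the arguments $\theta_n$ entirely and think of $\Lambda$ as a point set on $\RR_+$ whose elements, written $|\lambda| = e^{t(\lambda)}$, are $c$-separated in the variable $t$ (this is exactly $d_\rho$-separation here). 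The densities $D^\pm(\Lambda)$ translate, after the substitution $R = e^s$, into the upper/lower Beurling-type densities of the countable set $\{t(\lambda)\}\subset\RR$ relative to the arithmetic progression $\frac{1}{2\alpha}\NN$; the critical value $2\alpha$ corresponds to the progression having step $\frac{1}{2\alpha}$, i.e. to $\Gamma$.

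\medskip

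\emph{Part (i): completing $\Lambda$ when $D^+(\Lambda) < 2\alpha$.} Choose $\alpha' $ with $\alpha < \alpha' $ close to $\alpha$ and also some margin so that $D^+(\Lambda) < 2\alpha'' < 2\alpha$ for an intermediate $\alpha''$; the idea is that $\Lambda$ is "sparser than $\Gamma$'' in the $t$-variable, so there is room to insert extra points. Concretely, I would fix the block length $N$ from condition (c) large, work on consecutive intervals $J_j$ of length $N/(2\alpha)$ in the $t$-line, and in each block count the number $k_j = \Card(\Lambda \cap J_j)$ of existing points; by the density hypothesis (together with the a priori bound from the Lemma in Section \ref{section3} that every unit $t$-interval carries $O(1)$ points) we have $k_j \le N - \ell$ for some fixed surplus $\ell \ge 1$ and all large $j$. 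I then add $N - k_j$ new points inside $J_j$, placed so that (a) the resulting full sequence is $d_\rho$-separated — which is possible because each block has slack room, as $k_j < N$ strictly, and one can spread points out with gaps bounded below — and so that (b)–(c) hold: since after filling each block has exactly $N$ points in an interval of length $N/(2\alpha)$, the average gap in each block is exactly $1/(2\alpha)$, and then writing the $n$-th point of the completed sequence as $\gamma_n e^{\delta_n}$ (with $\gamma_n = e^{(n+1)/(2\alpha)}$), a telescoping/averaging computation shows $\big|\sum_{k=n+1}^{n+N}\delta_k\big|$ is bounded by the length discrepancy over a window plus the at most $O(1)$ boundary error, hence $\le \delta N$ with $\delta < 1/(4\alpha)$ once $N$ is chosen large relative to the $O(1)$. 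Boundedness of $(\delta_n)$ follows from the $O(1)$-per-unit-interval bound and from $\Lambda$ being confined (after reindexing) between consecutive $\gamma$'s up to bounded shift. Theorem \ref{BaseRiesz} then certifies the completed sequence as complete interpolating. For $p=\infty$, apply Theorem \ref{infin}: complete $\Lambda$ to a complete interpolating sequence for $\ahd$ and throw in one more point, or equivalently complete $\Lambda\setminus\{\lambda_0\}$ and observe $\Lambda$ sits inside the resulting $\ahc$-complete-interpolating sequence.

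\medskip

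\emph{Part (ii): reducing $\Lambda$ when $D^-(\Lambda) > 2\alpha$.} This is the dual operation and, I expect, the main technical obstacle, because deleting points can in principle destroy the block-average control if the deletions are badly distributed. Here $\Lambda$ is "denser than $\Gamma$'', so in the $t$-variable every sufficiently long block $J_j$ of length $N/(2\alpha)$ contains at least $N + \ell$ points of $\Lambda$ for a fixed surplus $\ell\ge1$ and all large $j$. The strategy is a greedy selection: sweep the $t$-line from the origin, and maintain a running target that the $n$-th chosen point should sit near $\gamma_n$ (i.e. near $t = (n+1)/(2\alpha)$); at each step pick from $\Lambda$ the point closest to the current target that also respects the separation constraint with the previously chosen point. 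The density surplus guarantees we never "run out'' of candidates (there are always more available points ahead than targets), so the selected subsequence $\Lambda' = \{\lambda'_n\}$ stays within bounded $t$-distance of $\{\gamma_n\}$, giving boundedness of $(\delta_n)$, i.e. (b); it is $d_\rho$-separated by construction, i.e. (a); and the key point for (c) is that the accumulated offset $\sum_{k\le n}\delta_k$ — which measures how far behind or ahead the $n$-th selected point is from its target $\gamma_n$ — stays bounded, since the greedy rule keeps pulling the selection back toward the target and the density surplus prevents systematic lag; a bounded accumulated offset immediately yields $\big|\sum_{k=n+1}^{n+N}\delta_k\big| = O(1) \le \delta N$ for $N$ large, with $\delta < 1/(4\alpha)$, so (c) holds. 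Then Theorem \ref{BaseRiesz} finishes $p=2$, and Theorem \ref{infin} finishes $p=\infty$ exactly as above (select a complete interpolating subsequence for $\ahd$ and adjoin one of the remaining points, or note that a complete interpolating sequence for $\ahc$ is such an $\ahd$-sequence plus one point). The delicate part to nail down carefully is the claim that the greedy procedure keeps the partial sums $\sum_{k\le n}\delta_k$ bounded — this is where the quantitative form of $D^-(\Lambda)>2\alpha$ (uniform surplus $\ell$ on all large blocks, not merely in the limit) is essential, and one must be careful that short-scale fluctuations of $\Lambda$ do not accumulate.
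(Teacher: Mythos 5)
Your overall strategy (reduce to Theorems \ref{BaseRiesz} and \ref{infin}, pass to the logarithmic variable, and complete or thin $\Lambda$ block by block) is the same as the paper's, but there is a genuine gap at the decisive step in both parts: you treat condition (c) as if it followed from getting the \emph{number} of points right per block, whereas (c) is a statement about the signed sums of the displacements $\delta_k$, which depend on \emph{where} the points sit inside the blocks. In part (i) your prescription -- add exactly $N-k_j$ points inside each block $J_j$ of logarithmic length $N/(2\alpha)$ -- can fail outright. Take $\Lambda$ whose points in every block form a tight cluster (at some small but fixed separation $d$) of $N-1$ points at the inner edge of the block; then $D^+(\Lambda)=2\alpha(1-1/N)<2\alpha$, but after your completion the $k$-th point of each block has $\delta\approx -k/(2\alpha)$, so every block sum is $\approx -N^2/(4\alpha)$ with the same sign, the single added point can offset this by at most $O(N)$, and every window average is $\approx -N/(4\alpha)$, violating (c) for \emph{every} window length. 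So no placement within your scheme works; the completion must be allowed to distribute the new points unevenly across many blocks (in this example one has to withhold them over roughly $N/2$ blocks, creating a controlled global index offset), and one then needs an argument that such a compensating distribution exists. This is exactly what the paper's proof supplies and what your proposal lacks: a two-scale construction ($M$-blocks, with at most $M-1$ points of $\Lambda$ each, grouped into groups of $N\gg M$ blocks, the new points placed in point-free sub-annuli $B_m$ anywhere in the group) together with a discrete intermediate-value argument borrowed from Seip -- the two extreme placements make the group sum of $\delta$'s large negative resp.\ large positive, moving one point to a neighbouring $B_m$ changes the sum by at most $2M$, hence some intermediate placement makes the group sum $O(M)$, which is small compared with the group length $NM$.

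The same issue undermines part (ii): your greedy selection gives no reason why the accumulated offset $\sum_{k\le n}\delta_k$ stays bounded -- if the available points of $\Lambda$ are systematically clustered on one side of the targets, the greedy choice is forced and the lag can grow linearly -- and you yourself flag this as the unproved "delicate part". The paper does not use a greedy/ergodic argument at all; it again produces two extreme admissible selections (taking the points for the first, resp.\ last, indices of each group from far-away blocks) whose group sums have opposite signs, and interpolates between them one point at a time, each swap changing the sum by $O(M)$, to land on a selection satisfying (c). Your reduction of the $p=\infty$ case to the $p=2$ case via Theorem \ref{infin} (adjoining or deleting a single point) is fine and agrees with the paper, but as it stands neither (i) nor (ii) is proved: the missing ingredient is precisely the compensation mechanism, i.e.\ the Seip-type discrete intermediate-value argument over configurations.
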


We give a proof for the case $p=2$; the proof for the case $p=\infty$
is completely analogous. To simplify the notations, we choose  $\alpha=1/2$. 
Recall that with our choice of $\alpha$ the set $\Gamma = \{\gamma_n\} = 
\{e^{n}\}_{n\in \mathbb{N}}$ 
becomes a complete interpolating sequence for the space.

\begin{proof}[Proof of (i)]
%
It follows from the condition $D^+(\Lambda) < 1$ 
that for sufficiently large $M>0$, every annulus
$$
A_m = \big\{z: e^{Mm+\frac{1}{2}} < |z| < e^{M(m+1)+\frac{1}{2}}\big\}, \qquad m\ge 0,
$$
contains at most $M-1$ points from $\Lambda$. Fix such $M$. Furthermore, there
exists an $\eta>0$ such that each $A_m$ contains an annulus $B_m$ of width $\eta$
which contains no points of $\Lambda$.

Our goal is to add some sequence $\Lambda'$ to $\Lambda$ so that 
the new sequence $\Lambda\cup \Lambda'$ could be written as 
$\gamma_n e^{\delta_n}e^{i\theta_n}$ and for some $N$ we would have  
\begin{equation}
\label{riesz}
\sup_n\frac{1}{N}\Big|\sum_{k=n}^{n+N}\delta_k\Big|\leq \delta<\frac{1}{4\alpha}=\frac{1}{2}.
\end{equation}

Let us denote the points from $\Lambda \cap A_m$ by
$\lambda_{1}^m, \dots, \lambda_{l_m}^m$ 
(we of course assume that $\lambda_l$ are ordered so that the modulus is nondecreasing), 
and let us associate with each of them some point from 
$\Gamma \cap A_m$. E.g., let us write
$$
\lambda_l^m = e^{Mm+ l }e^{\delta_{Mm+l}} e^{i \theta_{Mm+ l}}, \qquad 
1 \le l \le l_m.
$$
In each annulus $A_m$ we still have at least one point
from $\Gamma \cap A_m$ to which nothing is associated.

We now take a large number $N$ (the choice will be specified later) and consider
the groups of the annuli $A_m$, namely put
$$
\tilde A_k = \bigcup\limits_{m=kN+1}^{kN+N} A_m, \qquad k\ge 0. 
$$
Now in the whole group of annuli $\tilde A_k$ there are at least 
$N$ free points of 
$\Gamma$ to which we need to assign some element of the sequence $\Lambda'$ 
that we want to construct. We will do this in such a way that for any $k$
we have 
\begin{equation}
\label{r2}
\bigg|\sum_{n= (kN+1)M+1}^{(kN+N+1)M} \delta_n  \bigg| \le CM,
\end{equation}
for some absolute constant $C$ whence for sufficiently large $N$, \eqref{riesz}
will be satisfied. Thus, from now on, $k$ will be fixed.

We use an idea from the paper \cite{seip} by Seip.
The points of $\Lambda'\cap \tilde A_m$ 
will be chosen within the annuli $B_m$ (of the width $\eta$).
Note that we can even put all missing points in one annulus $B_m$, if we want,
and still have $\rho$-separation, 
but, of course, the separation constant will depend on $\eta$, 
$M$ and $N$ and may be rather 
small. Let us consider all possible sequences $\Lambda' 
\subset \cup_{m=kN+1}^{kN+N} B_m$ with separation constants 
uniformly bounded away from zero, and let us write
the elements of $\Lambda\cup \Lambda'$ as $\gamma_n e^{\delta_n}e^{i\theta_n}$.
Note that for any $m$ and $n= Mm +1, \dots, Mm + l_m$ 
the values $\delta_n$ are already fixed. 
Moreover, since for these $n$ the corresponding $\lambda$-s are in the same annulus 
$A_m$ we have $|\delta_n| \le M$, whence
$$
-M^2 N \le \sum_{m=kN+1}^{kN+N} \sum_{n = Mm +1}^{Mm +l_m} \delta_n \le M^2N.
$$

Now assume that we chose all the points of $\Lambda'$  in the annulus $B_{kN+1}$
(the smallest of all $B_m$ in our group). Then for 
$$
kNM+jM+ l_{kN+j} +1 <  n  \le kNM+(j+1)M, \qquad 2\le j \le N-1,
$$
we have
$$
\delta_n \le -(j-1)M,
$$
whence (using the fact that we have at least $N$ free indices in each $A_m$)
$$
\sum_{j=2}^{N-1} \ \sum_{n= kNM+jM + l_{kN+j} +1}^{kNM+(j+1)M} \delta_n 
\le - \sum_{j=2}^{N-1} (j-1) MN \le -\frac{MN^2}{3},
$$
when $N$ is sufficiently large. Thus,
with this choice of $\Lambda'$ we have
$$
\sum_{n= (kN+1)M+1}^{(kN+N+1)M} \delta_n 
\le  -\frac{M N^2}{3} + O(M^2 N) <0, 
$$
if $N\gg M$.

Analogously, if we choose all the points of $\Lambda'$  in the annulus $B_{kN+N}$
(the largest of all $B_m$ in our group), we will have 
$\delta_n \ge (N-2-j)M$ for $kNM+jM+ l_{kN+j} +1 <  n  \le kNM+(j+1)M$, 
$0\le j\le N-3$, whence
$$
\sum_{n= (kN+1)M+1}^{(kN+N+1)M} \delta_n 
\ge  \frac{M N^2}{3} - O(M^2 N) >0. 
$$

Finally, note that if two choices of $\Lambda'$ coincide up  to one  point
which is in some $B_m$ for one choice and which is in $B_{m+1}$ for the other choice, 
then the corresponding sums
$$
\sum_{n= (kN+1)M+1}^{(kN+N+1)M} \delta_n 
$$
considered for these two choices of $\Lambda'$
will differ by at most $2M$. Since the two configurations of $\Lambda'$
described above may be obtained from the other by changing only one point 
and moving it to a neighboring annulus $B_n$, we conclude 
that there exists some intermediate choice of $\Lambda'$
with the property \eqref{r2} (with $C=2$).
\end{proof}
\begin{proof}[Proof of (ii)]
 The idea is the same and so we may omit some details.
Let $M, N$ be as above, but now we assume that each $A_m$ contains at least 
$M+1$ points for some fixed $M$. Let us assume that $N\gg M$
and choose $j_0 \in \mathbb{N}$ so that $3j_0 M <N \le 3(j_0+1)M$.

For $j_0\le j \le N-j_0$
and $kNM +jM +1 \le n \le kNM +(j+1)M$ we choose in an arbitrary way $\lambda_n \in 
\Lambda\cap A_j$ and write them as
$$
\gamma_n e^{\delta_n}e^{i\theta_n}.
$$ 
Then $|\delta_n|\le M$ and
$$
-(N-2)M^2 \le \sum_{j=j_0}^{N-j_0} \sum_{n= kNM +jM +1}^{kNM +(j+1)M} \delta_n\le  (N-2)M^2.
$$
Note that we did not assign any point from $\Lambda$ to $n$-s  in the first and in 
the last interval, namely, for 
$kNM +1 \le n \le kNM +j_0M$
and for $kNM +(N-j_0)M +1 \le n \le kNM +NM$.

Recall that we still have $N$ free points of $\Lambda$ in each $A_m$.
Now consider two choices of $\lambda_n$ for these values of $n$. 
For the first choice let us assign some points 
$\lambda_n \in \Lambda\cap A_j$ to $kNM +jM +1 \le n \le kNM +(j+1)M$ and 
$N-j_0 \le j\le N-1$. However, for
$kNM +1 \le n \le kNM +j_0M$ let us choose $j_0M$ points $\lambda_n$ 
in $\cup_{j>2N/3} A_j$. This is possible, since we have at least $N/3> j_0M$ 
free points from $\Lambda$ in $\cup_{j>2N/3} A_j$. Then, 
for $kNM +1 \le n \le kNM +j_0 M$, we have
$$
\delta_n \le - \frac{MN}{3},
$$
and hence,
$$
\begin{aligned}
\sum_{n= (kN+1)M+1}^{(kN+N+1)M} \delta_n  
&  =  \sum_{j=0}^{j_0-1} \ \sum_{n=kNM +jM + 1}^{kNM +(j+1) M} \delta_n + 
\sum_{j=j_0}^{N-1} \ \sum_{n=kNM +jM + 1}^{kNM +(j+1) M} \delta_n \\
& \le - \frac{j_0 M^2 N}{3} + O(M^2 N) <0.
\end{aligned}
$$ 

Analogously, choosing the points $\lambda_n \in \Lambda\cap A_j$ 
for $kNM +jM +1 \le n \le kNM +(j+1)M$ and $0\le j\le j_0-1$, and
taking $\lambda_n$ in $\cup_{j<N/3} A_j$ for
$kNM +(N-j_0)M +1 \le n \le kNM +NM$, we see that the 
corresponding sum of $\delta_n$ is positive. 

The proof is completed as in (i): each configuration $\Lambda_n$ may 
be obtained from the other by changing exactly on point at each step, 
and, moreover, these points can be chosen at the distance (with respect 
to the logarithm) at most $2M$. Thus, the corresponding sum will be at most $4M$
for some choice of $\{\lambda_n\} \subset \Lambda$.
\end{proof}


\subsection{Necessary conditions for sampling/interpolation, $p=2,\infty$, Theorems 
\ref{thm1}, \ref{thm2} and \ref{thm-int}}

To obtain the necessary conditions  for the sequence to be  sampling/interpolating, 
we use the technique developed by Ramanathan and Steger \cite{OS,MMO}.  
We follow the scheme of proof proposed in \cite[Lemma 40]{MMO} and 
concentrate mainly on the places where the proofs differ.

\begin{lem}\label{InterSamplRS}Let $\varepsilon>0$. Assume $\Lambda $ to be interpolating for   $\mathcal{F}_{(1-\varepsilon)\varphi}^{p}$, $p=2,\infty$,   and $\cS$ to be sampling for 
 $\mathcal{F}_{\varphi}^{2}$  and $d_\rho$-separated. Then  for small $\delta>0$ we have  
for sufficiently big $R$,
$$
(1-\delta^2)\Card (\Lambda\cap\cA(x,Rx))\leq  \Card (\cS\cap\cA(\delta x,Rx/\delta)).
$$
\end{lem}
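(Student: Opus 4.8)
The plan is to follow the Ramanathan--Steger comparison scheme as in \cite[Lemma 40]{MMO}, adapted to the multiplicative/annular geometry of our spaces. The key idea is to compare, via a trace (rank) argument, the number of points of the interpolating sequence $\Lambda$ and of the sampling sequence $\cS$ that fall inside a common annulus, by localizing a projection-like operator.

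First I would fix a large annulus $\cA = \cA(x,Rx)$ and introduce the restriction operator built from the reproducing kernels of $\ahd$ at the points of $\cS$. Since $\cS$ is sampling and $d_\rho$-separated, the family $\{\K_s\}_{s\in\cS}$ is a frame for $\ahd$, and the frame operator $S f = \sum_{s\in\cS}\langle f,\K_s\rangle \K_s$ is bounded and invertible. Using $\cS$ sampling for $\ahd$ and $\Lambda$ interpolating for $\mathcal{F}_{(1-\eps)\varphi}^p$, one has good control: functions concentrated near $\cA$ at the scale of the metric $d_\rho$ have negligible mass outside a slightly dilated annulus $\cA(\delta x, Rx/\delta)$, because of the super-exponential decay captured in \eqref{kernel-estimate} and Lemma~\ref{lem1}(iii). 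Concretely, for $\lambda\in\Lambda\cap\cA(x,Rx)$ I would take the dual interpolating functions (or normalized reproducing kernels) and show that, when expanded against the frame $\{\K_s\}$, the coefficients corresponding to $s\notin\cA(\delta x,Rx/\delta)$ contribute at most $\delta^2$ in norm — this is the step where the $(1-\delta^2)$ factor originates.

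Then I would consider the operator $T = P_{\cS'} S^{-1} P_{\cS'}^* $ restricted appropriately, where $\cS' = \cS\cap\cA(\delta x, Rx/\delta)$, acting on the finite-dimensional span $V$ of $\{\K_\lambda : \lambda\in\Lambda\cap\cA(x,Rx)\}$ (or rather the biorthogonal system). On one hand, $\dim V = \Card(\Lambda\cap\cA(x,Rx))$ because $\Lambda$ interpolating forces linear independence with uniform lower frame bound on $V$; on the other hand, the rank of the localized operator is at most $\Card(\cS')= \Card(\cS\cap\cA(\delta x,Rx/\delta))$. Comparing the trace of $T$ on $V$ (which is $\geq (1-\delta^2)\dim V$ by the localization estimate) with its rank bound yields
$$
(1-\delta^2)\Card(\Lambda\cap\cA(x,Rx)) \leq \operatorname{trace}(T) \leq \operatorname{rank}(T)\cdot\|T\| \leq \Card(\cS\cap\cA(\delta x,Rx/\delta)),
$$
after normalizing so that $\|T\|\le 1$.

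The main obstacle I anticipate is the localization estimate, i.e.\ showing quantitatively that the reproducing kernels at far-away points of $\cS$ contribute a total of at most $\delta^2$ to the frame expansion of functions built from $\Lambda\cap\cA(x,Rx)$. This requires a precise off-diagonal decay estimate for $\langle \K_\lambda, \K_s\rangle$ in terms of $d_\rho(\lambda,s)$, which should follow from Lemma~\ref{FonctionBL} and the kernel estimate \eqref{kernel-estimate} together with summation over the $d_\rho$-separated set $\cS$; the geometry is multiplicative (annuli $\cA(r,Rr)$ rather than balls), so one must be careful that the dilation by $\delta$ and $1/\delta$ is exactly what is needed to absorb the tails, and that all constants are uniform in $R$ and $x$ for $R$ large. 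The use of $\mathcal{F}_{(1-\eps)\varphi}^p$ rather than $\mathcal{F}_\varphi^p$ for the interpolation hypothesis gives the small room in the weight needed to make these tail sums genuinely small.
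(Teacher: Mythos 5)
Your overall architecture---the Ramanathan--Steger comparison with two finite-dimensional subspaces indexed by $\Lambda\cap\cA(x,Rx)$ and $\cS\cap\cA(\delta x,Rx/\delta)$, a localized operator, and the inequality ``trace $\le$ rank'' played against a lower bound of the form $(1-o(1))\Card(\Lambda\cap\cA(x,Rx))$---is exactly the scheme of the paper. But the step you yourself flag as the main obstacle is where your plan breaks, and the fix is not an off-diagonal estimate for $\langle \K_\lambda,\K_s\rangle$. First, \eqref{kernel-estimate} is purely diagonal, and no off-diagonal kernel estimate is available (or needed) in the paper. Second, and more seriously, the hypotheses do not give what you use on the $\Lambda$-side: $\Lambda$ is only assumed interpolating for the \emph{smaller} space $\mathcal{F}_{(1-\varepsilon)\varphi}^{p}$, and for $p=\infty$ this is a purely sup-norm hypothesis; neither ``linear independence with uniform lower frame bound'' of $\{\K_\lambda\}_{\lambda\in\Lambda\cap\cA(x,Rx)}$ in $\ahd$ nor a uniformly norm-bounded system biorthogonal to these kernels follows from it. Without such a uniformly bounded biorthogonal family, your lower bound $\mathrm{tr}(T)\ge(1-\delta^2)\Card(\Lambda\cap\cA(x,Rx))$ is unjustified, so the proposal has a genuine gap precisely at its central estimate.

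The missing idea is a transfer construction. One takes the interpolating functions $f_\lambda\in\mathcal{F}_{(1-\varepsilon)\varphi}^{p}$ with $f_\lambda(\lambda)=1$ and $f_\lambda|_{\Lambda\setminus\{\lambda\}}=0$, and multiplies them by a peak factor built from the generating function $G$ of the auxiliary sequence $\Gamma_{\varepsilon\alpha}$, e.g.\ for $p=\infty$
$$\kappa(z,\lambda)= f_\lambda(z)\,\frac{G(z)}{z-\gamma_\lambda}\,\frac{\lambda-\gamma_\lambda}{G(\lambda)}\,\|\bk_\lambda\|_{\varphi,2},$$
with an extra factor $z/\lambda$ when $p=2$. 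Since the factor has growth $\varepsilon\varphi$, one gets $\kappa(\cdot,\lambda)\in\ahd$ with uniformly bounded norms (checked by evaluating on the sampling sequence $\Gamma_\alpha$ via Lemma \ref{FonctionBL}); biorthogonality to $\{\K_\lambda\}_{\lambda\in\Lambda}$ is automatic from the vanishing of $f_\lambda$ on $\Lambda\setminus\{\lambda\}$; and the same factor delivers the quantitative localization $\frac{|\kappa(s,\lambda)|^2}{\|\bk_s\|_{\varphi,2}^2}\lesssim \frac{|\lambda|}{1+|s|}\,\frac{\dist(s,\Gamma_{\varepsilon\alpha})^2}{|s-\gamma_\lambda|^2}$, whose tail over $s\in\cS\setminus\cA(\delta x,Rx/\delta)$ is a geometric-type sum of size $O(\delta)$ thanks to the $d_\rho$-separation of $\cS$. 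This is also the true role of the $\varepsilon$-room: it is not there to absorb tails of a kernel expansion, but to allow multiplication by a function of growth $\varepsilon\varphi$ so that the interpolation hypothesis in $\mathcal{F}_{(1-\varepsilon)\varphi}^{p}$ (including $p=\infty$, where no Hilbert-space structure is assumed) is converted into a uniformly bounded, well-localized system in $\ahd$. With $\kappa(\cdot,\lambda)$ in hand, the trace-versus-rank computation you outline (using on the $\cS$-side the dual frame of the normalized kernels at $\cS$, which is essentially your $S^{-1}$) goes through for both values of $p$ simultaneously.
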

\begin{proof}
Let $p=2$. Since $\Lambda $ is interpolating for 
$\mathcal{F}_{(1-\varepsilon)\varphi}^{2}$, for every $\lambda\in \Lambda $ 
there exists $f_{\lambda}\in \mathcal{F}_{(1-\varepsilon)\varphi}^{2}$, such that $f_\lambda(\lambda)=1$, $f_\lambda|\Lambda \backslash\{\lambda\}=0$ and 
$ \|f_\lambda\|_{(1-\varepsilon)\varphi,2}\lesssim |\lambda|e^{-(1-\varepsilon)\varphi(\lambda)}$. 
By \eqref{kernel-estimate},  
\begin{eqnarray}\label{estimflambda}
 |f_\lambda(z)|=|\langle f_{\lambda},\bk_z\rangle|
 \lesssim e^{(1-\varepsilon)(\varphi(z)-\varphi(\lambda))}|\lambda|/(1+|z|).
\end{eqnarray}
Let $G$ be the function from Lemma \ref{FonctionBL} associated to 
$\Gamma:=\Gamma_{\varepsilon\alpha}=
\{e^\frac{n+1}{2\varepsilon \alpha}e^{i\theta_n}\text{ : } n\geq 0\}$, and let $\gamma_\lambda\in \Gamma$  be a point such that $\dist(\lambda,\Gamma)=|\lambda-\gamma_\lambda|$. Then $|\lambda|\asymp\gamma_\lambda$. With an appropriate choice of $\theta_n$ we can
assume that $d_{\rho}(\Lambda,\Gamma)>0$. Define
$$
\kappa(z,\lambda):=
 \left\{\begin{array}{ll}
  f_\lambda(z)\frac{\dst G(z)}{\dst z-\gamma_\lambda}
 \frac{\dst \lambda-\gamma_\lambda}{\dst G(\lambda)}
 \frac{\dst z}{\dst \lambda}
 \|\bk_\lambda\|_{\varphi,2} &\text{if }z\in \C\setminus\{\gamma_\lambda\},\\
  {G'(\lambda)}\frac{\dst \lambda-\gamma_\lambda}{\dst G(\lambda)}\|\bk_\lambda\|_{\varphi,2} &\text{if }z=\gamma_\lambda.
 \end{array}
 \right.
$$
By construction, $\kappa(\cdot,\lambda)\in \ahd$, $\lambda\in \Lambda$, and the system $\{\kappa(\cdot,\lambda)\}_{\lambda\in \Lambda}$ is 
biorthogonal to $\{\K_\lambda\}_{\lambda\in \Lambda }$.
Moreover $\|\kappa(\cdot,\lambda)\|_{\varphi,2}$ is uniformly bounded.
To verify this, it suffices to estimate this 
norm on the Borichev--Lyubarskii sampling sequence $\Gamma_{\alpha}$ with 
$\dist(\gamma,\Gamma_{\varepsilon \alpha})\asymp 
|\gamma|$, $\gamma\in \Gamma_\alpha$, using 
Lemma \ref{FonctionBL} and \eqref{estimflambda}:
\begin{eqnarray}\label{estimkappa}
\lefteqn{\|\kappa(\cdot,\lambda)\|^2_{\varphi,2}
 \asymp\sum_{\gamma\in\Gamma_{\alpha}}
 |\kappa(\gamma,\lambda)|^2e^{-2\varphi(\gamma)}(1+|\gamma|^2)}\nonumber\\
 &&\asymp \sum_{\gamma\in\Gamma_{\alpha}}
 |f_{\lambda}(\gamma)|^2\frac{e^{2\varepsilon\varphi(\gamma)}\dist^2(\gamma,
 \Gamma_{\varepsilon\alpha})}{(1+|\gamma|^3)|\gamma-\gamma_{\lambda}|^2}
 \frac{|\lambda-\gamma_{\lambda}|^2(1+|\lambda|^3)}{e^{2\varepsilon\varphi(\lambda)}
 \dist^2(\lambda,\Gamma_{\varepsilon\alpha})}\times \left|\frac{\gamma}{\lambda}\right|^2
\frac{e^{2\varphi(\lambda)}}{1+|\lambda|^2}
 e^{-2\varphi(\gamma)}(1+|\gamma|^2)\nonumber\\
 &&\lesssim  \sum_{\gamma\in\Gamma_{\alpha}}
 \frac{\dist^2(\gamma,\Gamma_{\varepsilon\alpha})}{|\gamma-\gamma_{\lambda}|^2}
 \frac{1+|\lambda|}{1+|\gamma|}\lesssim  \sum_{\gamma\in\Gamma_{\alpha},|\gamma|\le
 |\lambda|}
 \frac{1+|\gamma|}{1+|\lambda|}+
  \sum_{\gamma\in\Gamma_{\alpha},|\gamma|>|\lambda|}
 \frac{1+|\lambda|}{1+|\gamma|}.
 \end{eqnarray}
Both sums are majorized by the sum of a geometric progression, and, 
hence, are uniformly bounded.

Let now  $\{\widetilde{k}(\cdot,s)_{s\in \cS }\}$ be the dual frame 
(see, for example, \cite{ccc}) for  
$\{\bk_s/\|\bk_s\|_{\varphi,2}\}_{s\in \mathcal{S}}$ in $\mathcal{F}_{\varphi}^{2}$.  Consider the  following
finite dimensional subspaces of $\mathcal{F}_{\varphi}^{2}$: 
$$W_{\cS}=\{\widetilde{k}(\cdot,s)\text{ : }s\in \cS\cap\cA(\delta x,Rx/\delta) \}\quad \text{ and } \quad  W_{\Lambda }=\{\kappa(\cdot,\lambda)\text{ : }\lambda\in \Lambda \cap\cA(x,Rx) \}. $$
We define
$P_{\cS} $ and $P_\Lambda $ as the orthogonal projections of     $\mathcal{F}_{\varphi}^{2}$  onto $W_{\cS} $ and $W_\Lambda $, respectively.
Consider the operator $T = P_\Lambda  P_{\cS} $ defined from $W_\Lambda $ to $W_\Lambda $. 
Clearly
\[
 \textrm{tr}(T)  
 \leq \textrm{rank}P_{\cS} \leq  \Card (\cS \cap\cA(\delta x,Rx/\delta)).
\]
Since $(P_{\Lambda}\K_{\lambda})_{\lambda}$ is biorthogonal to
$(\kappa(\cdot,\mu))_{\mu\in\Lambda\cap \cA(x,Rx)}$ in $W_{\Lambda}$,
we have
$$\textrm{tr}(T) =\sum_{\lambda\in \Lambda \cap\cA(x,Rx)} \langle T\kappa(\cdot,\lambda),P_{\Lambda}\K_{\lambda} \rangle.
$$
Hence, since $\langle T\kappa(\cdot,\lambda),P_{\Lambda}\K_{\lambda}\rangle
=\langle\kappa(\cdot,\lambda)+(P_{\mathcal{S}}-Id)\kappa(\cdot,\lambda),P_{\Lambda}\K_{\lambda}\rangle$, we also get
\[
  \textrm{tr}(T)  \ge \Card (\Lambda \cap\cA(x,Rx))[1-\sup_{\lambda} \|P_{\cS} (\kappa(\cdot,\lambda))-\kappa(\cdot,\lambda)\|_{\varphi,2} ] 
\]
It remains to verify that $\|P_{\cS} (\kappa(\cdot,\lambda))-\kappa(\cdot,\lambda)\|_{\varphi,2}$
are small for sufficiently small $\delta$ independently of $\lambda$. By Lemma  \ref{FonctionBL} we have 
\begin{equation}
\label{kappa}
\frac{|\kappa(s,\lambda)|^2}{\|\bk_s\|^{2}_{\varphi,2}}\lesssim\frac{|\lambda|}{1+|s|}
\frac{\dist (s,\Gamma_{\varepsilon\alpha})^2}{|s-\gamma_\lambda|^2}.
\end{equation}
Since $\cS$ is sampling and $d_\rho$-separated, and since $\lambda\in \cA(x,Rx)$, we have,
using also $|\lambda|\asymp |\gamma_{\lambda}|$,   
\begin{eqnarray*}
\|P_{\cS} (\kappa(\cdot,\lambda))-\kappa(\cdot,\lambda)\|_{\varphi,2}&\lesssim& \sum_{s\notin\cA(\delta x,Rx/\delta )}\frac{|\kappa(s,\lambda)|^2}{\|\bk_s\|^{2}_{\varphi,2}}\\
&\lesssim&
\sum_{s\notin\cA(\delta x,Rx/\delta )}\frac{|\gamma_\lambda|}{1+|s|}
\frac{\dist (s,\Gamma_{\varepsilon\alpha})^2}{|s-\gamma_\lambda|^2} \\
&\lesssim&  \sum_{|s|\leq |\gamma_\lambda|,\, s\notin\cA(\delta x,Rx/\delta ) }\frac{|s|}{|\gamma_\lambda|} +  \sum_{|s|\geq |\gamma_\lambda|,\, s\notin\cA(\delta x,Rx/\delta ) }\frac{|\gamma_\lambda|}{|s|}.
\end{eqnarray*}
For $R$ big enough, we can suppose $\gamma_{\lambda}\in \cA(x,Rx)$. Since
$\cS$ is separated, each annulus $\cA_k:=\cA(2^k,2^{k+1})$ contains
a uniformly bounded number of points of $\cS$. Let $M$ be an upper bound
of these numbers. Let $N,K$ be such that $2^N\le \delta x\le 2^{N+1}$
and $2^K\le Rx/\delta \le 2^{K+1}$. 
Then an estimate analogous to the above yields:
\begin{eqnarray*}
\sum_{|s|\leq |\gamma_\lambda|,\, s\notin\cA(\delta x,Rx/\delta ) }\frac{|s|}{|\gamma_\lambda|}
 &\lesssim& \frac{1}{|\gamma_{\lambda}|}\sum_{k=1}^N\sum_{s\in \cA_k}|s|
 \lesssim \frac{M2^N}{|\gamma_{\lambda}|}\lesssim \delta,\\
\sum_{|s|\ge |\gamma_\lambda|,\, s\notin\cA(\delta x,Rx/\delta ) }\frac{|\gamma_\lambda|}{|s|}
 &\lesssim& {|\gamma_{\lambda}|}\sum_{k\ge K}\sum_{s\in \cA_k}\frac{1}{|s|}
 \lesssim \frac{M|\gamma_{\lambda}|}{2^K}\lesssim \delta,\\
\end{eqnarray*}
and we are done.

Now let $p=\infty$. Since $\Lambda $ is interpolating for 
$\mathcal{F}_{(1-\varepsilon)\varphi}^{\infty}$, then for every $\lambda\in \Lambda $, 
there exists $f_{\lambda}\in \mathcal{F}_{(1-\varepsilon)\varphi}^{\infty}$, such that $f_\lambda(\lambda)=1$, $f_\lambda|\Lambda \backslash\{\lambda\}=0$ and 
$\|f_\lambda\|_{(1-\varepsilon)\varphi,\infty}\lesssim e^{-(1-\varepsilon)\varphi(\lambda)}$,
i.e., 
$|f_\lambda(z)|\lesssim  e^{(1-\varepsilon)(\varphi(z)-\varphi(\lambda))}$. Set 
$$\kappa(z,\lambda)= f_\lambda(z)\frac{G(z)}{z-\gamma_\lambda}\frac{\lambda-\gamma_\lambda}{G(\lambda)}\|\bk_\lambda\|_{\varphi,2}, \qquad z\in \C.$$
(Observe that we do not need the factor $z/\lambda$ in this case.)
As in \eqref{estimkappa} it can be shown that $\kappa(\cdot,\lambda)$ is in 
$\mathcal{F}_{\varphi}^{2}$ with uniformly bounded norms.
Again by Lemma \ref{FonctionBL} we have \eqref{kappa} and applying the same arguments as above
(note that $\mathcal{S}$ is again assumed sampling in $\mathcal{F}_{\varphi}^{2}$), 
we get our result.
\end{proof}

$\bullet$ Let us first deduce from this lemma the necessary condition for sampling in Theorem \ref{thm2} ($p=2$). Suppose that $\Lambda$ is a sampling sequence for  $\mathcal{F}_{\varphi}^{2}$. 
By Corollary \ref{lemma2} and Lemma \ref{lemma3},  
$\Lambda$  is a finite union of $d_\rho$-separated subsets and 
contains a $d_\rho$-separated subset $\Lambda^*$ which is also a sampling set for $\mathcal{F}_{\varphi}^{2}$.  
Let  $\Gamma=\Gamma_{(1-\varepsilon)\alpha}=
\{e^\frac{n+1}{2(1-\varepsilon )\alpha}e^{i\theta_n}\text{ : } n\geq 0\}$ for some $\varepsilon>0$. We have $D^-(\Gamma)=D^+(\Gamma)=2(1-\varepsilon)\alpha$. By \cite[Theorem 2.8]{BL}, $\Gamma$ is an interpolating sequence for  $\mathcal{F}_{(1-\varepsilon)\varphi}^{2}$ and the comparison Lemma \ref{InterSamplRS} gives $D^-(\Lambda^*)\ge 2\alpha$.  
\medskip

$\bullet$ The necessary condition for sampling in Theorem \ref{thm1} ($p=\infty$) 
follows from Corollary \ref{lemma2}, Lemma \ref{passageinfty2}, 
and the necessary condition for sampling in  Theorem \ref{thm2}.
\medskip

$\bullet$ Next we consider the necessary condition for interpolation 
in Theorem \ref{thm-int}. Consider   $\Gamma=\Gamma_{(1+\varepsilon)\alpha}=
\{e^\frac{n+1}{2(1+\varepsilon )\alpha}e^{i\theta_n}\text{ : } n\geq 0\}$ 
for some $\varepsilon>0$. We have $D^-(\Gamma)=D^+(\Gamma)=2(1+\varepsilon)\alpha$ 
and by \cite[Theorem 2.8]{BL}, $\Gamma$ is a sampling  sequence for  
$\mathcal{F}_{(1+\varepsilon)\varphi}^{2}$.  If $\Lambda$ is an interpolating sequence for  $\mathcal{F}_{\varphi}^{p}$,  $p=2,\infty$, then by Lemma \ref{InterSamplRS},  comparing the densities between interpolating and sampling sequences, we obtain $D^+(\Lambda)\le 2\alpha$.


\section{Examples} 
\label{section9}

In this section we give explicit examples of $d_\rho$-separated sequences 
of critical density which are uniqueness sets for $\ahp$ but are 
neither sampling nor interpolating.

We will also show that it is not possible to switch from a sampling
sequence to an interpolating sequence by removing a point without one being
already complete interpolating.

For the first of these examples we will need a two-sided version of the function
from Lemma \ref{FonctionBL}, which we estimate using essentially the same argument.
As before, $\phi(z) = a(\log^+|z|)^2$, $a>0$.

\begin{lem}\label{lemmeBorLyu} Let $a>0$ and consider $\Gamma^{\pm}=\{ 
-e^{\frac{n}{a}} e^{i\theta_n},e^{\frac{n}{a}} e^{i\theta_n}\}_{n\geq 1}
=:\{\gamma_n\}_{n\in\Z\setminus \{0\}}$, 
where $\theta_n$ are arbitrary real numbers. Then the infinite product
$$
E(z)=\prod_{\gamma\in \Gamma^{\pm}}\Big(1-\frac{z}{\gamma}\Big)
$$
 converges on every compact subset of $\C$, 
\begin{equation}
 \label{estimSym}
 |E(z)| \asymp e^{\phi(z)}
 \frac{\dist(z,\Gamma^{\pm})}{1+|z|^{2}},\qquad z\in \C,
 \end{equation}
and
$$
 |E'(\gamma)|\asymp \frac{e^{\phi(\gamma)}} {|\gamma|^2},
 \quad \gamma\in\Gamma^{\pm}.
$$
\end{lem}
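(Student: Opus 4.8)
The plan is to mimic the proof of Lemma~\ref{FonctionBL} (i.e.\ \cite[Lemma 2.6]{BL}), adapting the one-sided argument to the symmetric situation. Fix $z$ with $|z|=e^t$, $t>0$ large, and let $N\in\NN$ be such that $e^{N/a}\le |z|< e^{(N+1)/a}$; also suppose without loss of generality (after the usual reductions) that $\dist(z,\Gamma^\pm)$ is attained at the nearest point $\gamma_k$ with $|\gamma_k|\asymp|z|$. The starting point is to split
$$
\log|E(z)| = \sum_{\gamma\in\Gamma^\pm}\log\Bigl|1-\frac{z}{\gamma}\Bigr|
= \underbrace{\sum_{|\gamma|\le |z|}\log\Bigl|1-\frac{z}{\gamma}\Bigr|}_{\text{far-out zeros}}
+\underbrace{\sum_{|\gamma|> |z|}\log\Bigl|1-\frac{z}{\gamma}\Bigr|}_{\text{far-in zeros}},
$$
isolating the (bounded number of) nearby factors. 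For the zeros with $|\gamma|$ much larger than $|z|$ one has $|1-z/\gamma|\asymp 1$, and summing $\log|1-z/\gamma| = O(|z|/|\gamma|)$ over both rays gives a contribution $O(1)$ (geometric series); for the zeros with $|\gamma|$ much smaller than $|z|$ one writes $\log|1-z/\gamma| = \log(|z|/|\gamma|) + O(|\gamma|/|z|)$, the error again summing to $O(1)$. The nearby factors (finitely many, with a precise count because the $e^{n/a}$ are lacunary) contribute $\log\bigl(\dist(z,\Gamma^\pm)/|z|\bigr)+O(1)$.

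The main term is therefore
$$
\sum_{\substack{\gamma\in\Gamma^\pm\\ |\gamma|\le|z|}}\log\frac{|z|}{|\gamma|}
= 2\sum_{n=1}^{N}\Bigl(t-\frac{n}{a}\Bigr) + O(1)
= 2Nt - \frac{N(N+1)}{a} + O(1),
$$
the factor $2$ accounting for the two rays. Substituting $N = at + O(1)$ (from $|z|=e^t$) this equals $at^2 + O(t) = \phi(z) + O(t)$; one must keep track of the linear-in-$t$ term carefully, exactly as in \eqref{estimLogF}, and it should come out to $\phi(z) - 2t + O(1)$, i.e.\ a factor $|z|^{-2}$ after exponentiating (this is the difference from Lemma~\ref{FonctionBL}, where the single ray produces $|z|^{-3/2}$). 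Combining with the nearby-factor contribution $\log\dist(z,\Gamma^\pm) - t + O(1)$ we get
$$
\log|E(z)| = \phi(z) + \log\dist(z,\Gamma^\pm) - 2\log|z| + O(1),
$$
which is exactly \eqref{estimSym}. For small $|z|$ (bounded region) the product is a nonvanishing analytic function bounded above and below away from the zeros, so the estimate persists with adjusted constants. The derivative estimate follows by the standard device: $E'(\gamma) = \lim_{z\to\gamma} E(z)/(z-\gamma)$, and since $\dist(z,\Gamma^\pm)\asymp|z-\gamma|$ for $z$ near $\gamma\in\Gamma^\pm$ (using separation of $\Gamma^\pm$), \eqref{estimSym} gives $|E'(\gamma)|\asymp e^{\phi(\gamma)}/|\gamma|^2$.

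The uniform convergence of the product on compacts is routine: on $|z|\le R$ only finitely many zeros lie in $|z|\le 2R$, and for the rest $\bigl|\log(1-z/\gamma)\bigr|\lesssim R/|\gamma|$, which is summable over the lacunary sequence $\{e^{n/a}\}$. I expect the only delicate point to be the bookkeeping of the $O(t)$ terms — making sure the linear term assembles precisely to $-2t$ and not some other multiple — together with the verification that all implied constants are genuinely independent of the phases $\theta_n$; the latter is automatic because every estimate above depends on the $\gamma$'s only through their moduli and through $\dist(z,\Gamma^\pm)$, $\dist(\gamma,\Gamma^\pm\setminus\{\gamma\})$, the last of which is controlled by the (phase-independent) separation of the radii $e^{n/a}$.
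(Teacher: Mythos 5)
Your argument is the same as the paper's: cut the product at modulus $\asymp|z|$, show the outer tail is $O(1)$, let the inner zeros contribute $\log(|z|/|\gamma|)$ up to a summable error, and treat the bounded number of nearby factors separately; the derivative estimate and convergence are handled exactly as in the paper. The final estimate you state is the correct one, so the proof is essentially right.

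The one point you flagged as delicate is indeed where your bookkeeping, as written, is internally inconsistent: the far sum does \emph{not} come out to $\phi(z)-2t$. With $N=at+O(1)$ one gets
$$2\sum_{n=1}^{N}\Bigl(t-\frac na\Bigr)=2Nt-\frac{N(N+1)}{a}=at^2-t+O(1),$$
i.e.\ the two-sided sum alone only produces a single $-t$ (versus $-t/2$ for the one-sided sequence of Lemma \ref{FonctionBL}); the second $-t$ comes from the nearest factor, whose contribution is $\log\bigl(\dist(z,\Gamma^{\pm})/|z|\bigr)=\log\dist(z,\Gamma^{\pm})-t+O(1)$. Adding these gives $\log|E(z)|=at^2-2t+\log\dist(z,\Gamma^{\pm})+O(1)$, which is \eqref{estimSym}. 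If instead the far sum really gave $\phi(z)-2t$ and you then added the nearby term $\log\dist(z,\Gamma^{\pm})-t$, you would land at $|z|^{-3}$, contradicting your own displayed conclusion. So the fix is just this two-line computation (it is precisely what the paper carries out); with it, your proof coincides with the paper's.
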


\begin{proof}
For every $t>0$ there exists a unique $m$ such that
\[
 \frac{1}{a}\left({m}-\frac{1}{2}\right)
 \le t <\frac{1}{a}\left({m}+\frac{1}{2}\right).
\]
We consider $z$ with $|z|=e^t$. 
Then
\begin{eqnarray*}
 \log|E(z)|
 &=&\sum_{1\le |k|\le m}\log \left|1-\frac{z}{\gamma_k}\right| + O(1)\\
 &=&\sum_{1\le |k|\le m-1}\Big(t-\frac{|k|}{a}\Big)
 +\log\dist (z,\Gamma)-t+O(1)\\
 &=&2(m-1)t-\frac{2}{a}\frac{(m-1)m}{2}+\log\dist (z,\Gamma^{\pm})-t+O(1)\\
& =&at^2-2t+\log\dist(z,\Gamma^{\pm})+O(1),
\end{eqnarray*}
which gives the first estimate.
The second estimate can be immediately deduced from the first.
\end{proof}

\begin{exam}
{\rm Let $\Gamma^{\pm}$ be the sequence from Lemma \ref{lemmeBorLyu}.
In view of Theorem \ref{BaseRiesz}, this sequence cannot be complete
interpolating for $\ahd$ since the points $\gamma_{n}$, $n<0$, are too far from
the reference sequence. Clearly, the upper and lower densities of this 
sequence are equal to $2\alpha$. Let $E$ be the function of Lemma \ref{lemmeBorLyu}. Hence the functions
$g_{n}:= E/ E'(\gamma_n)(\cdot-\gamma_n)$  are in $\ahc$ and $\ahd$.

Let us prove that it is neither sampling nor interpolating for $\ahd$.

Suppose first that it is sampling. Then for every finite sequence
$v=(v_n)_{n\in\Z\setminus\{0\}}\in \ell^2_{\varphi,\Gamma^{\pm}}$
the function $f_v=\sum_n v_ng_n$ would interpolate $v$ on $\Gamma^{\pm}$, and
by the sampling property of $\Gamma^{\pm}$,
\[
 \|f_v\|_{\varphi,2}^2
 \asymp \|f_v|_{\Gamma^{\pm}}\|_{\varphi,2,\Gamma^{\pm}}^2
 =\sum_n|v_n|^2e^{-2\varphi(\gamma_n)}(1+|\gamma_n|^2) 
 =\|v\|_{\varphi,2,\Gamma^{\pm}}.
\]
In other words, the interpolation operator $v\longmapsto f_v$ would be continuous
from $\ell^2_{\varphi,\Gamma}$ to $\ahd$, and the sequence would be 
interpolating. Since it was supposed sampling it would thus be complete interpolating, and we would get a contradiction. 

Suppose next that it is interpolating.
Since the function $E$ vanishes on $\Gamma^{\pm}$ and satisfies  \eqref{estimSym},  
$\Gamma^{\pm}$ is a uniqueness sequence (see, for example, \cite[Theorem 3]{DK}). 
Since it is also interpolating, it is complete interpolating, and again we 
obtain a contradiction. }
\end{exam}

\begin{exam}
{\rm Let $\Gamma_2:=\Gamma^{\pm}\cup \{-1,1\}=\{\pm e^{n/\alpha}\}_{n\geq 0}=
\{\gamma_n\}_{n\in \Z}$ where $\Gamma^{\pm}$ is again the sequence from
Lemma \ref{lemmeBorLyu} and $\gamma_{-n}=-\gamma_{n}$. 
Then $\Gamma_2$ is neither sampling nor interpolating for $\ahc$.
Clearly, by \eqref{estimSym}, $\Gamma_2$ is a uniqueness set for $\ahc$.

Set $E_2(z)=(z^2-1)E(z)$. Then,
in view of \eqref{estimSym},
$\Gamma_2$ is a uniqueness sequence for $\ahc$ and 
$g_{\gamma}=E_2/E_2'(\gamma)(\cdot-\gamma)$ with $\gamma\in \Gamma_2$  
defines a function in $\ahc$.
Next, let  the
family of functions $(f_n)_{n\geq 0}\in \ahc$ be given by 
\[
 f_n(z)=\sum_{|\gamma|\le \gamma_n}\eps_{\gamma}g_{\gamma}(z),\qquad z\in\C,
\] 
where 
$ \eps_{\gamma}:=e^{-i\arg g_{\gamma}(z_n)}e^{\varphi(\gamma)}$ and  $z_n=i\gamma_{n+1} $. We have 
\[
 f_n(z_n)=\sum_{|\gamma|\le\gamma_ n}\eps_{\gamma}g_{\gamma}(z_n)
 =\sum_{|\gamma|\le \gamma_n} e^{\varphi(\gamma)} |g_{\gamma}(z_n)|
 \asymp e^{\varphi(z_n)}\sum_{|\gamma|\le \gamma_n}\frac{\dist (z_n,\Gamma)}{|z_n-\gamma|}
 \asymp ne^{\varphi(z_n)}.
\]
As a consequence, $\|f_n\|_{\varphi,\infty}\to\infty$, while $\|f_n|_{\Gamma_2}\|_{
\varphi,\infty,\Gamma_2}=1$. Hence the sequence is not sampling, and since it is
uniqueness it can neither be interpolating (otherwise it would be complete interpolating
and hence sampling).  }
\end{exam}

Next we give an example of a one-sided sequence of critical density which is neither
sampling nor interpolating for $\ahd$. Let $0<\delta<\frac{1}{2a}$ and let 
$$
\Lambda=\{\lambda_n\}_{n\ge 0} = \{ e^{\frac{n+1}{2a} +\delta} \}_{n\ge 0}.
$$
Then, by estimates analogous to Lemma \ref{lemmeBorLyu}, the generating function $F$
of the sequence $\Lambda$ satisfies
\begin{equation}
 \label{bbb1}
 |F(z)| \asymp e^{\phi(z)}
 \frac{\dist(z, \Lambda)}{1+|z|^{3/2 + 2a\delta}}, \qquad z\in \C.
\end{equation}
It is clear that $F\in \ahd$ if and only if $\delta >\frac{1}{4a}$. 
Moreover, by Theorem \ref{BaseRiesz}, $\Lambda$ is a complete interpolating sequence
for $\ahd$ if $0< \delta < \frac{1}{4a}$ and $\Lambda \cup\{1\}$ is   
a complete interpolating sequence
if $\frac{1}{4a} < \delta < \frac{1}{2a}$.

\begin{exam}
{\rm  If $\delta  = \frac{1}{4a}$, then $\Lambda$
is neither sampling nor interpolating for $\ahd$. 
First, it follows from \eqref{bbb1} that $\Lambda$ is a uniqueness set for $\ahd$.
Indeed, if $SF \in \ahd$ for an entire function $S$, then $S$ should be constant, but $F\notin\ahd$
(a detailed proof can be found in \cite[Theorem 3]{DK}). Hence, $\Lambda$
is not an interpolating sequence (otherwise, it would be complete interpolating which is not true).

Assume now that $\Lambda$ is sampling for $\ahd$. Then, by the stability 
result of Corollary \ref{pertp=2}, the sequence
$$
\widetilde{\Lambda}= \{ e^{\frac{n+1}{2a} +
\frac{1}{4a} + \varepsilon} \}_{n\ge 0}
$$
also is sampling for sufficiently small $\varepsilon>0$. However, it follows from \eqref{bbb1}
that $\widetilde{\Lambda}$ is the zero set of some nontrivial function in $\ahd$,
a contradiction. } 
\end{exam}

As mentioned in the introduction we 
finish this section showing that it is not possible to switch from a sampling
sequence to an interpolating sequence by removing a point without one being
already complete interpolating.

\begin{prop}\label{intsamp}
Suppose $\Lambda$ is an interpolating sequence for $\ahc$ and 
$\Lambda\cup\{\lambda\}$ is not interpolating  for a $\lambda\notin\Lambda$. Then
$\Lambda$ 
is complete interpolating for $\ahc$.
\end{prop}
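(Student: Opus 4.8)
The plan is to reduce everything to showing that $\Lambda$ is a uniqueness set for $\ahc$. First I would recall, exactly as in the last lines of the proof of Proposition~\ref{gam-tilda}, that a sequence which is at the same time interpolating and a uniqueness set for $\ahc$ is automatically sampling, hence complete interpolating: the restriction operator $R\colon \ahc\to\ell^{\infty}_{\varphi,\Lambda}$, $Rf=f|_\Lambda$, is bounded since $|f(\mu)|e^{-\varphi(\mu)}\le\|f\|_{\varphi,\infty}$, it is onto $\ell^{\infty}_{\varphi,\Lambda}$ by the interpolation hypothesis, and it is injective by the uniqueness hypothesis, so the open mapping theorem gives that $R^{-1}$ is bounded, which is precisely the sampling inequality $\|f\|_{\varphi,\infty}\lesssim\|f\|_{\varphi,\infty,\Lambda}$. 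Thus it suffices to prove that $\Lambda$ is a uniqueness set for $\ahc$.

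To do this I would argue by contradiction: suppose there is $f\in\ahc$, $f\not\equiv0$, with $f|_\Lambda=0$. Let $k\ge0$ be the order of the zero of $f$ at the added point $\lambda$ and set $\tilde f(z)=f(z)/(z-\lambda)^k$. Then $\tilde f$ is entire, $\tilde f(\lambda)\ne0$, and $\tilde f|_\Lambda=0$ because $\lambda\notin\Lambda$; moreover $\tilde f\in\ahc$, since dividing by a polynomial preserves the weighted sup-boundedness (for $|z|\ge2|\lambda|+1$ one has $|z-\lambda|\ge1$, whence $|\tilde f(z)|e^{-\varphi(z)}\le\|f\|_{\varphi,\infty}$, and on the complementary compact disk $|\tilde f|e^{-\varphi}$ is bounded for trivial reasons). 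So $\tilde f$ is a nontrivial ``defect function'' of $\Lambda$ that does not vanish at $\lambda$.

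Finally I would use $\tilde f$ to correct interpolation data and conclude that $\Lambda\cup\{\lambda\}$ is interpolating for $\ahc$, which contradicts the hypothesis. Given $v=(v_\mu)_{\mu\in\Lambda\cup\{\lambda\}}$ with $\|v\|_{\varphi,\infty,\Lambda\cup\{\lambda\}}<\infty$, its restriction $(v_\mu)_{\mu\in\Lambda}$ belongs to $\ell^{\infty}_{\varphi,\Lambda}$, so by the interpolation property of $\Lambda$ there is $g_0\in\ahc$ with $g_0|_\Lambda=(v_\mu)_{\mu\in\Lambda}$; then
$$
g=g_0+\frac{v_\lambda-g_0(\lambda)}{\tilde f(\lambda)}\,\tilde f\in\ahc
$$
still satisfies $g|_\Lambda=(v_\mu)_{\mu\in\Lambda}$ (as $\tilde f$ vanishes on $\Lambda$) and now also $g(\lambda)=v_\lambda$, so $g|_{\Lambda\cup\{\lambda\}}=v$. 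Hence $\Lambda\cup\{\lambda\}$ is interpolating, a contradiction; therefore no such $f$ exists, $\Lambda$ is a uniqueness set, and so $\Lambda$ is complete interpolating for $\ahc$. The only mildly delicate step is producing the corrector $\tilde f$ with $\tilde f(\lambda)\ne0$ — one must divide out all the zeros the defect function may have at $\lambda$ while checking that membership in $\ahc$ survives the division — but the rest of the argument is entirely routine.
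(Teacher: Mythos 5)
Your proof is correct and takes essentially the same route as the paper: the paper also argues that if $\Lambda$ is not a uniqueness set, then a function $h\in\ahc$ vanishing on $\Lambda$ with $h(\lambda)=1$ allows one to correct interpolation data at $\lambda$, so $\Lambda\cup\{\lambda\}$ would be interpolating, while an interpolating uniqueness sequence is automatically sampling, hence complete interpolating. You simply make explicit two steps the paper leaves implicit --- the open-mapping argument for ``interpolating $+$ uniqueness $\Rightarrow$ sampling'' and the division by $(z-\lambda)^k$ needed to ensure the defect function does not vanish at $\lambda$ --- and both are handled correctly.
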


\begin{proof}
The assertion of the theorem is clearly true when $\Lambda$ is complete interpolating.

Suppose now that $\Lambda$ is an interpolating sequence which is not 
a uniqueness set.
Then there exists $h$ vanishing on $\Lambda$
and such that $h(\lambda)=1$ which implies  
that $\Lambda\cup\{\lambda\}$
is interpolating.\end{proof}


\begin{lem}
If $\Lambda$ is sampling for $\ahc$ and  $\Lambda\setminus\{\lambda\}$ is a zero set for $\ahc$, then $\Lambda\setminus\{\lambda\}$ is interpolating.
\end{lem}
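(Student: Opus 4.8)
The plan is to reduce the statement to the surjectivity of the restriction map and to prove that surjectivity by a weak-$*$ closed range argument; the point is that one has no a priori control on the size of a function vanishing on $\Lambda\setminus\{\lambda\}$, so a direct construction of interpolating functions is not available.

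Write $\Lambda'=\Lambda\setminus\{\lambda\}$. Since $\Lambda$ is sampling for $\ahc$ it is a uniqueness set (if $f\in\ahc$ vanishes on $\Lambda$ then $\|f\|_{\varphi,\infty}\le L\|f\|_{\varphi,\infty,\Lambda}=0$), and in particular $\Lambda'$ is infinite and tends to $\infty$. Fix $g\in\ahc\setminus\{0\}$ vanishing on $\Lambda'$; then $g(\lambda)\neq0$ (otherwise $g$ would vanish on all of $\Lambda$), and after normalization $g(\lambda)=1$. For $f\in\ahc$ the function $f-f(\lambda)g$ lies in $\ahc$, agrees with $f$ on $\Lambda'$, and vanishes at $\lambda$; applying the sampling inequality to it gives $\|f-f(\lambda)g\|_{\varphi,\infty}\le L\|f\|_{\varphi,\infty,\Lambda'}$, hence
\begin{equation}\label{starineq}
\|f\|_{\varphi,\infty}\le L\|f\|_{\varphi,\infty,\Lambda'}+|f(\lambda)|\,\|g\|_{\varphi,\infty},\qquad f\in\ahc .
\end{equation}
Let $T\colon\ahc\to\ell^{\infty}_{\varphi,\Lambda'}$ be the restriction $Tf=(f(\mu))_{\mu\in\Lambda'}$ (so $\|Tf\|=\|f\|_{\varphi,\infty,\Lambda'}$), and let $N=\{f\in\ahc:f(\lambda)=0\}$. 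By \eqref{starineq}, $T$ is bounded below on $N$, hence an isomorphism onto its range, which is therefore closed; since $\ahc=N\oplus\CC g$ and $Tg=0$ we get $T(\ahc)=T(N)=:M$, a closed subspace of $\ell^{\infty}_{\varphi,\Lambda'}$. Thus $\Lambda'$ is interpolating for $\ahc$ if and only if $M=\ell^{\infty}_{\varphi,\Lambda'}$.

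To prove $M=\ell^{\infty}_{\varphi,\Lambda'}$ I would use duality. Recall that $\ahc$ is a dual Banach space, its unit ball being compact for uniform convergence on compact sets, and that with respect to the corresponding predual all point evaluations are continuous (a standard fact for weighted sup-norm spaces of entire functions); consequently $N=\ker(\mathrm{ev}_\lambda)$ is weak-$*$ closed and $T$ is weak-$*$ continuous into $\ell^{\infty}_{\varphi,\Lambda'}=\big(\ell^{1}_{\varphi,\Lambda'}\big)^{*}$. First, $M$ is weak-$*$ closed: by the Krein--\v{S}mulian theorem it suffices to check that $M\cap rB$ is weak-$*$ closed for each $r>0$, where $B$ is the unit ball of $\ell^{\infty}_{\varphi,\Lambda'}$; if $c>0$ is the lower bound of $T|_N$, then $M\cap rB=T\big(\{f\in N:\|f\|_{\varphi,\infty}\le r/c\}\cap T^{-1}(rB)\big)$, which is the image under the weak-$*$ continuous map $T$ of the intersection of a Banach--Alaoglu ball of the weak-$*$ closed space $N$ with the weak-$*$ closed set $T^{-1}(rB)$, hence weak-$*$ compact, in particular weak-$*$ closed. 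Second, $M$ has trivial annihilator in $\ell^{1}_{\varphi,\Lambda'}$: if $v=(v_\mu)$ satisfies $\sum_\mu f(\mu)\overline{v_\mu}e^{-2\varphi(\mu)}=0$ for all $f\in\ahc$, then testing against $g_\nu:=g/(\,\cdot\,-\nu)^{m_\nu}\in\ahc$ — with $m_\nu$ the order of vanishing of $g$ at $\nu\in\Lambda'$, so that $g_\nu$ vanishes on $\Lambda'\setminus\{\nu\}$ and $g_\nu(\nu)\neq0$ — gives $v_\nu=0$ for every $\nu$. A weak-$*$ closed subspace with trivial annihilator is the whole space, so $M=\ell^{\infty}_{\varphi,\Lambda'}$ and $\Lambda'$ is interpolating.

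The main obstacle is precisely the weak-$*$ closedness of $M$: a norm-closed subspace of $\ell^\infty$ with trivial annihilator need not be everything (think of $c_0$), so one must genuinely use that the whole defect of $\Lambda'$ as a sampling sequence is concentrated at the single functional $f\mapsto f(\lambda)$ — which is what makes $T|_N$ bounded below and feeds the Krein--\v{S}mulian step. (The alternative of building interpolating functions out of the $g/(\,\cdot\,-\mu)$ fails because no growth estimate for $g$ is at hand.) I would also note as a byproduct that, since interpolating sequences are $d_\rho$-separated by Corollary \ref{lemma2x} and $\Lambda'$ is not a uniqueness set here, Proposition \ref{intsamp} applied to $\Lambda'$ shows that $\Lambda$ itself is complete interpolating for $\ahc$.
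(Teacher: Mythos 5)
Your argument is correct, but it takes a genuinely different and considerably heavier route than the paper's. The paper's proof is exactly the constructive one you dismiss: with $f\in\ahc$ vanishing on $\Lambda\setminus\{\lambda\}$ and $f(\lambda)=1$, it forms $g_\mu(z)=f(z)\frac{z-\lambda}{z-\mu}$, sets $f_v=\sum_\mu v_\mu g_\mu/g_\mu(\mu)$ for finite data $v$, and — since $f_v(\lambda)=0$ and $f_v=v$ on $\Lambda\setminus\{\lambda\}$ — reads off $\|f_v\|_{\varphi,\infty}\lesssim\|v\|_{\varphi,\infty,\Lambda\setminus\{\lambda\}}$ directly from the sampling inequality for $\Lambda$; no pointwise growth estimate for $f$ is needed, because only the values of $f_v$ on $\Lambda$ enter. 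So your remark that this route "fails because no growth estimate for $g$ is at hand" is a misjudgment: the sampling inequality applied to a function vanishing at $\lambda$ is precisely the missing estimate, and it is in fact the same mechanism as your inequality \eqref{starineq}. Your own proof then trades the explicit construction for soft functional analysis: \eqref{starineq} makes the restriction map bounded below on the hyperplane $N=\{f:f(\lambda)=0\}$, hence of closed range equal to $T(\ahc)$, and surjectivity follows from Krein--\v{S}mulian plus the annihilator computation with the functions $g/(\,\cdot\,-\nu)^{m_\nu}$. This is valid, but it imports machinery the paper never develops (the dual-space structure of $\ahc$ with weak-$*$ continuous point evaluations, weak-$*$ continuity of $T$, Krein--\v{S}mulian), all of which must be justified by the standard Dixmier--Ng/weighted-$H^\infty$ duality theory you cite; the paper's argument is elementary, yields an explicit linear interpolation operator with norm control, and is shorter. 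Both proofs hinge on the same observation — that the whole sampling defect of $\Lambda\setminus\{\lambda\}$ sits in the single functional $f\mapsto f(\lambda)$ — so the abstract detour does not buy additional generality here.
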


\begin{proof}
By assumption there exists a function $f\in\ahc$ vanishing on $\Lambda\setminus
\{\lambda\}$, and $f(\lambda)=1$. For $\mu\in\Lambda\setminus \{\lambda\}$
define the entire function
\[
 g_{\mu}(z)=
\left\{
\begin{array}{ll}
 f(z)\frac{\dst z-\lambda}{\dst z-\mu} &\text{if }z\neq \mu\\
 f'(\mu)(\mu-\lambda) &\text{if }z=\mu,
\end{array}
\right.
\]
which is in $\ahc$. Clearly $g_{\mu}$ vanishes on $\{\lambda\}\cup
\Lambda\setminus\{\mu\}$.

Pick now a finite sequence $(v_{\mu})_{\mu\neq\lambda}$ 
and set
\[
 f_v(z)=\sum_{\mu\neq \lambda}v_{\mu}\frac{g_{\mu}(z)}{g_{\mu}(\mu)}.
\]
By construction $f_v\in\ahc$ as a finite sum of functions in $\ahc$,
and $f_v$ interpolates $v_{\mu}$ in $\mu\neq \lambda$. Let
us estimate the norm of $f_v$. Observe that $f_v(\lambda)=0$ since
$g_{\mu}(\lambda)=0$ for every $\mu\neq \lambda$.
Using the fact that $\Lambda$ is sampling we have
\begin{eqnarray*}
  \|f_v\|_{\varphi,\infty}\asymp \sup_{\mu\in\Lambda}|f_v(\mu)|
 e^{-\varphi(\mu)}=\sup_{\mu\in\Lambda\setminus\{\lambda\}}|f_v(\mu)|
 e^{-\varphi(\mu)}
 =\sup_{\mu\in\Lambda\setminus\{\lambda\}}|v_{\mu}|
 e^{-\varphi(\mu)}=\|v\|_{\varphi,\infty,\Lambda\setminus\{\lambda\}}.
\end{eqnarray*}
Hence we can define a bounded interpolation operator $v\longmapsto f_v$
and $\Lambda\setminus\{\lambda\}$ is an interpolating sequence.
\end{proof}

\begin{cor}
If $\Lambda$ is sampling for $\ahc$ and  $\Lambda\setminus\{\lambda\}$ is a zero set for $\ahc$, then 
$\Lambda$ is complete interpolating.
\end{cor}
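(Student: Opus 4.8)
The plan is to derive this corollary with essentially no new computation, by combining the lemma just proved with Proposition~\ref{intsamp}. First I would apply the preceding lemma: since $\Lambda$ is sampling for $\ahc$ and $\Lambda\setminus\{\lambda\}$ is a zero set for $\ahc$, that lemma gives that the sequence $\Lambda' := \Lambda\setminus\{\lambda\}$ is interpolating for $\ahc$.

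Next I would record that $\Lambda'$ cannot be complete interpolating. Indeed, by the zero-set hypothesis there is a nonzero function in $\ahc$ vanishing on $\Lambda'$, so $\Lambda'$ is not a uniqueness set; but every sampling sequence — in particular every complete interpolating sequence — is a uniqueness set, because a function vanishing on a sampling set $S$ has $\|f\|_{\varphi,\infty,S}=0$ and hence, by the sampling inequality, vanishes identically. Thus $\Lambda'$ is interpolating but not complete interpolating.

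Finally I would invoke Proposition~\ref{intsamp} with $\Lambda'$ in the role of the interpolating sequence and $\lambda$ the point to be adjoined, noting that $\Lambda'\cup\{\lambda\}=\Lambda$: if $\Lambda$ were not interpolating, that proposition would force $\Lambda'$ to be complete interpolating, contradicting the previous step. Hence $\Lambda$ is interpolating, and since it is sampling by assumption, $\Lambda$ is complete interpolating. I do not expect a genuine obstacle here; the only point that must be made explicit is that ``complete interpolating'' entails ``uniqueness set'', which is exactly what makes the zero-set assumption on $\Lambda\setminus\{\lambda\}$ incompatible with the conclusion of Proposition~\ref{intsamp}. (If one prefers a self-contained argument avoiding Proposition~\ref{intsamp}, one can reason directly: given data $v$ on $\Lambda$, interpolate its restriction to $\Lambda'$ by some $F_0\in\ahc$ with $\|F_0\|_{\varphi,\infty}\lesssim\|v\|_{\varphi,\infty,\Lambda}$, and set $F=F_0+(v_\lambda-F_0(\lambda))f$, where $f\in\ahc$ is the fixed function vanishing on $\Lambda'$ with $f(\lambda)=1$; since $\lambda$ is a fixed point, the factor $e^{\varphi(\lambda)}$ entering the bound for $|v_\lambda-F_0(\lambda)|$ is a harmless constant, and one gets $\|F\|_{\varphi,\infty}\lesssim\|v\|_{\varphi,\infty,\Lambda}$, so $\Lambda$ is interpolating.)
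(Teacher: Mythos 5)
Your argument is correct and is essentially the paper's own proof: use the preceding lemma to get that $\Lambda\setminus\{\lambda\}$ is interpolating for $\ahc$, observe that it cannot be complete interpolating because, being a zero set, it is not a uniqueness set, and then Proposition~\ref{intsamp} (applied, as you do, in contrapositive form, or via the paper's two-case split) forces $\Lambda$ itself to be interpolating, hence complete interpolating since it is sampling. Your parenthetical direct construction of the interpolating function is a valid alternative but adds nothing beyond the shorter route through Proposition~\ref{intsamp}.
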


\begin{proof}
We already know from the preceding lemma that $\Lambda\setminus\{\lambda\}$ is interpolating.
Two cases may occur. Either the sequence $\Lambda$ is interpolating, in which case nothing has
to be proved, or $\Lambda$ is not interpolating. 
In the latter case, $\Lambda\setminus\{\lambda\}$ is interpolating and
$\Lambda$ is not, so that from Proposition \ref{intsamp} we conclude that
$\Lambda\setminus\{\lambda\}$ is complete interpolating which is impossible 
since $\Lambda\setminus\{\lambda\}$ is a zero set.
\end{proof}

\begin{cor}
If a sequence $\Lambda$ is interpolating for $\ahc$ and  $\Lambda\cup \{\lambda\}$ 
is sampling for $\ahc$, then either $\Lambda$ or $\Lambda \cup \{\lambda\}$ 
is complete interpolating.
\end{cor}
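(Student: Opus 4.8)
The plan is to deduce the statement from Proposition~\ref{intsamp} by a short case analysis, with essentially no computation involved. First I would treat the degenerate possibility $\lambda\in\Lambda$ separately: then $\Lambda\cup\{\lambda\}=\Lambda$, which is interpolating by the hypothesis on $\Lambda$ and sampling by the hypothesis on $\Lambda\cup\{\lambda\}$, hence complete interpolating, and the conclusion holds (with $\Lambda$ playing both roles). So from now on I would assume $\lambda\notin\Lambda$.

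Next I would distinguish two cases according to whether $\Lambda\cup\{\lambda\}$ is an interpolating sequence for $\ahc$. In the first case, $\Lambda\cup\{\lambda\}$ is interpolating and, by hypothesis, sampling; therefore it is complete interpolating and we are done. In the second case, $\Lambda\cup\{\lambda\}$ is \emph{not} interpolating while $\Lambda$ is interpolating; since moreover $\lambda\notin\Lambda$, Proposition~\ref{intsamp} applies verbatim and yields that $\Lambda$ itself is complete interpolating. Thus in every case at least one of the two sequences $\Lambda$, $\Lambda\cup\{\lambda\}$ is complete interpolating, which is precisely the assertion.

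All the substance is supplied by Proposition~\ref{intsamp}, so there is no genuine obstacle at the level of this corollary; any difficulty is already absorbed into that proposition, whose proof in turn rests only on the observation that an interpolating sequence which fails to be a uniqueness set can absorb one further point and remain interpolating.
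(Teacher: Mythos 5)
Your argument is correct and is exactly the deduction the paper intends: the corollary is stated without proof as an immediate consequence of Proposition~\ref{intsamp}, and your case split (if $\Lambda\cup\{\lambda\}$ is interpolating it is complete interpolating by hypothesis; otherwise Proposition~\ref{intsamp} makes $\Lambda$ complete interpolating) is the natural way to obtain it. Handling the degenerate case $\lambda\in\Lambda$ separately is a harmless extra precaution, so nothing is missing.
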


The reader might have noticed that these last results work in a quite general setting.
In particular, if there are no complete interpolating sequences in such a general space,
then it is not possible to switch from an interpolating sequence to a sampling 
sequence by adding a sole point.

\end{document}